\documentclass{amsart}
\title{Categorifying the algebra of indexing systems}

\author{Jonathan Rubin}
\address{University of California Los Angeles,
Los Angeles, CA 90095}
\email{jrubin@math.ucla.edu}

\subjclass[2010]{Primary: 55P91}

\date{\today}

\renewcommand{\c}[1]{\mathcal{#1}}
\renewcommand{\t}[1]{\textnormal{#1}}
\renewcommand{\b}[1]{\mathbf{#1}}
\newcommand{\bb}[1]{\mathbb{#1}}
\newcommand{\vp}{\varphi}
\newcommand{\la}{\langle}
\newcommand{\ra}{\rangle}
\newcommand{\ul}[1]{\underline{#1}}
\newcommand{\ol}[1]{\overline{#1}}

\newcommand{\s}[1]{\mathscr{#1}}

\usepackage{amsmath,amssymb,amsthm,stackrel,tikz,enumitem,mathrsfs,stmaryrd,physics}


\theoremstyle{plain}
\newtheorem{lem}{Lemma}[section]
\newtheorem{thm}[lem]{Theorem}
\newtheorem*{thm*}{Theorem}
\newtheorem{cor}[lem]{Corollary}
\newtheorem{prop}[lem]{Proposition}
\newtheorem{exlem}[lem]{Example-Lemma}

\theoremstyle{remark}
\newtheorem{rem}[lem]{Remark}

\theoremstyle{definition}
\newtheorem{ex}[lem]{Example}

\newtheorem{conj}[lem]{Conjecture}
\newtheorem{defn}[lem]{Definition}


\theoremstyle{plain}
\newtheorem*{thmA}{Theorem A}
\newtheorem*{thmB}{Theorem B}

\setcounter{tocdepth}{1}


\newcommand{\cppa}{
	\begin{tikzpicture}[scale=0.15,baseline=0.2mm]
		\node(0) at (0,0) {$\cdot$};
		\node(1) at (0,1) {$\cdot$};
		\node(2) at (0,2) {$\cdot$};
		
	\end{tikzpicture}
}

\newcommand{\cppb}{
	\begin{tikzpicture}[scale=0.15,baseline=0.2mm]
		\node(0) at (0,0) {$\cdot$};
		\node(1) at (0,1) {$\cdot$};
		\node(2) at (0,2) {$\cdot$};
		
		\draw (0,0) -- (0,1);
	\end{tikzpicture}
}

\newcommand{\cppc}{
	\begin{tikzpicture}[scale=0.15,baseline=0.2mm]
		\node(0) at (0,0) {$\cdot$};
		\node(1) at (0,1) {$\cdot$};
		\node(2) at (0,2) {$\cdot$};
		
		\draw (0,0) -- (0,1);
		\draw plot [smooth, tension=1.5] coordinates {(0,0) (0.5,1) (0,2)};
	\end{tikzpicture}
}

\newcommand{\cppd}{
	\begin{tikzpicture}[scale=0.15,baseline=0.2mm]
		\node(0) at (0,0) {$\cdot$};
		\node(1) at (0,1) {$\cdot$};
		\node(2) at (0,2) {$\cdot$};
		
		\draw (0,1) -- (0,2);
	\end{tikzpicture}
}

\newcommand{\cppe}{
	\begin{tikzpicture}[scale=0.15,baseline=0.2mm]
		\node(0) at (0,0) {$\cdot$};
		\node(1) at (0,1) {$\cdot$};
		\node(2) at (0,2) {$\cdot$};
		
		\draw (0,0) -- (0,1);
		\draw plot [smooth, tension=1.5] coordinates {(0,0) (0.5,1) (0,2)};
		\draw (0,1) -- (0,2);
	\end{tikzpicture}
}


\newcommand{\cpppa}{
	\begin{tikzpicture}[scale=0.15,baseline=0.2mm]
		\node(0) at (0,0) {$\cdot$};
		\node(1) at (0,1) {$\cdot$};
		\node(2) at (0,2) {$\cdot$};
		\node(3) at (0,3) {$\cdot$};
		
	\end{tikzpicture}
}

\newcommand{\cpppb}{
	\begin{tikzpicture}[scale=0.15,baseline=0.2mm]
		\node(0) at (0,0) {$\cdot$};
		\node(1) at (0,1) {$\cdot$};
		\node(2) at (0,2) {$\cdot$};
		\node(3) at (0,3) {$\cdot$};
		
		\draw (0,0) -- (0,1);
	\end{tikzpicture}
}

\newcommand{\cpppc}{
	\begin{tikzpicture}[scale=0.15,baseline=0.2mm]
		\node(0) at (0,0) {$\cdot$};
		\node(1) at (0,1) {$\cdot$};
		\node(2) at (0,2) {$\cdot$};
		\node(3) at (0,3) {$\cdot$};
		
		\draw (0,0) -- (0,1);
		\draw plot [smooth, tension=1.5] coordinates {(0,0) (0.5,1) (0,2)};
	\end{tikzpicture}
}

\newcommand{\cpppf}{
	\begin{tikzpicture}[scale=0.15,baseline=0.2mm]
		\node(0) at (0,0) {$\cdot$};
		\node(1) at (0,1) {$\cdot$};
		\node(2) at (0,2) {$\cdot$};
		\node(3) at (0,3) {$\cdot$};
		
		\draw (0,0) -- (0,1);
		\draw plot [smooth, tension=1.5] coordinates {(0,0) (0.5,1) (0,2)};
		\draw plot [smooth, tension=1.5] coordinates {(0,0) (1,1.5) (0,3)};
	\end{tikzpicture}
}

\newcommand{\cpppg}{
	\begin{tikzpicture}[scale=0.15,baseline=0.2mm]
		\node(0) at (0,0) {$\cdot$};
		\node(1) at (0,1) {$\cdot$};
		\node(2) at (0,2) {$\cdot$};
		\node(3) at (0,3) {$\cdot$};
		
		\draw (0,0) -- (0,1);
		\draw plot [smooth, tension=1.5] coordinates {(0,0) (0.5,1) (0,2)};
		\draw (0,1) -- (0,2);
		\draw plot [smooth, tension=1.5] coordinates {(0,0) (1,1.5) (0,3)};
	\end{tikzpicture}
}

\newcommand{\cppph}{
	\begin{tikzpicture}[scale=0.15,baseline=0.2mm]
		\node(0) at (0,0) {$\cdot$};
		\node(1) at (0,1) {$\cdot$};
		\node(2) at (0,2) {$\cdot$};
		\node(3) at (0,3) {$\cdot$};
		
		\draw (0,1) -- (0,2);
		\draw plot [smooth, tension=1.5] coordinates {(0,1) (-0.5,2) (0,3)};
	\end{tikzpicture}
}

\newcommand{\cpppi}{
	\begin{tikzpicture}[scale=0.15,baseline=0.2mm]
		\node(0) at (0,0) {$\cdot$};
		\node(1) at (0,1) {$\cdot$};
		\node(2) at (0,2) {$\cdot$};
		\node(3) at (0,3) {$\cdot$};
		
		\draw (0,0) -- (0,1);
		\draw plot [smooth, tension=1.5] coordinates {(0,0) (0.5,1) (0,2)};
		\draw (0,1) -- (0,2);
		\draw plot [smooth, tension=1.5] coordinates {(0,0) (1,1.5) (0,3)};
		\draw plot [smooth, tension=1.5] coordinates {(0,1) (-0.5,2) (0,3)};
	\end{tikzpicture}
}

\newcommand{\cpppl}{
	\begin{tikzpicture}[scale=0.15,baseline=0.2mm]
		\node(0) at (0,0) {$\cdot$};
		\node(1) at (0,1) {$\cdot$};
		\node(2) at (0,2) {$\cdot$};
		\node(3) at (0,3) {$\cdot$};
		
		\draw (0,0) -- (0,1);
		\draw (0,2) -- (0,3);
	\end{tikzpicture}
}

\newcommand{\cpppm}{
	\begin{tikzpicture}[scale=0.15,baseline=0.2mm]
		\node(0) at (0,0) {$\cdot$};
		\node(1) at (0,1) {$\cdot$};
		\node(2) at (0,2) {$\cdot$};
		\node(3) at (0,3) {$\cdot$};
		
		\draw (0,0) -- (0,1);
		\draw plot [smooth, tension=1.5] coordinates {(0,0) (0.5,1) (0,2)};
		\draw plot [smooth, tension=1.5] coordinates {(0,0) (1,1.5) (0,3)};
		\draw (0,2) -- (0,3);
	\end{tikzpicture}
}

\newcommand{\cpppn}{
	\begin{tikzpicture}[scale=0.15,baseline=0.2mm]
		\node(0) at (0,0) {$\cdot$};
		\node(1) at (0,1) {$\cdot$};
		\node(2) at (0,2) {$\cdot$};
		\node(3) at (0,3) {$\cdot$};
		
		\draw (0,0) -- (0,1);
		\draw plot [smooth, tension=1.5] coordinates {(0,0) (0.5,1) (0,2)};
		\draw (0,1) -- (0,2);
		\draw plot [smooth, tension=1.5] coordinates {(0,0) (1,1.5) (0,3)};
		\draw plot [smooth, tension=1.5] coordinates {(0,1) (-0.5,2) (0,3)};
		\draw (0,2) -- (0,3);
	\end{tikzpicture}
}


\newcommand{\kindt}{
	\begin{tikzpicture}[scale=0.12,baseline=0.2mm]
		\node(S) at (0,0) {$\cdot$};
		\node(W) at (-1.5,1) {$\cdot$};
		\node(C) at (0,1) {$\cdot$};
		\node(E) at (1.5,1) {$\cdot$};
		\node(N) at (0,2) {$\cdot$};
	\end{tikzpicture}
}

\newcommand{\kindb}{
	\begin{tikzpicture}[scale=0.12,baseline=0.2mm]
		\node(S) at (0,0) {$\cdot$};
		\node(W) at (-1.5,1) {$\cdot$};
		\node(C) at (0,1) {$\cdot$};
		\node(E) at (1.5,1) {$\cdot$};
		\node(N) at (0,2) {$\cdot$};
		
		\draw (0,0) -- (0,1) ;
	\end{tikzpicture}
}

\newcommand{\kindab}{
	\begin{tikzpicture}[scale=0.12,baseline=0.2mm]
		\node(S) at (0,0) {$\cdot$};
		\node(W) at (-1.5,1) {$\cdot$};
		\node(C) at (0,1) {$\cdot$};
		\node(E) at (1.5,1) {$\cdot$};
		\node(N) at (0,2) {$\cdot$};
		
		\draw (0,0) -- (-1.5,1) ;
		\draw (0,0) -- (0,1) ;
	\end{tikzpicture}
}

\newcommand{\kindabc}{
	\begin{tikzpicture}[scale=0.12,baseline=0.2mm]
		\node(S) at (0,0) {$\cdot$};
		\node(W) at (-1.5,1) {$\cdot$};
		\node(C) at (0,1) {$\cdot$};
		\node(E) at (1.5,1) {$\cdot$};
		\node(N) at (0,2) {$\cdot$};
		
		\draw (0,0) -- (-1.5,1) ;
		\draw (0,0) -- (0,1) ;
		\draw (0,0) -- (1.5,1) ;
	\end{tikzpicture}
}

\newcommand{\kindkb}{
	\begin{tikzpicture}[scale=0.12,baseline=0.2mm]
		\node(S) at (0,0) {$\cdot$};
		\node(W) at (-1.5,1) {$\cdot$};
		\node(C) at (0,1) {$\cdot$};
		\node(E) at (1.5,1) {$\cdot$};
		\node(N) at (0,2) {$\cdot$};
		
		\draw (0,0) -- (-1.5,1) ;
		\draw (0,0) -- (1.5,1) ;
		\draw (0,1) -- (0,2) ;
	\end{tikzpicture}
}

\newcommand{\kindkc}{
	\begin{tikzpicture}[scale=0.12,baseline=0.2mm]
		\node(S) at (0,0) {$\cdot$};
		\node(W) at (-1.5,1) {$\cdot$};
		\node(C) at (0,1) {$\cdot$};
		\node(E) at (1.5,1) {$\cdot$};
		\node(N) at (0,2) {$\cdot$};
		
		\draw (0,0) -- (-1.5,1) ;
		\draw (0,0) -- (0,1) ;
		\draw (1.5,1) -- (0,2) ;
	\end{tikzpicture}
}

\newcommand{\kindkt}{
	\begin{tikzpicture}[scale=0.12,baseline=0.2mm]
		\node(S) at (0,0) {$\cdot$};
		\node(W) at (-1.5,1) {$\cdot$};
		\node(C) at (0,1) {$\cdot$};
		\node(E) at (1.5,1) {$\cdot$};
		\node(N) at (0,2) {$\cdot$};
		
		\draw (0,0) -- (-1.5,1) ;
		\draw (0,0) -- (0,1) ;
		\draw (0,0) -- (1.5,1) ;
		\draw plot [smooth, tension=1.5] coordinates {(0,0) (-0.5,1) (0,2)};
	\end{tikzpicture}
}

\newcommand{\kindkat}{
	\begin{tikzpicture}[scale=0.12,baseline=0.2mm]
		\node(S) at (0,0) {$\cdot$};
		\node(W) at (-1.5,1) {$\cdot$};
		\node(C) at (0,1) {$\cdot$};
		\node(E) at (1.5,1) {$\cdot$};
		\node(N) at (0,2) {$\cdot$};
		
		\draw (0,0) -- (-1.5,1) ;
		\draw (0,0) -- (0,1) ;
		\draw (0,0) -- (1.5,1) ;
		\draw (-1.5,1) -- (0,2) ;
		\draw plot [smooth, tension=1.5] coordinates {(0,0) (-0.5,1) (0,2)};	
	\end{tikzpicture}
}

\newcommand{\kindkbt}{
	\begin{tikzpicture}[scale=0.12,baseline=0.2mm]
		\node(S) at (0,0) {$\cdot$};
		\node(W) at (-1.5,1) {$\cdot$};
		\node(C) at (0,1) {$\cdot$};
		\node(E) at (1.5,1) {$\cdot$};
		\node(N) at (0,2) {$\cdot$};
		
		\draw (0,0) -- (-1.5,1) ;
		\draw (0,0) -- (0,1) ;
		\draw (0,0) -- (1.5,1) ;
		\draw (0,1) -- (0,2) ;
		\draw plot [smooth, tension=1.5] coordinates {(0,0) (-0.5,1) (0,2)};
	\end{tikzpicture}
}

\newcommand{\kindkct}{
	\begin{tikzpicture}[scale=0.12,baseline=0.2mm]
		\node(S) at (0,0) {$\cdot$};
		\node(W) at (-1.5,1) {$\cdot$};
		\node(C) at (0,1) {$\cdot$};
		\node(E) at (1.5,1) {$\cdot$};
		\node(N) at (0,2) {$\cdot$};
		
		\draw (0,0) -- (-1.5,1) ;
		\draw (0,0) -- (0,1) ;
		\draw (0,0) -- (1.5,1) ;
		\draw (1.5,1) -- (0,2) ;
		\draw plot [smooth, tension=1.5] coordinates {(0,0) (-0.5,1) (0,2)};
	\end{tikzpicture}
}

\newcommand{\kindkac}{
	\begin{tikzpicture}[scale=0.12,baseline=0.2mm]
		\node(S) at (0,0) {$\cdot$};
		\node(W) at (-1.5,1) {$\cdot$};
		\node(C) at (0,1) {$\cdot$};
		\node(E) at (1.5,1) {$\cdot$};
		\node(N) at (0,2) {$\cdot$};
		
		\draw (0,0) -- (-1.5,1) ;
		\draw (0,0) -- (0,1) ;
		\draw (0,0) -- (1.5,1) ;
		\draw (-1.5,1) -- (0,2) ;
		\draw (1.5,1) -- (0,2) ;
		\draw plot [smooth, tension=1.5] coordinates {(0,0) (-0.5,1) (0,2)};
	\end{tikzpicture}
}


\newcommand{\dsia}{
	\begin{tikzpicture}[scale=0.2,baseline=0.2mm]
		\node(S) at (0,0) {$\cdot$};
		\node(W) at (-1.5,1) {$\cdot$};
		\node(C) at (0,1) {$\cdot$};
		\node(E) at (1.5,1) {$\cdot$};
		\node(N) at (0,2) {$\cdot$};
		\node(3) at (0.6,1.2) {$\cdot$};
		
%
%
	\end{tikzpicture}
}

\newcommand{\dsib}{
	\begin{tikzpicture}[scale=0.2,baseline=0.2mm]
		\node(S) at (0,0) {$\cdot$};
		\node(W) at (-1.5,1) {$\cdot$};
		\node(C) at (0,1) {$\cdot$};
		\node(E) at (1.5,1) {$\cdot$};
		\node(N) at (0,2) {$\cdot$};
		\node(3) at (0.6,1.2) {$\cdot$};
		
		\draw (0,0) -- (-1.5,1) ;
		\draw (0,0) -- (0,1) ;
		\draw (0,0) -- (1.5,1) ;
		
%
	\end{tikzpicture}
}

\newcommand{\dsic}{
	\begin{tikzpicture}[scale=0.2,baseline=0.2mm]
		\node(S) at (0,0) {$\cdot$};
		\node(W) at (-1.5,1) {$\cdot$};
		\node(C) at (0,1) {$\cdot$};
		\node(E) at (1.5,1) {$\cdot$};
		\node(N) at (0,2) {$\cdot$};
		\node(3) at (0.6,1.2) {$\cdot$};
		
		
		\draw (0,0) -- (0.6,1.2) ;
		
	\end{tikzpicture}
}

\newcommand{\dsii}{
	\begin{tikzpicture}[scale=0.2,baseline=0.2mm]
		\node(S) at (0,0) {$\cdot$};
		\node(W) at (-1.5,1) {$\cdot$};
		\node(C) at (0,1) {$\cdot$};
		\node(E) at (1.5,1) {$\cdot$};
		\node(N) at (0,2) {$\cdot$};
		\node(3) at (0.6,1.2) {$\cdot$};
		
		\draw (0,0) -- (-1.5,1) ;
		\draw (0,0) -- (0,1) ;
		\draw (0,0) -- (1.5,1) ;
		
		\draw (0,0) -- (0.6,1.2) ;
		
		\draw plot [smooth, tension=1.5] coordinates {(0,0) (-0.5,1) (0,2)};
		\draw (-1.5,1) -- (0,2) ;
		\draw (0,1) -- (0,2) ;
		\draw (1.5,1) -- (0,2) ;
		\draw (0.6,1.2) -- (0,2) ;
	\end{tikzpicture}
}

\begin{document}
\maketitle

\begin{abstract} The homotopy category of $N_\infty$ operads is equivalent to a finite lattice, and as the ambient group varies, there are various image constructions between these lattices. In this paper, we explain how to lift this algebraic structure back to the operad level. We show that lattice joins and meets correspond to derived operadic coproducts and products, and we show that the image constructions correspond to derived operadic induction, restriction, and coinduction, at least when taken along an injective homomorphism.

We also prove that a derived variant of the Boardman-Vogt tensor product lifts the join. Our result does not resolve Blumberg and Hill's conjecture on the usual tensor product, but it does imply that every $N_\infty$ ring spectrum can be replaced with an equivalent spectrum, which is equipped with a self-interchanging operad action.
\end{abstract}

\tableofcontents

\section{Introduction}\label{sec:intro}

Transfer and norm maps are defining features of equivariant stable homotopy theory. From a classical standpoint, they arise geometrically, but in more modern terms, they arise from actions of $N_\infty$ operads on spaces and spectra. Broadly speaking, such operads represent equivariant enhancements of homotopy commutative monoid structures. They include Steiner and linear isometries operads, which parametrize additive and multiplicative structures on spectra over incomplete universes, but they are strictly more general. Nevertheless, $N_\infty$ algebras are quite natural from an algebraic standpoint. Localizations of equivariant commutative ring spectra are generally $N_\infty$ algebras (cf. \cite{HH} and \cite{White}), and recent work of Blumberg and Hill \cite{BHOspec} shows how to build various incomplete equivariant stable categories from various categories of $N_\infty$ spaces. 

The study of $N_\infty$ operads and algebras was initiated in \cite{BH}. In this paper, Blumberg and Hill laid much of the foundations for the subject, and they also made two conjectures. The first \cite[p. 4 and Conjecture 5.11]{BH} concerned the classification of $N_\infty$ operads. Over the course of their analysis, Blumberg and Hill proved that the homotopy category $\t{Ho}(N_\infty\t{-}\b{Op}^G)$ of $N_\infty$ $G$-operads embeds fully and faithfully into a combinatorially-defined lattice $\b{Ind}(G)$ of $G$-indexing systems. They conjectured that this embedding was an equivalence, and this was subsequently proven in \cite{BonPer}, \cite{GutWhite}, and \cite{RubComb}.

The second conjecture \cite[Conjecture 6.27]{BH} concerned the lattice structure of $\b{Ind}(G)$. It is straightforward to show that products of $N_\infty$ operads correspond to meets of indexing systems under the equivalence $\t{Ho}(N_\infty\t{-}\b{Op}^G) \simeq \b{Ind}(G)$. In analogy to the Dunn additivity theorem \cite{Dunn}, Blumberg and Hill conjectured that Boardman-Vogt tensor products of suitably cofibrant $N_\infty$ operads correspond to joins. This remains an open problem.

Our present paper grew out of attempts to resolve the second conjecture, and also to understand how other algebraic operations on the level indexing systems translate into topological constructions on the level of $N_\infty$ operads. We have in mind the lattice structure on $\b{Ind}(G)$ for individual finite groups $G$, and also the analogues to induction, restriction, and coinduction as $G$ varies. Part of this work was already done in \cite{BH}. As mentioned earlier, Blumberg and Hill showed that products of $N_\infty$ operads correspond to lattice meets, and they also identified the indexing system associated to a coinduced $N_\infty$ operad \cite[\S6.2]{BH}.

The dual problems are trickier. It is not obvious what a coproduct of $N_\infty$ operads or an induced $N_\infty$ operad even should be, because the usual operadic constructions do not have the right homotopical properties. One could imagine modifying the standard topological constructions, but we take a different approach. As explained in \cite{RubComb}, the homotopy theory of $N_\infty$ operads can be modeled using discrete operads, and it is easy to make sense of coproducts and induction in that setting. Thus, we analyze how the algebra of indexing systems lifts to combinatorial operads, and then we translate things into topology at the end (cf. \S\ref{subsec:topprodcopten} and \S\ref{subsec:topinterpindrescoind}).

In summary, we prove that joins in $\b{Ind}(G)$ lift to derived operadic coproducts and tensor products, and we define purely algebraic versions of induction, restriction, and coinduction that correspond to derived operadic induction, restriction, and coinduction. Our constructions are naturally performed on the level of discrete operads, but it is straightforward to read off the corresponding topological constructions from them. Ordinary products, restrictions, and coinductions of $N_\infty$ operads are already homotopically correct, but derived $N_\infty$ coproducts and inductions are quite different from the standard constructions.

We now state our results more precisely. The simplest discrete models for $N_\infty$ operads are operads in $G$-sets, which are $\Sigma$-free and have $G$-fixed operations of all arities. We call these objects $N$ operads, and we write $A(\s{O})$ for the indexing system associated to any such operad $\s{O}$. The following result combines \cite[Proposition 5.1]{BH} and Theorems \ref{thm:Ncoprjoin} and \ref{thm:NopBV}.

\begin{thmA} Suppose $\s{O}$ and $\s{P}$ are $N$ operads. Then:
	\begin{enumerate}
		\item{}the product $\s{O} \times \s{P}$ is a $N$ operad and $A(\s{O} \times \s{P}) = A(\s{O}) \land A(\s{P})$,
		\item{}the coproduct $\s{O} * \s{P}$ is a $N$ operad and $A(\s{O} * \s{P}) = A(\s{O}) \lor A(\s{P})$, and
		\item{}if $\s{O}$ and $\s{P}$ are both retracts of free operads, then $\s{O} \otimes \s{P}$ is a $N$ operad and $A(\s{O} \otimes \s{P}) = A(\s{O}) \lor A(\s{P})$.
	\end{enumerate}
\end{thmA}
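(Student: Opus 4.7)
Part (1) is \cite[Proposition 5.1]{BH}, so the task is to prove (2) and (3). In both cases the strategy has two steps: first verify that the construction produces an $N$ operad (that is, it is $\Sigma$-free with a $G$-fixed operation in every arity), and second identify its indexing system with $A(\s{O}) \lor A(\s{P})$.

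For (2), the inequality $A(\s{O}) \lor A(\s{P}) \leq A(\s{O} * \s{P})$ is immediate: the coproduct injections $\s{O} \to \s{O} * \s{P}$ and $\s{P} \to \s{O} * \s{P}$ force $A(\s{O}), A(\s{P}) \leq A(\s{O} * \s{P})$, and hence the inequality on joins. For the reverse inclusion, the plan is to use the standard presentation of the operadic coproduct as a free operad on the disjoint union of underlying symmetric sequences, modulo the operad relations within each factor. Every element of $(\s{O} * \s{P})(n)$ is then represented by a rooted $n$-tree whose internal vertices are labeled by elements of $\s{O}$ or $\s{P}$, reduced in the sense that no parent and child share a color. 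One argues by induction on the number of internal vertices that the stabilizer of any such labeled tree yields an admissible set in $A(\s{O}) \lor A(\s{P})$: the base case (a single vertex) realizes an operation of $\s{O}$ or $\s{P}$, and the inductive step uses closure under substitution, one of the defining axioms of indexing systems.

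For (3), the hypothesis that $\s{O}$ and $\s{P}$ are retracts of free operads is essential: it guarantees that the tensor product stays $\Sigma$-free and retains $G$-fixed operations in every arity, both of which can fail in general. The plan is to first reduce to the case of $\s{O}$ and $\s{P}$ free on $G$-sets, using that both the property of being an $N$ operad and the inequality $A(-) \leq A(\s{O}) \lor A(\s{P})$ pass through retracts. In the free case, elements of $\s{O} \otimes \s{P}$ are modeled by bicolored trees modulo the Boardman-Vogt interchange relations, and the admissible $H$-set attached to such a tree can be assembled out of the vertex $H$-sets by substitutions and restrictions. Each of these moves is permitted inside any indexing system, so the resulting $H$-set lies in $A(\s{O}) \lor A(\s{P})$.

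The main obstacle will be (3), specifically the behavior of the interchange relations. The stabilizer of an equivalence class of bicolored trees may be strictly larger than the stabilizer of any single representative, and one must account for these extra symmetries. A careful normal form for operations in the free tensor product, together with induction on the arity or total vertex count, should suffice; identifying and verifying the combinatorial invariants that survive interchange is where the real work lies.
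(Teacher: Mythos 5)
Your proposal follows the same high-level strategy as the paper. For (2) you prove one inclusion via the coproduct injections and for the other you reduce to a tree model of $\s{O} * \s{P}$; the paper's version of this is cleaner: it shows (Example-Lemma~\ref{exlem:Nopcop}) that $U(\s{O} * \s{P})$ embeds as a sub-symmetric sequence of $UF(U\s{O} \sqcup U\s{P})$, so $\Sigma$-freeness and the bound $A(\s{O} * \s{P}) \subset A(F(U\s{O} \sqcup U\s{P})) = A(\s{O}) \lor A(\s{P})$ both follow at once from the already-known computation of $A(F(S))$ for a free operad, rather than from a fresh induction over labeled trees. For (3) you reduce to free operads via retracts exactly as the paper does, and you correctly identify the central obstacle: that interchange relations might enlarge stabilizers, so the $\Sigma$-freeness and the claimed indexing system are not obvious from a single tree representative. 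But you leave this unresolved, merely flagging it as where the real work lies. This is precisely the content of Lemma~\ref{lem:freeopten}, which shows $F(S) \otimes F(T)$ is again a sub-symmetric sequence of $F(S \sqcup T)$ by solving the word problem (via the confluence criteria of Propositions~\ref{prop:idquot} and~\ref{prop:idquotcrit}); that appendix argument is what actually supplies the $\Sigma$-freeness and the upper bound on the admissible sets. Two minor imprecisions: free operads here are free on $\Sigma$-free symmetric sequences (collections of $G \times \Sigma_n$-sets), not ``free on $G$-sets''; and in the reduction to the free case one enlarges $S$, if necessary, so that $S(n)^G \neq \varnothing$ for all $n$, which is needed so $F(S)$ is actually an $N$ operad.
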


Part (3) is a precise combinatorial analogue to \cite[Conjecture 6.27]{BH}, but it does not imply the topological result. The $N_\infty$ operad associated to $\s{O}$ is obtained by attaching cells to make all fixed-point subspaces of $\s{O}$ contractible, and this construction does not preserve colimits. Thus, we can only deduce part of the conjecture from the combinatorial result (cf. Proposition \ref{prop:combtentop}). However, we do conclude that every $N_\infty$ ring spectrum can be equipped with a self-interchanging operad action, at least up to equivalence  (Theorem \ref{thm:Ninftyinterchange}).

The results for induction, restriction, and coinduction require a bit more setup, because $N$ operads do not induce up to $N$ operads. Instead, we use marked $G$-operads, by which we mean operads $\s{O}$ in $G$-sets, equipped with a chosen unit $u \in \s{O}(0)^G$ and product $p \in \s{O}(2)^G$. The $N_\infty$ operad corresponding to a marked operad $\s{O}$ is obtained by taking a $\Sigma$-free, free resolution of $\s{O}$, and then attaching cells just as before. For any homomorphism $f : G \to G'$ between finite groups, pulling back and Kan extending along $f$ defines change-of-group adjunctions $\t{ind}_f \dashv \t{res}_f \dashv \t{coind}_f$ for marked operads. The adjunction $\t{res}_f \dashv \t{coind}_f$ always derives, and the adjunction $\t{ind}_f \dashv \t{res}_f$ derives if $f$ is injective.

There are direct algebraic analogues to these operadic constructions, but they are most easily defined using the transfer system formalism from \cite{RubChar}. Informally, a transfer system $\to$ is the set of orbits in an indexing system. The two notions are logically equivalent, but transfer systems are easier to manage because they are smaller. We write $\to_{\s{O}}$ for the transfer system associated to an operad $\s{O}$. Every homomorphism $f : G \to G'$ between finite groups gives rise to change of group adjunctions $f_L \dashv f^{-1}_R$ and $f^{-1}_L \dashv f_R$ for transfer systems, and $f^{-1}_R = f^{-1}_L$ if $f$ is injective. The next result is Theorem \ref{thm:iminvimcompatindrescoind}.

\begin{thmB} Suppose $f : G \to G'$ is an arbitrary homomorphism between finite groups. Then:
	\begin{enumerate}
		\item{} $\to_{\bb{L}\t{res}_f \s{O}'} \,\, = f^{-1}_L(\to_{\s{O}'})$ for every marked $G'$-operad $\s{O}'$,
		\item{} $\to_{ \bb{R}\t{coind}_f \s{O}} \,\, =  f_R(\to_{\s{O}}) $ for every marked $G$-operad $\s{O}$, and
		\item{} if $f$ is injective, then $\to_{\bb{L}\t{ind}_f \s{O}} \,\, = f_L(\to_{\s{O}})$ for every marked $G$-operad $\s{O}$.
	\end{enumerate}
\end{thmB}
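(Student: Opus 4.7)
The plan is to handle each of the three parts by first identifying the derived change-of-group functor with an explicit model built from a good resolution, and then reading off the transfer system from the orbit structure of that model. Since $\to_{\s{O}}$ only records which equivariant operations $\s{O}$ supports, each case reduces to a direct comparison of the equivariant operations before and after the change of group, matched against the combinatorial definitions of $f_L$, $f^{-1}_L$, and $f_R$.

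For part (1), $\t{res}_f$ preserves weak equivalences (it is essentially a forgetful functor), so $\bb{L}\t{res}_f \s{O}'$ is modeled by pulling back the action on a cofibrant replacement $\til{\s{O}}'$ of $\s{O}'$. An operation witnessing $H \to K$ at the $G$-level is exactly an $f(K)$-fixed operation of $\til{\s{O}}'$ with input orbit controlled by $f(H)$, which is precisely the defining description of $f^{-1}_L$. For part (2), $\t{coind}_f \s{O}(n)$ is a product-like object indexed by cosets of $f(G)$ in $G'$ equipped with a diagonal $G'$-action, and the equivariant operations decompose into coherent families of operations on the $K$-orbits of cosets. Translating this back to the transfer system recovers the definition of $f_R$, which also has to agree with Blumberg and Hill's identification of the indexing system of a coinduced $N_\infty$ operad in \cite{BH}.

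The hard part is (3), since induction does not preserve weak equivalences in general; this is where the injectivity of $f$ is crucial. With $f$ injective we may regard $G$ as a subgroup of $G'$, take a $\Sigma$-free, free resolution $\til{\s{O}}$ of $\s{O}$, and model $\bb{L}\t{ind}_f \s{O}$ by $\t{ind}_f \til{\s{O}}$. The arity-$n$ level decomposes as a $G'$-set over coset representatives of $f(G)$, so a $K$-fixed operation corresponds to a coherent family of operations of $\til{\s{O}}$ indexed by the cosets permuted by $K$, with stabilizers matching under $f$. Matching this description against the definition of $f_L$ completes the argument.

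The delicate point, and the main obstacle, is verifying that the resolution step interacts cleanly with induction: we must check that no transfer relation is spuriously created or destroyed when we first resolve $\s{O}$ freely and then induce. This should go through because the transfer system depends only on the orbit pattern of each arity, and both $\Sigma$-free, free resolution and induction along an injective homomorphism preserve this pattern in a controlled way---the former by construction, and the latter because induction along an injection is simply a disjoint union over cosets with relabeled stabilizers. Once this compatibility is in place, the three identifications all follow from the explicit descriptions above.
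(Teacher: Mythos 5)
Your overall plan---model the derived functors by explicit resolutions and read off the transfer systems---is sound for part (1), and that is indeed where the paper does its one hard computation. But the paper's route through parts (2) and (3) is quite different from yours, and your proposal has a genuine gap in part (2).

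The paper only computes $\bb{L}\t{res}_f$ directly. It builds the cofibrant replacement $Q\s{O} = F_+\big(\coprod_{H/K \in A(\s{O})} \frac{G' \times \Sigma_{|H:K|}}{\Gamma(H/K)}\big)$, applies a double-coset lemma (Lemma \ref{lem:dblcosetgraph}) to decompose $\t{res}_{f\times\t{id}}\frac{G'\times\Sigma_n}{\Gamma(T)}$ into $G\times\Sigma_n$-orbits, simplifies the resulting generating set using closure of indexing systems under conjugation, restriction, and subobjects, and finally invokes \cite[Proposition B.9]{RubChar} to identify the generated transfer system with $f^{-1}_L(\to_{\s{O}})$. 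Your sketch of (1) points in this direction, but the claim that an $f(K)$-fixed operation ``is precisely the defining description of $f^{-1}_L$'' skips the nontrivial simplification and generation steps---$f^{-1}_L(\rightsquigarrow)$ is $\la (f^{-1})_\subset(\rightsquigarrow) \ra$, and you must verify that the double-coset decomposition actually produces this generated transfer system and nothing more.

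For parts (2) and (3) the paper does \emph{no} further computation at the operad level: since $\b{Tr}(G)$ and $\b{Tr}(G')$ are posets, functors into them are determined up to equality by their adjoints, so $\to_\bullet \circ \t{Ho}(\t{coind}_f) = f_R \circ \to_\bullet$ follows formally from $\to_\bullet \circ \bb{L}\t{res}_f = f^{-1}_L \circ \to_\bullet$ and the adjunctions $\bb{L}\t{res}_f \dashv \t{Ho}(\t{coind}_f)$ and $f^{-1}_L \dashv f_R$. Similarly (3) follows once one knows $f^{-1}_L = f^{-1}_R$ for injective $f$ (Proposition \ref{prop:injhomots}). Your direct attack on (2) has the serious problem that $f_R(\to) = \ra (f_\subset)^{-1}(\to) \la$ is built from the ``largest transfer system contained in'' construction of Proposition \ref{prop:cogents}, which involves a universal quantifier over conjugates $g\in G$ and subgroups $L\subset gHg^{-1}$. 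You assert that the fixed-point analysis of the coinduced operad ``recovers the definition of $f_R$,'' but this is precisely the step that is hard and not obviously true: there is no visible bridge between the end formula for $\t{coind}_f$ and the $\ra\cdot\la$ construction. This is why the paper deliberately avoids computing coinduction and instead lets adjointness do the work. Your approach to (3) is more plausible, since induction along an injection really is a coset sum with unchanged stabilizers, but you would still need to invoke Proposition \ref{prop:quilladjindrescoind} to know $\t{ind}_f$ is left Quillen before the resolution-then-induce model can be trusted---you gesture at this (``the delicate point'') but do not carry it out, and the adjunction argument makes the whole issue disappear.
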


There are analogues to parts (1) and (2) of Theorem A for marked operads, but they are easier (cf. \S\ref{subsec:markoplat}). There are also analogues to parts (1) and (2) of Theorem B for $N$ operads, but they follow from the results for marked operads (cf. \S\ref{subsec:Nopindrescoind}).

The constructions in this paper are quite explicit, and we have tried to give examples whenever possible. Moreover, the correspondences in Theorems A and B have already been applied by Balchin, Barnes, and Roitzheim to interpret their decomposition of the lattice of $C_{p^n}$-transfer systems on the operad level \cite[Remark 1]{BBR}. We hope to see further concrete applications.

\subsection*{Organization} This paper uses a handful of ideas from \cite{RubComb} and \cite{RubChar}, so we review the relevant machinery in \S\ref{sec:background}. After that, we get down to work. In \S\ref{sec:meetjoin}, we give a quick description of how meets and joins of transfer systems are calculated, and then we lift these lattice operations to the operad level in \S\ref{sec:prodcopten}. Similarly, we introduce image and inverse image constructions for transfer systems in \S\ref{sec:iminvim}, and then we lift these constructions to the operad level in \S\ref{sec:indrescoind}. Appendix \ref{app:quotop} describes a method for identifying quotient operads. It contains the most technical details needed for the proofs in \S\ref{sec:prodcopten}.

\subsection*{Acknowledgements} The first half of this paper is based on work from my dissertation, and it is a pleasure to thank Peter May for his sage advice and support. The second half of this paper grew out of conversations with Mike Hill, and it is a pleasure to thank him for continued guidance and inspiration. This work was partially supported by NSF Grant DMS--1803426.

\section{Combinatorial $N_\infty$ operads}\label{sec:background}

We review some preliminaries in this section, with an emphasis on the ways in which the homotopy theory of $N_\infty$ operads is algebraic. In \S\ref{subsec:backts}, we summarize the classification of $N_\infty$ operads in terms of transfer systems and indexing systems, and in \S\ref{subsec:combops}, we recall some basic properties of $N$ operads and marked $G$-operads.

\subsection{Transfer systems and indexing systems}\label{subsec:backts} Let $G$ be a finite group, and let $\s{O}$ be an operad in the category $\b{Top}^G$ of compactly generated weak Hausdorff $G$-spaces. An $\s{O}$-action is a continuous and equivariant parametrization of operations by $\s{O}$. The stabilizers of points $f \in \s{O}$ in the operad determine how commutative and equivariant the corresponding operations are, and the topology of $\s{O}$ imposes homotopy relations between these operations. A $N_\infty$ operad parametrizes a particular kind of equivariant homotopy-commutative structure. 

\begin{defn}A \emph{$N_\infty$ operad} is a operad $\s{O}$ in $\b{Top}^G$ such that
	\begin{enumerate}
		\item{}for every $n \geq 0$, the space $\s{O}(n)$ is $\Sigma_n$-free,
		\item{}for every $n \geq 0$ and subgroup $\Gamma \subset G \times \Sigma_n$, the space $\s{O}(n)^\Gamma$ is either empty or contractible, and
		\item{}for every $n \geq 0$, the space $\s{O}(n)^G$ is contractible.
	\end{enumerate}
\end{defn}

The first condition ensures that $\s{O}$ parametrizes no strict commutativity relations, and the third condition ensures that $\s{O}$ parametrizes a homotopy coherent commutative monoid structure, in which all data is $G$-equivariant. The $\Gamma$-fixed points give rise to equivariant transfers.

More explicitly, suppose $K \subset H \subset G$ is a chain of subgroups with $\abs{H:K} = n$, and suppose $\sigma : H \to \Sigma_n$ is a permutation representation of the $H$-orbit $H/K$. Let $\Gamma(H/K) = \{ (h, \sigma(h)) \, | \, h \in H \} \subset G \times \Sigma_n$ be the graph of $\sigma$, and assume that $f \in \s{O}(n)^{\Gamma(H/K)}$. Then $f$ represents $G$-maps
	\[
	\ol{\t{tr}}_K^H : G \times_H \t{coind}^H_K\t{res}^G_K X \to X	\quad\t{and}\quad	\ol{\t{n}}_K^H : G_+ \land_H N_K^H\t{res}^G_K E \to E
	\]
on all $\s{O}$-algebra $G$-spaces $X$ and $G$-spectra $E$. These maps are external transfers and norms. In the first case, passing to the adjoint $\t{coind}_K^H\t{res}^G_K X \to \t{res}^G_H X$ and then taking $H$-fixed points yields an internal transfer map $X^K \to X^H$.

Given any $N_\infty$ $G$-operad $\s{O}$, there is a corresponding relation on the set $\b{Sub}(G)$ of all subgroups $G$, which encodes the transfers parametrized by $\s{O}$.

\begin{defn}\label{defn:tsNinfty} Suppose $\s{O}$ is a $N_\infty$ $G$-operad. Define a binary relation $\to_{\s{O}}$ on $\b{Sub}(G)$ by
	\[
	K \to_{\s{O}} H	\quad\t{if and only if}\quad		K \subset H \t{ and } \s{O}(\abs{H:K})^{\Gamma(H/K)} \neq \varnothing,
	\]
where $\Gamma(H/K)$ is the graph of some chosen permutation representation of $H/K$.
\end{defn}

The relation $\to_{\s{O}}$ satisfies conditions that reflect the operad structure on $\s{O}$. These conditions are axiomatized in the next definition, formulated independently in \cite{BBR} and \cite{RubChar}.

\begin{defn} A \emph{$G$-transfer system} is a partial order $\to$ on $\b{Sub}(G)$ such that for any $K, H \in \b{Sub}(G)$, if the relation $K \to H$ holds, then:
	\begin{enumerate}[label=(\alph*)]
		\item{}the inclusion $K \subset H$ holds,
		\item{}the relation $gKg^{-1} \to gHg^{-1}$ holds for every $g \in G$, and
		\item{}the relation $L \cap K \to L$ holds for every subgroup $L \subset H$.
	\end{enumerate}
We let $\b{Tr}(G)$ denote the lattice of all $G$-transfer systems, ordered under refinement.
\end{defn}

Succinctly, a $G$-transfer system is a partial order on $\b{Sub}(G)$ that refines inclusion and is closed under conjugation and restriction. We identify a $G$-transfer system $\to$ with the set of pairs $\{ (K,H) \in \b{Sub}(G)^{\times 2} \, | \, K \to H\}$, and we visualize $\to$ as a graph, whose nodes are the subgroups of $G$, and whose edges represent nontrivial relations in $\to$. 

More generally, if $\s{O}$ is a $N_\infty$ $G$-operad and $f \in \s{O}(n)$, then the stabilizer $\t{Stab}(f)$ is a graph subgroup in the following sense.

\begin{defn}Suppose $n \geq 0$ and $\Gamma \subset G \times \Sigma_n$ is a subgroup. Then $\Gamma$ is a \emph{graph subgroup} if there is a subgroup $H \subset G$ and a $n$-element $H$-set $T$ such that $\Gamma = \{ (h, \sigma(h)) \, | \, h \in H \}$ for some permutation representation $\sigma : H \to \Sigma_n$ of $T$. In such a case, we write $\Gamma = \Gamma(T)$.
\end{defn}

If the operation $f \in \s{O}$ satisfies $\t{Stab}(f) = \Gamma(T)$, then $f$ represents an external $T$-indexed transfer or norm on $G$-spaces and $G$-spectra. It is sometimes convenient to keep track of all such $T$-indexed operations.

\begin{defn}\label{defn:Tadm} Suppose $\s{O}$ is a $N_\infty$ $G$-operad. For any subgroup $H \subset G$ and finite $H$-set $T$, we say $T$ is \emph{admissible} for $\s{O}$ if $\s{O}(\abs{T})^{\Gamma(T)} \neq \varnothing$. We write $A(\s{O})$ for the $\b{Sub}(G)$-graded class of all admissible sets of $\s{O}$.
\end{defn}

The class $A(\s{O})$ also satisfies conditions that reflect the operad structure on $\s{O}$.

\begin{defn}A \emph{$G$-indexing system} is a $\b{Sub}(G)$-graded class $\c{I}$, whose $H$-component $\c{I}(H)$ is class of finite $H$-sets that contains all trivial actions, and which is closed under isomorphism, conjugation, restriction, subobjects, coproducts, and self-induction, i.e. if $T \in \c{I}(K)$ and $H/K \in \c{I}(H)$, then $\t{ind}_K^H T = H \times_K T \in \c{I}(H)$. We write $\b{Ind}(G)$ for the lattice of all $G$-transfer systems, ordered under inclusion.
\end{defn}

Every $G$-indexing system $\c{I}$ determines a $G$-transfer system $\to_{\c{I}}$, where
	\[
	K \to_{\c{I}} H	\quad\t{if and only if}\quad	K \subset H \t{ and } H/K \in \c{I}(H).
	\]
Moreover, indexing systems and transfer systems are equivalent, essentially because indexing systems are determined by their orbits.

\begin{thm}[\cite{BBR} and \cite{RubChar}] The map $\to_{\bullet} \, : \b{Ind}(G) \to \b{Tr}(G)$ is a lattice isomorphism for any finite group $G$.
\end{thm}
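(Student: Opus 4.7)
The plan is to construct an explicit inverse map $\c{I}_{\bullet} : \b{Tr}(G) \to \b{Ind}(G)$ and check that it is well-defined, mutually inverse to $\to_{\bullet}$, and order-preserving. Given a $G$-transfer system $\to$, define $\c{I}_{\to}(H)$ to be the class of finite $H$-sets $T$ such that every orbit of $T$ is isomorphic to $H/K$ for some $K \to H$. The philosophy is that indexing systems are determined by their orbits, so the orbit-wise data captured by a transfer system should suffice to reconstruct the indexing system.

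The main technical work is verifying that $\c{I}_{\to}$ satisfies the closure axioms of an indexing system. Trivial $H$-sets lie in $\c{I}_{\to}(H)$ since reflexivity of the partial order $\to$ gives $H \to H$. Closure under isomorphism, subobjects, and coproducts is immediate from the orbit characterization. Closure under conjugation uses axiom (b) of a transfer system. Closure under restriction is the most delicate step: given $T = \bigsqcup_i H/K_i$ with $K_i \to H$ and $L \subset H$, the double-coset formula decomposes $\t{res}^H_L(H/K_i)$ into orbits $L/(L \cap gK_ig^{-1})$ for coset representatives $g \in H$, and each such orbit lies in $\c{I}_{\to}(L)$ because $gK_ig^{-1} \to H$ by conjugation and then $L \cap gK_ig^{-1} \to L$ by restriction axiom (c). Finally, closure under self-induction is exactly where transitivity of $\to$ enters: if $T = \bigsqcup K/L_i \in \c{I}_{\to}(K)$ and $H/K \in \c{I}_{\to}(H)$, then $\t{ind}_K^H T = \bigsqcup H/L_i$, and chaining $L_i \to K \to H$ gives $L_i \to H$.

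Once $\c{I}_{\to}$ is confirmed to be an indexing system, the two maps are mutually inverse by direct computation. For $\c{I}_{\to_{\c{I}}} = \c{I}$, one uses that a general $T \in \c{I}(H)$ decomposes into orbits, which lie in $\c{I}(H)$ by the subobject axiom, and conversely any coproduct of orbits from $\c{I}(H)$ lies in $\c{I}(H)$ by the coproduct axiom. For $\to_{\c{I}_{\to}} = \to$, one unwinds the definitions and uses that $\to$ already refines inclusion. Both assignments are patently monotone with respect to refinement, so a monotone bijection of posets that are already known to be lattices is automatically a lattice isomorphism.

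The hard part is really the restriction axiom: one must keep track of the double-coset decomposition carefully and notice that the transfer system axioms (b) and (c) have been precisely designed so that this computation goes through. Everything else is formal bookkeeping, and the result is essentially a verification that the axiomatization of transfer systems is the minimal orbit-level reflection of the indexing-system axioms.
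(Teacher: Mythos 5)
Your proof is correct and follows the standard approach from the cited references \cite{BBR} and \cite{RubChar}: the paper itself does not prove this theorem but only cites it, and your construction of the explicit inverse $\c{I}_\to(H) = \{T : \t{every orbit of } T \t{ is } \cong H/K \t{ with } K \to H\}$, together with the double-coset verification of the restriction axiom and the use of transitivity for self-induction, is precisely how the equivalence is established there. One small phrasing nit: a monotone bijection of posets is not automatically an order isomorphism, but since you've noted that both $\to_\bullet$ and $\c{I}_\bullet$ are monotone and mutually inverse, the conclusion that it is an order (hence lattice) isomorphism does follow.
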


Every $N_\infty$ $G$-operad $\s{O}$ gives rise to a $G$-indexing system $A(\s{O})$ and a $G$-transfer system $\to_{\s{O}}$, which are related by the formula $\to_{A(\s{O})} \,\, = \,\, \to_{\s{O}}$. Furthermore, these objects completely determine $\s{O}$ up to homotopy. Declare a map $\vp : \s{O} \to \s{P}$ between $N_\infty$ operads to be a \emph{weak equivalence} if the map $\vp : \s{O}(n)^{\Gamma} \to \s{P}(n)^{\Gamma}$ is a weak homotopy equivalence of spaces for every integer $n \geq 0$ and graph subgroup $\Gamma \subset G \times \Sigma_n$. Let $\t{Ho}(N_\infty\t{-}\b{Op}^G)$ denote the category of $N_\infty$ $G$-operads with weak equivalences inverted. Then we have the following classification theorem.

\begin{thm}[\cite{BH}, \cite{BonPer}, \cite{GutWhite}, and \cite{RubComb}]\label{thm:classify} The functor
	\[
	N_\infty\t{-}\b{Op}^G \stackrel{A}{\longrightarrow} \b{Ind}(G) \cong \b{Tr}(G)
	\]
that sends an operad $\s{O}$ to the indexing system $A(\s{O})$, and an operad map $\s{O} \to \s{P}$ to the inclusion $A(\s{O}) \subset A(\s{P})$ induces an equivalence $\t{Ho}(N_\infty\t{-}\b{Op}^G) \simeq \b{Ind}(G) \cong \b{Tr}(G)$ of $1$-categories.
\end{thm}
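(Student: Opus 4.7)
The plan is to verify the four standard ingredients for an equivalence of categories: the functor $A$ is well-defined, it inverts weak equivalences, the induced functor on the homotopy category is essentially surjective, and it is fully faithful. Since $\b{Ind}(G)$ is a poset (there is at most one morphism between any two objects), fully faithfulness reduces to the statement that $\t{Hom}_{\t{Ho}(N_\infty\t{-}\b{Op}^G)}(\s{O},\s{P})$ is nonempty if and only if $A(\s{O}) \subset A(\s{P})$.

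First I would check that $A(\s{O})$ is an indexing system by translating each closure axiom into an operad property: the $G$-action on the spaces $\s{O}(n)$ gives closure under conjugation; the structure morphisms of $\s{O}$ (composing with identities at the forgotten slots, or precomposing with a bijection of labels) give closure under restriction, isomorphism, and subobjects; an operadic composition with a binary product supplies closure under disjoint unions; and a composition of the form $\gamma(f; g, \ldots, g)$, where $f$ realizes $H/K$ and $g$ realizes $T \in A(\s{O})(K)$, yields the self-induction axiom. Homotopy invariance is then immediate: a weak equivalence $\s{O} \to \s{P}$ restricts to a $\pi_0$-equivalence on each graph-subgroup fixed-point space, so $A(\s{O}) = A(\s{P})$.

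For the fully faithful step, the forward direction is functoriality. For the converse, given $A(\s{O}) \subset A(\s{P})$, I would consider the product $\s{O} \times \s{P}$, which is again a $N_\infty$ operad because $(\s{O} \times \s{P})(n)^\Gamma = \s{O}(n)^\Gamma \times \s{P}(n)^\Gamma$ is contractible whenever nonempty. The hypothesis gives $A(\s{O} \times \s{P}) = A(\s{O})$, so the projection $\s{O} \times \s{P} \to \s{O}$ has nonempty fixed-point spaces in exactly the same arities as $\s{O}$ and is therefore a weak equivalence; the zigzag $\s{O} \xleftarrow{\sim} \s{O} \times \s{P} \to \s{P}$ then supplies the required morphism in the homotopy category.

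The main obstacle is essential surjectivity, for which we must produce, from a given indexing system $\c{I}$, a $N_\infty$ operad $\s{O}$ with $A(\s{O}) = \c{I}$. Following \cite{RubComb}, I would first build a $\Sigma$-free discrete operad in $G$-sets whose $\Gamma(T)$-fixed orbits realize exactly the $H$-sets prescribed by $\c{I}(H)$; here the self-induction closure axiom is precisely what ensures that operadic composition of admissible orbits stays inside the prescribed list. Then I would equivariantly attach cells to contractibilize each nonempty graph-subgroup fixed-point space, taking care that no new nonempty fixed-point loci are created in the process. The bookkeeping for this cell-by-cell inductive contractibilization is the technical heart of the argument, and is exactly where the independent approaches of \cite{BonPer}, \cite{GutWhite}, and \cite{RubComb} diverge.
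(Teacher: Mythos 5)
The paper does not prove this theorem; it is recalled as a citation to \cite{BH}, \cite{BonPer}, \cite{GutWhite}, and \cite{RubComb}, so there is no in-paper proof to compare against. Your high-level strategy is nonetheless the standard one, and most of your ingredients are correct: the verification that $A(\s{O})$ is an indexing system, the observation that $A$ inverts weak equivalences, the product zigzag $\s{O} \xleftarrow{\sim} \s{O} \times \s{P} \to \s{P}$ for producing a morphism whenever $A(\s{O}) \subset A(\s{P})$, and the acknowledgment that essential surjectivity is the hard part deferred to the cited works.

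There is, however, a genuine gap in your reduction of fully faithfulness. You write that, because $\b{Ind}(G)$ is a poset, fully faithfulness reduces to the claim that $\t{Hom}_{\t{Ho}(N_\infty\t{-}\b{Op}^G)}(\s{O},\s{P})$ is nonempty if and only if $A(\s{O}) \subset A(\s{P})$. That claim is only fullness. Fully faithful requires a \emph{bijection} of hom-sets, and since the target hom-set has at most one element, you additionally need that the source hom-set $\t{Hom}_{\t{Ho}(N_\infty\t{-}\b{Op}^G)}(\s{O},\s{P})$ has at most one element --- in other words, that the homotopy category of $N_\infty$ operads is itself a poset. This is not automatic: a priori the localization could carry many distinct zigzag classes. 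It is a substantive result, proved in \cite[Proposition 5.5]{BH} by showing that the mapping spaces in the hammock localization $L^H(N_\infty\t{-}\b{Op}^G)$ are either empty or contractible. Your product-trick argument constructs a morphism when one should exist, but says nothing about why two such morphisms must coincide. To close the gap you would need either to invoke the Blumberg--Hill mapping-space computation directly, or to argue from the discrete models of \cite{RubComb} where the simplicial hom-objects $\ul{\b{Op}}^G_+(\s{O},\s{P}) = E\b{Op}^G_+(\s{O},\s{P})$ are visibly empty or contractible by construction.
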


In fact, the mapping spaces in the hammock localization $L^H(N_\infty\t{-}\b{Op}^G)$ are all either empty or contractible \cite[Proposition 5.5]{BH}, so $\to_{\bullet}$ also induces a DK equivalence $L^H(N_\infty\t{-}\b{Op}^G) \simeq \b{Ind}(G) \cong \b{Tr}(G)$. This is one sense in which the homotopy theory of $N_\infty$ operads is algebraic.

\subsection{Combinatorial models of $N_\infty$ operads}\label{subsec:combops}

Another sense in which the homotopy theory of $N_\infty$ operads is algebraic is that there are categories of discrete $G$-operads, whose underlying homotopy theories are equivalent to $L^H(N_\infty\t{-}\b{Op}^G)$. We review some material from \cite[\S\S3--4]{RubComb}.

Let $\b{Op}^G$ denote the category of symmetric operads in $\b{Set}^G$. The simplest discrete models for $N_\infty$ operads are their natural analogues in $\b{Op}^G$.

\begin{defn} A \emph{$N$ operad} is an operad $\s{O}$ in $\b{Set}^G$ such that
	\begin{enumerate}
		\item{}for every $n \geq 0$, the set $\s{O}(n)$ is $\Sigma_n$-free, and
		\item{}for every $n \geq 0$, the set $\s{O}(n)^G$ is nonempty.
	\end{enumerate}
We let $N\t{-}\b{Op}^G$ denote the full subcategory of $\b{Op}^G$ spanned by the $N$ operads.
\end{defn}

We construct $N_\infty$ operads from $N$ operads by attaching cells. Let $\b{sSet}$ denote the category of simplicial sets, and let
	\[
	(-)_0 : \b{sSet} \leftrightarrows \b{Set} : E
	\]
be the $0$-simplices functor and its right adjoint. For any set $X$ and $q \geq 0$, the set of $q$-simplices of $EX$ is $X^{\times q + 1}$, and the face and degeneracy maps of $EX$ are obtained by omitting and repeating coordinates. The simplicial set $EX$ is contractible whenever $X$ is nonempty, and we have $E\varnothing = \varnothing$. 

The functor $E$ and geometric realization $\abs{\cdot}$ both preserve finite limits, and therefore we obtain a composite functor
	\[
	\bb{E} = \abs{\cdot} \circ E : N\t{-}\b{Op}^G \to N_\infty\t{-}\b{Op}^G.
	\]
Declare a morphism $\vp : \s{O} \to \s{P}$ in $\b{Op}^G$ to be a \emph{weak equivalence} if the induced map $\abs{E\vp} : \abs{E\s{O}(n)}^\Gamma \to \abs{E\s{P}(n)}^\Gamma$ is a weak homotopy equivalence of spaces for every integer $n \geq 0$ and graph subgroup $\Gamma \subset G \times \Sigma_n$. This boils down to the condition that $\s{O}(n)^\Gamma$ is nonempty whenever $\s{P}(n)^\Gamma$ is. Then the functor $\bb{E}$ preserves weak equivalences, and we actually obtain an equivalence of homotopy theories.

\begin{thm}[\protect{\cite[Theorem 3.6]{RubComb}}]\label{thm:Nopmod} The functor
	\[
	\bb{E} : N\t{-}\b{Op}^G \to N_\infty\t{-}\b{Op}^G
	\]
induces a DK equivalence between the hammock localizations of $N\t{-}\b{Op}^G$ and $N_\infty\t{-}\b{Op}^G$.
\end{thm}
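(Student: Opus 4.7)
\medskip

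The plan is to bootstrap off Theorem \ref{thm:classify}. Both the domain $N\t{-}\b{Op}^G$ and the codomain $N_\infty\t{-}\b{Op}^G$ carry a functor $A$ to $\b{Ind}(G)$, and these functors agree through $\bb{E}$ since $(\bb{E}\s{O})(n)^\Gamma = \abs{E\s{O}(n)^\Gamma}$ is nonempty if and only if $\s{O}(n)^\Gamma$ is. First I would check that $\bb{E}$ preserves weak equivalences, which is immediate from the definition, and that $A : N\t{-}\b{Op}^G \to \b{Ind}(G)$ inverts weak equivalences, which is equally immediate. This gives a commutative triangle of functors, and reduces the DK equivalence claim to two statements about $N\t{-}\b{Op}^G$: that $A$ induces an equivalence $\t{Ho}(N\t{-}\b{Op}^G) \simeq \b{Ind}(G)$, and that the mapping spaces of $L^H(N\t{-}\b{Op}^G)$ are each either empty or contractible. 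Given the second fact, the first fact combined with the analogous DK description $L^H(N_\infty\t{-}\b{Op}^G) \simeq \b{Ind}(G)$ from the remark after Theorem \ref{thm:classify} would finish the proof.

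For essential surjectivity of $A : \t{Ho}(N\t{-}\b{Op}^G) \to \b{Ind}(G)$, I would construct, for each indexing system $\c{I}$, a ``universal'' $N$ operad $\s{U}_\c{I}$ whose arity-$n$ part is the free $\Sigma_n$-set generated by all $H$-equivariant injections $T \hookrightarrow \{1,\dots,n\}$ where $T$ ranges over $\c{I}(H)$-admissible sets, with operadic composition performed in the obvious way on labeled trees. By construction $A(\s{U}_\c{I}) = \c{I}$, so $A$ is essentially surjective. For fullness and faithfulness at the level of the homotopy category, it suffices to observe that $\b{Ind}(G)$ is a poset, so one just needs that a zigzag of weak equivalences between $\s{O}$ and $\s{P}$ exists if and only if $A(\s{O}) = A(\s{P})$. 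The ``only if'' direction is formal. For the ``if'' direction, whenever $A(\s{O}) \subset A(\s{P})$ the product $\s{O} \times \s{P}$ is again a $N$ operad with $A(\s{O}\times\s{P}) = A(\s{O}) \land A(\s{P}) = A(\s{O})$, so the projection $\s{O}\times \s{P} \to \s{O}$ is a weak equivalence, yielding a zigzag $\s{O} \leftarrow \s{O}\times\s{P} \to \s{P}$.

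The main obstacle is the mapping space claim. The cleanest route I see is to mimic the argument of \cite[Proposition~5.5]{BH}: one fixes a cosimplicial resolution (or a standard framing) of objects in $N\t{-}\b{Op}^G$, and analyzes the resulting simplicial set of maps directly. The key combinatorial input is that between any two $N$ operads $\s{O}$ and $\s{P}$ with $A(\s{O}) \subset A(\s{P})$, the set $\b{Op}^G(\s{O},\s{P})$ of operad morphisms is nonempty, and moreover the groupoid of such morphisms and their natural equivalences is either empty or connected. Concretely, because each $\s{P}(n)^\Gamma$ is nonempty whenever $\s{O}(n)^\Gamma$ is, one can inductively choose images of generators compatibly with the operad structure using a tree induction; the choices form a contractible diagram under refinement, since convex combinations of choices are not available but any two choices can be compared by passing through the product $\s{O}\times\s{P}$. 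Carrying this rigidity through the hammock localization, using that every weak equivalence between $N$ operads becomes invertible, then shows each $\t{Map}_{L^H}(\s{O},\s{P})$ is either empty (when $A(\s{O})\not\subset A(\s{P})$) or has the homotopy type of a point (when $A(\s{O})\subset A(\s{P})$).

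Finally, $\bb{E}$ sends the empty mapping spaces to empty mapping spaces and the contractible ones to contractible ones, since both emptiness patterns are governed by inclusion of indexing systems, and on the level of homotopy categories $\bb{E}$ is compatible with the equivalences of both sides to $\b{Ind}(G)$. The DK equivalence then follows from the standard criterion.
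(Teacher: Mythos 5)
The paper does not reprove this theorem; it cites it from \cite{RubComb}. That said, Proposition~\ref{prop:Eu} later in the paper indicates the intended argument: the functors $\bb{E}$ and $(-)^u$ (the topology-forgetting functor) form a two-sided homotopy inverse pair, connected to the respective identities by \emph{natural} zig-zags of weak equivalences obtained from the product trick, and the DK equivalence then follows from the Dwyer--Kan theorem asserting that functors admitting such a homotopy inverse induce DK equivalences on hammock localizations.

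Your route is genuinely different --- you factor through $\b{Ind}(G)$ and invoke two-out-of-three --- and it is sound \emph{in outline}, but it has a real gap at the step you acknowledge is the ``main obstacle.'' To apply two-out-of-three you must first establish that $L^H(N\t{-}\b{Op}^G) \simeq \b{Ind}(G)$, and for this you need that every mapping space in $L^H(N\t{-}\b{Op}^G)$ is empty or contractible. You propose to ``mimic \cite[Proposition~5.5]{BH},'' but that argument leans on the topology of the $N_\infty$ setting (contractibility of fixed-point subspaces), which is not available for a \emph{discrete} operad whose fixed-point sets can be large and highly disconnected. In the discrete setting, what has to be controlled is the combinatorics of hammocks (zig-zags of weak equivalences in $N\t{-}\b{Op}^G$), and the sketched ``tree induction + passing through a product'' does not supply the required contractibility of the diagram of hammocks. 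This is precisely the difficulty the two-sided inverse approach avoids: once you know $\bb{E}$ is a DK equivalence, contractibility of the mapping spaces on the $N\t{-}\b{Op}^G$ side comes for free from the known result on the $N_\infty$ side, so attempting to establish it first is working uphill.

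You already have the right ingredient in hand but deploy it for a narrower purpose: the product zig-zag $\s{O} \leftarrow \s{O}\times\s{P} \to \s{P}$ that you use to show fullness and faithfulness of $A$ on homotopy categories is the same device that, applied to $\bb{E}(\s{O}^u) \leftarrow \bb{E}(\s{O}^u)\times\s{O} \to \s{O}$ and its companion diagram, exhibits $\bb{E}$ and $(-)^u$ as homotopy inverse. That also immediately handles essential surjectivity, so the bespoke ``universal $N$ operad $\s{U}_{\c{I}}$'' (whose operad structure you leave unverified) is unnecessary.
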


The category $N\t{-}\b{Op}^G$ is simple and explicit, but it has a number of deficiencies. In particular, $N\t{-}\b{Op}^G$ is neither complete nor cocomplete, and operadic induction $\t{ind}_H^G : \b{Op}^H \to \b{Op}^G$ does not preserve $N$ operads. We introduce a model category of operads in $\b{Set}^G$ to remedy these issues.

To start, note that the category $\b{Op}^G$ is complete and cocomplete for formal reasons. We write $\s{O} * \s{P}$ for the coproduct in $\b{Op}^G$ in analogy to the coproduct of nonabelian groups. The category $\b{Op}^G$ is also locally finitely presentable. If
	\[
	F : \b{Sym}^G \leftrightarrows \b{Op}^G : U
	\]
is the free-forgetful adjunction from symmetric sequences of $G$-sets, then the free operads $F(G \times \Sigma_n)$ form a strong generator for $\b{Op}^G$ in the sense of \cite{AR}.

Let $\b{F}$ be the free $G$-operad on the symmetric sequence $\frac{G \times \Sigma_0}{G} \sqcup \frac{G \times \Sigma_2}{G}$, and write $\b{Op}^G_+$ for the slice category $\b{F}/\b{Op}^G$ of $G$-operads under $\b{F}$. By adjunction, an object of $\b{Op}^G_+$ is an operad $\s{O} \in \b{Op}^G$ equipped with a marked constant $u \in \s{O}(0)^G$ and binary product $p\in \s{O}(2)^G$, and a morphism in $\b{Op}^G_+$ is a morphism of $G$-operads that preserves the marked operations. The category $\b{Op}^G_+$ is also complete, cocomplete, and locally finitely presentable.

The category $\b{Op}^G_+$ carries a simplicial enrichment, which is most quickly defined using the adjunction $(-)_0 \dashv E$ from above.  Both of the functors $(-)_0$ and $E$ preserve products, and thus we can enrich, tensor, and cotensor $\b{Op}^G_+$ over $\b{sSet}$ by using the hom objects
	\[
	\ul{\b{Op}}^G_+(\s{O},\s{P}) = E\b{Op}^G_+(\s{O},\s{P}),
	\]
and setting $K \otimes \s{O} = \coprod_{K_0} \s{O}$ and $\s{O}^K = \prod_{K_0} \s{O}$ for any $K \in \b{sSet}$ and $\s{O} \in \b{Op}^G_+$. Every hom space $\ul{\b{Op}}^G_+(\s{O},\s{P})$ is either empty or contractible.

Declare a morphism $\vp : \s{O} \to \s{P}$ in $\b{Op}^G_+$ to be a weak equivalence if it is a weak equivalence in $\b{Op}^G$. Next, let
	\[
	F_+ : \b{Sym}^G \leftrightarrows \b{Op}^G_+ : U
	\]
be the free-forgetful adjunction, so that $F_+(S) \cong F(\frac{G \times \Sigma_0}{G} \sqcup \frac{G \times \Sigma_2}{G} \sqcup S)$. We take
	\[
	\s{I}_+ = \Bigg\{ F_+\Bigg(\varnothing \,\,\longrightarrow\,\, \frac{G \times \Sigma_n}{\Gamma} \Bigg)
		\,\Bigg|\,
		\begin{array}{c}
			n \geq 0 , \, \Gamma \subset G \times \Sigma_n \t{ a}	\\
			\t{graph subgroup}
		\end{array}\Bigg\}
	\]
as a set of \emph{generating cofibrations}, and
	\[
	\s{J}_+ = \Bigg\{ F_+\Bigg( \frac{G \times \Sigma_n}{\Gamma} \,\,\stackrel{\t{inc}}{\longrightarrow}\,\, \frac{G \times \Sigma_n}{\Gamma} \sqcup \frac{G \times \Sigma_n}{\Gamma}\Bigg)
		\,\Bigg|\,
		\begin{array}{c}
			n \geq 0 , \, \Gamma \subset G \times \Sigma_n \t{ a}	\\
			\t{graph subgroup}
		\end{array}
		\Bigg\}
	\]
as a set of \emph{generating acyclic cofibrations}. These data determine a model category structure on $\b{Op}^G_+$ that is compatible with the simplicial enrichment, and which presents the homotopy theory of $N_\infty$ operads.

\begin{thm}[\cite{RubComb}]\label{thm:modcat} The category $\b{Op}^G_+$ is a right proper, combinatorial, simplicial model category, with weak equivalences and generating (acyclic) cofibrations as above. Moreover:
	\begin{enumerate}
		\item{}every object of $\b{Op}^G_+$ is fibrant, and
		\item{}if $\s{O} \in \b{Op}^G_+$ is cofibrant, then $\s{O}$ is a $N$ operad, and $\bb{E}\s{O}$ is a $N_\infty$ operad.
	\end{enumerate}
Let $Q$ be a cofibrant replacement functor on $\b{Op}^G_+$. Then the composite
	\[
	\bb{LE} = \bb{E} \circ Q : \b{Op}^G_+ \to N_\infty\t{-}\b{Op}^G
	\]
induces a DK equivalence between the hammock localizations of $\b{Op}^G_+$ and $N_\infty\t{-}\b{Op}^G$.
\end{thm}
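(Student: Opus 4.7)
The plan is to apply Jeff Smith's recognition theorem for combinatorial model categories, which is available because $\b{Op}^G_+$ is locally finitely presentable. One must verify that (i) the class $\s{W}$ of weak equivalences is an accessible subcategory closed under 2-out-of-3 and retracts, (ii) every map with the right lifting property against $\s{I}_+$ lies in $\s{W}$, and (iii) every relative $\s{J}_+$-cell complex is both an $\s{I}_+$-cofibration and a weak equivalence. Closure properties for $\s{W}$ are inherited arity-wise and fixed-point-wise from $\b{Top}$, and accessibility follows because each functor $\s{O} \mapsto \abs{E\s{O}(n)}^\Gamma$ preserves filtered colimits of monomorphisms, so $\s{W}$ can be characterized by a small-injectivity condition indexed by graph subgroups.

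The main obstacle is condition (iii). A pushout along a generator $F_+\bigl(\frac{G\times\Sigma_n}{\Gamma} \to \frac{G\times\Sigma_n}{\Gamma} \sqcup \frac{G\times\Sigma_n}{\Gamma}\bigr)$ adjoins a duplicate of an operation whose stabilizer already contains $\Gamma$, so on each graph-fixed component the induced map $\s{O}(m)^{\Gamma'} \to \s{O}'(m)^{\Gamma'}$ carries a nonempty set to a nonempty set. Applying $\bb{E}$ then produces a weak equivalence of contractible-or-empty spaces on every fixed component, and transfinite composition preserves this feature because fixed-point functors commute with filtered colimits. This gives the acyclicity half of (iii); the cofibration half is formal.

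Condition (ii) and the structure of cofibrant objects come from analyzing relative $\s{I}_+$-cell complexes. Each pushout along $F_+(\varnothing \to \frac{G\times\Sigma_n}{\Gamma})$ freely adjoins an arity-$n$ operation with graph stabilizer $\Gamma$, and since graph subgroups meet $\Sigma_n$ trivially, $\Sigma_n$-freeness is preserved; moreover, the marking $u, p$ inherited from $\b{F}$ produces $G$-fixed operations in every arity via operadic composition. Thus cofibrant objects are $N$ operads, and $\bb{E}$ sends them to $N_\infty$ operads because $E$ takes nonempty sets to contractible simplicial sets. The SM7 axiom reduces, via the identification $K \otimes \s{O} = \coprod_{K_0}\s{O}$, to a straightforward check on pushout-products with $\partial\Delta^n \hookrightarrow \Delta^n$. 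Every object is fibrant because the codomain of each generator of $\s{J}_+$ admits a fold retraction onto its common summand, yielding lifts automatically; right properness then follows since every object is fibrant and fixed-point functors preserve the pullbacks used to define weak equivalences.

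For the final DK equivalence, the cofibrant replacement $Q\s{O} \to \s{O}$ is a weak equivalence in $\b{Op}^G_+$, and $Q\s{O}$ is a $N$ operad by part (2). Forgetting the marking thus gives a weak-equivalence-preserving functor from the cofibrant-object subcategory of $\b{Op}^G_+$ to $N\t{-}\b{Op}^G$, exhibiting $\bb{LE} = \bb{E} \circ Q$ as the composite of this functor with $\bb{E} : N\t{-}\b{Op}^G \to N_\infty\t{-}\b{Op}^G$. Combining this factorization with Theorem \ref{thm:Nopmod} and the standard comparison between hammock localizations of a model category and of its cofibrant subcategory yields the asserted DK equivalence between $\b{Op}^G_+$ and $N_\infty\t{-}\b{Op}^G$.
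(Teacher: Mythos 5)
The paper cites this theorem to \cite{RubComb} and does not reprove it here, so there is no internal proof to compare against; I'll assess your argument on its own terms. The overall strategy --- invoke Smith's recognition theorem on the locally finitely presentable category $\b{Op}^G_+$, analyze $\s{I}_+$- and $\s{J}_+$-cells to identify cofibrant objects and check acyclicity, use fold retractions for fibrancy, and factor $\bb{LE}$ through $N\t{-}\b{Op}^G$ for the DK equivalence --- is a sensible and workable outline.

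There are two genuine gaps. First, in the acyclicity check for $\s{J}_+$-cells you argue that the pushout ``carries a nonempty set to a nonempty set,'' but that is the trivial direction: any $G\times\Sigma_n$-equivariant map automatically carries $\Gamma'$-fixed points to $\Gamma'$-fixed points, so nonemptiness of $\s{O}(m)^{\Gamma'}$ always implies nonemptiness of $\s{O}'(m)^{\Gamma'}$. What the definition of weak equivalence actually demands is the converse: if $\s{O}'(m)^{\Gamma'}\neq\varnothing$ then $\s{O}(m)^{\Gamma'}\neq\varnothing$. You already have the right tool for this --- the fold retraction that you correctly deploy for fibrancy --- because a pushout along a split monomorphism is again a split monomorphism, so $\s{O}\to\s{O}'$ admits a retraction and thus creates no new $\Gamma'$-fixed points. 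You should run that argument here rather than the one you wrote.

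Second, the final DK equivalence does not follow from what you have established. You show that forgetting the marking gives a weak-equivalence-preserving functor from the cofibrant objects of $\b{Op}^G_+$ to $N\t{-}\b{Op}^G$, and you then compose with Theorem~\ref{thm:Nopmod}. But ``weak-equivalence-preserving'' is far weaker than ``DK equivalence,'' and the forgetful functor is not essentially surjective on the nose (the paper explicitly remarks that not every marked $N$ operad is cofibrant in $\b{Op}^G_+$). You need an actual homotopy inverse. A clean route, mirroring Proposition~\ref{prop:Eu} and the discussion in \S\ref{subsec:topinterpindrescoind}, is to exhibit $F_+\circ(-)^u : N_\infty\t{-}\b{Op}^G \to \b{Op}^G_+$ and show that $\bb{LE}$ and $F_+\circ(-)^u$ are inverse up to zig-zags of natural weak equivalences, using the product trick $X \leftarrow X\times Y \to Y$ for operads with identical admissible sets. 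Alternatively, one can note that all hom spaces in both hammock localizations are empty or contractible, reduce the claim to an equivalence of $1$-categories, and use the transfer-system invariant to check fully-faithfulness and essential surjectivity; but either way the step you glossed over is where the actual work lies.
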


Not every marked $N$ operad is cofibrant in $\b{Op}^G_+$; such operads are more closely akin to $\Sigma$-cofibrant operads in the sense of Berger and Moerdijk \cite{BergMoerd}.

The equivalences between the homotopy theories of $N\t{-}\b{Op}^G$, $\b{Op}^G_+$, and $N_\infty\t{-}\b{Op}^G$ enable us to analyze the homotopy theory of $N_\infty$ operads in purely combinatorial terms. As illustrated in appendix \ref{app:quotop}, interesting questions about $N_\infty$ operads transform into intricate word problems for operads in $G$-sets.

We end with a small observation. Combining Theorems \ref{thm:classify}, \ref{thm:Nopmod}, and \ref{thm:modcat} yields equivalences $\t{Ho}(\b{Op}^G_+) \simeq \b{Tr}(G) \simeq \t{Ho}(N\t{-}\b{Op}^G)$ that send an operad $\s{O} \in \b{Op}^G_+$ to the transfer system $\to_{\abs{EQ\s{O}}}$ and an operad $\s{O} \in N\t{-}\b{Op}^G$ to $\to_{\abs{E\s{O}}}$. However, for any graph subgroup $\Gamma \subset G \times \Sigma_n$, we have
	\[
	\abs{EQ\s{O}(n)}^\Gamma \neq \varnothing \iff \s{O}(n)^\Gamma \neq \varnothing \iff \abs{E\s{O}(n)}^\Gamma \neq \varnothing .
	\]
Thus, we extend Definitions \ref{defn:tsNinfty} and \ref{defn:Tadm}.

\begin{defn}\label{defn:tsO} Suppose $\s{O}$ is an operad in $N\t{-}\b{Op}^G$ or $\b{Op}^G_+$. Define a binary relation $\to_{\s{O}}$  on $\b{Sub}(G)$ by
	\[
	K \to_{\s{O}} H	\quad\t{if and only if}\quad		K \subset H \t{ and } \s{O}(\abs{H:K})^{\Gamma(H/K)} \neq \varnothing,
	\]
where $\Gamma(H/K)$ is the graph of a chosen permutation representation of $H/K$.

Similarly, for any subgroup $H \subset G$ and finite $H$-set $T$, we say that \emph{$\s{O}$ admits $T$} if $\s{O}(\abs{T})^{\Gamma(T)} \neq \varnothing$, and we write $A(\s{O})$ for the class of admissible sets of $\s{O}$.
\end{defn}

\begin{cor} For any operad $\s{O}$ in $N\t{-}\b{Op}^G$ or $\b{Op}^G_+$, the relation $\to_{\s{O}}$ is a transfer system, the class $A(\s{O})$ is an indexing system, and the functors
	\[
	\b{Op}^G_+ \stackrel{A}{\longrightarrow} \b{Ind}(G) \cong \b{Tr}(G) \quad\t{and}\quad N\t{-}\b{Op}^G \stackrel{A}{\longrightarrow} \b{Ind}(G) \cong \b{Tr}(G)
	\]
induce equivalences $\t{Ho}(\b{Op}^G_+) \simeq \b{Ind}(G) \cong \b{Tr}(G) \simeq \t{Ho}(N\t{-}\b{Op}^G)$ of $1$-categories.
\end{cor}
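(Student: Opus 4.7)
The plan is to reduce everything to the corresponding statements for $N_\infty$ operads by exploiting the chain of equivalences
\[
\abs{EQ\s{O}(n)}^\Gamma \neq \varnothing \iff \s{O}(n)^\Gamma \neq \varnothing \iff \abs{E\s{O}(n)}^\Gamma \neq \varnothing
\]
that is displayed immediately before the corollary. This identity says that the combinatorial and topological definitions of $\to_\s{O}$ and $A(\s{O})$ agree: for $\s{O} \in N\t{-}\b{Op}^G$ we have $\to_\s{O} \,=\, \to_{\bb{E}\s{O}}$ and $A(\s{O}) = A(\bb{E}\s{O})$, while for $\s{O} \in \b{Op}^G_+$ we have $\to_\s{O} \,=\, \to_{\bb{LE}\s{O}}$ and $A(\s{O}) = A(\bb{LE}\s{O})$. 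Since $\bb{E}\s{O}$ (respectively $\bb{LE}\s{O}$) is a $N_\infty$ operad by the discussion preceding Theorem \ref{thm:Nopmod} (respectively by Theorem \ref{thm:modcat}(2)), the axioms of a transfer system and an indexing system for $\to_\s{O}$ and $A(\s{O})$ follow from the corresponding facts in the $N_\infty$ setting recalled in \S\ref{subsec:backts}.

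Next I would verify functoriality. If $\vp : \s{O} \to \s{P}$ is a morphism in $N\t{-}\b{Op}^G$ or $\b{Op}^G_+$ and $f \in \s{O}(n)^\Gamma$, then $\vp(f) \in \s{P}(n)^\Gamma$, so $\to_\s{O} \,\subseteq\, \to_\s{P}$ and $A(\s{O}) \subseteq A(\s{P})$, which makes $\to_\bullet$ and $A$ functorial into $\b{Tr}(G)$ and $\b{Ind}(G)$. Moreover, if $\vp$ is a weak equivalence in the sense of \S\ref{subsec:combops}, then $\s{O}(n)^\Gamma$ is nonempty exactly when $\s{P}(n)^\Gamma$ is, so $\to_\s{O} \,=\, \to_\s{P}$ and $A(\s{O}) = A(\s{P})$; hence both functors descend to the homotopy categories.

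Finally, for the equivalences of $1$-categories, I would assemble the cited results. Theorem \ref{thm:modcat} gives a DK equivalence $\bb{LE} : \b{Op}^G_+ \to N_\infty\t{-}\b{Op}^G$ between hammock localizations, and Theorem \ref{thm:Nopmod} gives the analogous DK equivalence for $\bb{E} : N\t{-}\b{Op}^G \to N_\infty\t{-}\b{Op}^G$. Passing to homotopy categories and composing with the equivalence $A : \t{Ho}(N_\infty\t{-}\b{Op}^G) \simeq \b{Ind}(G) \cong \b{Tr}(G)$ of Theorem \ref{thm:classify}, together with the identifications $A(\s{O}) = A(\bb{E}\s{O}) = A(\bb{LE}\s{O})$ established in the first paragraph, yields the desired equivalences $\t{Ho}(\b{Op}^G_+) \simeq \b{Ind}(G) \cong \b{Tr}(G) \simeq \t{Ho}(N\t{-}\b{Op}^G)$, effected by the functor $A$ in each case. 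The only subtle point is confirming that the resulting triangle of functors commutes, but this is precisely what the displayed equivalence guarantees, so no further work is required.
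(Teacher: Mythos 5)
Your proposal is correct and follows essentially the same route as the paper: both exploit the displayed chain of equivalences $\abs{EQ\s{O}(n)}^\Gamma \neq \varnothing \iff \s{O}(n)^\Gamma \neq \varnothing \iff \abs{E\s{O}(n)}^\Gamma \neq \varnothing$ to identify the combinatorial $\to_\s{O}$ and $A(\s{O})$ with their topological counterparts $\to_{\abs{EQ\s{O}}}$, $\to_{\abs{E\s{O}}}$ (etc.), and then invoke Theorems \ref{thm:classify}, \ref{thm:Nopmod}, and \ref{thm:modcat} to assemble the equivalences of homotopy categories. You spell out the functoriality and weak-equivalence-invariance checks that the paper leaves implicit, but the underlying argument is the same.
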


\begin{proof} The relations $\to_{\abs{EQ\s{O}}}$, $\to_{\s{O}}$, and $\to_{\abs{E\s{O}}}$ are equal, and the classes $A(\abs{EQ\s{O}})$, $A(\s{O})$, and $A(\abs{E\s{O}})$ are also equal.
\end{proof}

\section{Algebraic meets and joins}\label{sec:meetjoin}

In this brief section, we record how to compute meets and joins of transfer systems, and then we give a few examples. One could also work on the level of indexing systems, but this makes the mathematics more complicated. For any $G$-indexing systems $\c{I}$ and $\c{J}$, the meet $\c{I} \land \c{J}$ is just the intersection $\c{I} \cap \c{J}$, but the join $\c{I} \lor \c{J}$ is the indexing system generated the union $\c{I} \cup \c{J}$. It can be obtained by closing up $\c{I} \cup \c{J}$ under coproducts and self-induction, but this description is somewhat inexplicit.

In contrast, there is a simple formula for the join of $G$-transfer systems. It says that the join $\to \lor \rightsquigarrow$ is obtained by composing the transfers in $\to$ and $\rightsquigarrow$.

\begin{prop}\label{prop:lattr} Suppose that $G$ is a finite group, and that $\to$ and $\rightsquigarrow$ are $G$-transfer systems. Then:
	\begin{enumerate}
		\item{}the meet $\to \land \rightsquigarrow$ is the intersection $\to \cap \rightsquigarrow$, and
		\item{}the join $\to \lor \rightsquigarrow$ is the transitive closure of $\to \cup \rightsquigarrow$.
	\end{enumerate}
\end{prop}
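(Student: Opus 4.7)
The plan is to verify both claims directly from the axioms for a $G$-transfer system, treating (1) as a warm-up and focusing on (2).

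For part (1), I would first check that $\to \cap \rightsquigarrow$ is a $G$-transfer system: each of the three axioms (refining inclusion, closure under conjugation, closure under restriction) is preserved by intersection, as is reflexivity, transitivity, and antisymmetry. Once this is done, it is formal that $\to \cap \rightsquigarrow$ is the greatest common coarsening in $\b{Tr}(G)$, because any transfer system refined by both $\to$ and $\rightsquigarrow$ must be contained in their intersection set-theoretically.

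For part (2), write $\Rightarrow$ for the transitive closure of $\to \cup \rightsquigarrow$. The main task is to show $\Rightarrow$ is itself a $G$-transfer system; once that is known, any transfer system containing both $\to$ and $\rightsquigarrow$ must be transitive and hence contain $\Rightarrow$, so $\Rightarrow$ is the join. To see $\Rightarrow$ is a transfer system: reflexivity comes from the two input relations, transitivity is automatic, and antisymmetry follows because if $K \Rightarrow H \Rightarrow K$ then chaining edges (each of which refines inclusion) gives $K \subset H \subset K$. Closure under conjugation is immediate from closure of $\to$ and $\rightsquigarrow$, applied edge by edge.

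The one step requiring real work is closure under restriction. I would prove by induction on the length $n$ of a zig-zag $K = K_0 \,\#_1\, K_1 \,\#_2\, \cdots \,\#_n\, K_n = H$, where each $\#_i$ is either $\to$ or $\rightsquigarrow$, the following statement: for every $L \subset H$, one has $L \cap K \Rightarrow L$. The base case $n=1$ is restriction for $\to$ or $\rightsquigarrow$. For the inductive step, apply restriction to the last edge $K_{n-1} \,\#_n\, K_n = H$ with $L \subset H$ to obtain $L \cap K_{n-1} \,\#_n\, L$. Since $K_0 \subset K_{n-1}$ (each edge refines inclusion), we have $L \cap K_{n-1} \subset K_{n-1}$, so by the inductive hypothesis applied to the shorter zig-zag $K_0 \,\#_1\, \cdots \,\#_{n-1}\, K_{n-1}$ and the subgroup $L \cap K_{n-1}$, we get $(L \cap K_{n-1}) \cap K_0 \Rightarrow L \cap K_{n-1}$. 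But $(L \cap K_{n-1}) \cap K_0 = L \cap K_0 = L \cap K$, so concatenating yields $L \cap K \Rightarrow L$, completing the induction.

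The only mild obstacle is bookkeeping in that induction, specifically making sure the intermediate subgroup $L \cap K_{n-1}$ stays inside $K_{n-1}$ so the hypothesis applies, and that the final intersections collapse correctly using the inclusion $K_0 \subset K_{n-1}$. Everything else is formal. I would close by remarking that this formula for the join is the reason transfer systems are much easier to work with than indexing systems in lattice-theoretic arguments.
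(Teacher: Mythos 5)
Your proof is correct. Where it diverges from the paper is in how part (2) is established: the paper invokes a general structural theorem (\cite[Theorem B.2]{RubChar}) which says that the transfer system generated by any relation $R$ refining inclusion is obtained by first closing $R$ under conjugation and restriction, and then taking the reflexive and transitive closure; the proof then simply observes that $\to \cup \rightsquigarrow$ is already reflexive and closed under conjugation and restriction, so only the transitive closure is needed. You instead verify directly, without any black box, that the transitive closure $\Rightarrow$ of $\to \cup \rightsquigarrow$ is a transfer system, and the induction on zig-zag length you carry out to prove closure under restriction is precisely the nontrivial content that the cited theorem encapsulates in this instance. Your induction is sound: the inequality $K_0 \subset K_{n-1}$ (each edge refines inclusion) is exactly what makes $(L \cap K_{n-1}) \cap K_0 = L \cap K_0$, so the intersections collapse as you claim. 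The tradeoff is the usual one: the paper's proof is shorter but relies on an external lemma that itself requires an argument of the kind you gave; yours is self-contained and makes the inductive mechanism visible, at the cost of a bit more bookkeeping.
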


\begin{proof} For (1), note that an intersection of transfer systems is still a transfer system. The same is not true for unions, and therefore $\to \lor \rightsquigarrow$ is the least transfer system that contains the union of $\to$ and $\rightsquigarrow$. Denote it $\la \to \cup \rightsquigarrow \ra$. By \cite[Theorem B.2]{RubChar}, the relation $\la \to \cup \rightsquigarrow \ra$ can be obtained by closing up $\to \cup \rightsquigarrow$ under conjugation and restriction, and then passing to the reflexive and transitive closure. However, the relation $\to \cup \rightsquigarrow$ is already closed under conjugation and restriction, and it is already reflexive.
\end{proof}

We illustrate these operations below.

\begin{ex} Suppose first that $G = C_{p^3}$ for a prime $p$. The subgroup lattice of $G$ is the tower $C_1 \subset C_p \subset C_{p^2} \subset C_{p^3}$, and the lattice $\b{Tr}(C_{p^3})$ of all $C_{p^3}$-transfer systems is isomorphic to the associahedron $K_5$ (cf. \cite{BBR}). Here are a few meets and joins in $\b{Tr}(C_{p^3})$.
	\begin{align*}
		\cpppb \land \cppph = \cpppa	\quad&\quad	\cpppc \land \cpppl = \cpppb \quad\quad \cpppg \land \cpppm = \cpppf	\\
		\cpppb \lor \cppph = \cpppi	\quad&\quad	\cpppc \lor \cpppl = \cpppm \quad\quad \cpppg \lor \cpppm = \cpppn
	\end{align*}

Next, suppose that $G = K_4$ is the Klein four group. Then $G$ has three proper, nontrivial subgroups of order 2, which are pairwise incomparable. The lattice $\b{Tr}(K_4)$ consists of a pair of stacked $3$-cubes, plus a layer of three vertices connecting them (cf. \cite{RubChar}). Here are a few meets and joins in $\b{Tr}(K_4)$.
	\begin{align*}
		\kindabc \land \kindkc = \kindab	\quad&\quad	\kindkct \land \kindkat = \kindkt	\quad\quad	\kindb \land \kindkb = \kindt	\\
		\kindabc \lor \kindkc = \kindkct		\quad&\quad	\kindkct \lor \kindkat = \kindkac	\quad\quad	\kindb \lor \kindkb = \kindkbt
	\end{align*}
\end{ex}

We have complete knowledge of the lattice $\b{Tr}(G)$ when $G = C_{p^3}$ or $K_4$, and therefore these meets and joins may be determined by inspection. In general, the lattice $\b{Tr}(G)$ can be quite intricate, but the formulas in Proposition \ref{prop:lattr} work regardless.

\section{Operadic products, coproducts, and tensor products}\label{sec:prodcopten}

We now relate meets and joins of indexing systems to products, coproducts, and Boardman-Vogt tensor products of operads in $\b{Set}^G$. The case for products is straightforward and was analyzed in \cite{BH}, but the cases for coproducts and tensor products are less so. We begin by recalling the correspondence between products of $N$ operads and meets of indexing systems (Proposition \ref{prop:Nopprod}), and then we show that coproducts and tensor products of $N$ operads correspond to joins of indexing systems, under suitable cofibrancy conditions (Theorems \ref{thm:Ncoprjoin} and \ref{thm:NopBV}). We briefly describe the situation for marked operads in \S\ref{subsec:markoplat}, and then in \S\ref{subsec:topprodcopten}, we discuss how these discrete constructions translate over to topology.

\subsection{Constructions on $N$ operads} We start by lifting meets and joins to the level of $N$ operads. It is more natural to work with indexing systems instead of transfer systems in this context, but the identity $\to_{A(\s{O})} \,\, = \,\, \to_{\s{O}}$ allows us to convert between the two formalisms. 

\begin{prop}[\protect{\cite[Proposition 5.1]{BH}}]\label{prop:Nopprod} If $\s{O}$ and $\s{P}$ are $N$ operads, then their product $\s{O} \times \s{P}$ in $\b{Op}^G$ is a $N$ operad, and $A(\s{O} \times \s{P}) \,\, = \,\, A(\s{O}) \land A(\s{P})$.
\end{prop}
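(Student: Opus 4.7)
The plan is to unpack what a product of operads actually looks like, and then to verify each required property levelwise. Since the forgetful functor $\b{Op}^G \to \b{Sym}^G$ creates limits, the product $\s{O}\times\s{P}$ is computed pointwise: $(\s{O}\times\s{P})(n) = \s{O}(n)\times\s{P}(n)$ as $(G\times\Sigma_n)$-sets, with coordinate-wise composition and unit. Once this is in hand, the work reduces to a fixed-point calculation in $\b{Set}^{G\times\Sigma_n}$.

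First I would check that $\s{O}\times\s{P}$ is a $N$ operad. For $\Sigma_n$-freeness, the projection $\s{O}(n)\times\s{P}(n)\to\s{O}(n)$ is $\Sigma_n$-equivariant, and a nontrivial stabilizer in the product would project to a nontrivial stabilizer in $\s{O}(n)$, contradicting the freeness of $\s{O}(n)$. For the existence of $G$-fixed operations, fixed points commute with products, so
\[
(\s{O}(n)\times\s{P}(n))^G = \s{O}(n)^G \times \s{P}(n)^G,
\]
which is nonempty whenever both factors are.

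Next I would identify the associated indexing system. For any subgroup $H\subset G$, finite $H$-set $T$ with $\abs{T}=n$, and graph subgroup $\Gamma(T)\subset G\times\Sigma_n$, the same commutation of fixed points with products gives
\[
(\s{O}\times\s{P})(n)^{\Gamma(T)} = \s{O}(n)^{\Gamma(T)} \times \s{P}(n)^{\Gamma(T)},
\]
which is nonempty precisely when both factors are nonempty. Therefore $T$ is admissible for $\s{O}\times\s{P}$ if and only if $T$ is admissible for both $\s{O}$ and $\s{P}$, so $A(\s{O}\times\s{P})(H) = A(\s{O})(H)\cap A(\s{P})(H)$ for every $H$, and hence $A(\s{O}\times\s{P}) = A(\s{O})\cap A(\s{P}) = A(\s{O})\land A(\s{P})$ in $\b{Ind}(G)$.

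There is no real obstacle: the argument is essentially formal once one knows that products in $\b{Op}^G$ are levelwise and that $\Gamma$-fixed points preserve products. This is why the proposition is much easier than its coproduct and tensor-product analogues, which are the content of Theorems \ref{thm:Ncoprjoin} and \ref{thm:NopBV}.
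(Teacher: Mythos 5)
Your proposal is correct and matches the paper's proof: both rely on products in $\b{Op}^G$ being computed levelwise and on $\Gamma$-fixed points commuting with products, from which $\Sigma$-freeness, the existence of $G$-fixed operations, and the identification of admissibles all follow immediately. You simply spell out more of the routine details (e.g.\ the $\Sigma$-freeness check via the projection) that the paper leaves implicit.
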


\begin{proof} Products in $\b{Op}^G$ are computed levelwise, and therefore
	\[
	(\s{O} \times \s{P})(n)^\Xi \cong \s{O}(n)^\Xi \times \s{P}(n)^\Xi
	\]
for every $n \geq 0$ and subgroup $\Xi \subset G \times \Sigma_n$. The left hand side is nonempty if and only if both factors on the right hand side are, and the result follows.
\end{proof}

\begin{cor} For any $\s{O} \in N\t{-}\b{Op}^G$, the functor $\s{O} \times (-) : N\t{-}\b{Op}^G \to N\t{-}\b{Op}^G$ preserves weak equivalences.
\end{cor}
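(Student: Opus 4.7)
The plan is straightforward and mirrors the argument for Proposition \ref{prop:Nopprod}. The key observation is that weak equivalences in $N\t{-}\b{Op}^G$ can be characterized entirely in terms of nonemptiness of fixed-point sets: a morphism $\vp : \s{P} \to \s{Q}$ of $N$ operads is a weak equivalence if and only if $\s{P}(n)^\Gamma \neq \varnothing \iff \s{Q}(n)^\Gamma \neq \varnothing$ for every $n \geq 0$ and every graph subgroup $\Gamma \subset G \times \Sigma_n$. One direction of this biconditional is the definition unwound via the fact that $|EX|$ is contractible when $X$ is nonempty and empty when $X$ is empty; the reverse direction follows from the mere existence of $\vp$, which pushes nonempty fixed-point sets to nonempty fixed-point sets.

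Next I would apply Proposition \ref{prop:Nopprod} to see that $\s{O} \times \s{P}$ and $\s{O} \times \s{Q}$ are both $N$ operads, so the functor $\s{O} \times (-)$ indeed lands in $N\t{-}\b{Op}^G$. Because products in $\b{Op}^G$ are computed levelwise, we have $(\s{O} \times \s{P})(n)^\Gamma \cong \s{O}(n)^\Gamma \times \s{P}(n)^\Gamma$ and likewise for $\s{Q}$, so $(\s{O} \times \s{P})(n)^\Gamma$ is nonempty precisely when both $\s{O}(n)^\Gamma$ and $\s{P}(n)^\Gamma$ are nonempty.

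Combining these remarks, if $\vp : \s{P} \to \s{Q}$ is a weak equivalence, then $\s{P}(n)^\Gamma$ and $\s{Q}(n)^\Gamma$ have the same nonemptiness pattern across all $n$ and $\Gamma$, so by the levelwise product formula the same holds for $(\s{O} \times \s{P})(n)^\Gamma$ and $(\s{O} \times \s{Q})(n)^\Gamma$, and hence $\s{O} \times \vp$ is a weak equivalence. There is no real obstacle: the corollary is essentially immediate from Proposition \ref{prop:Nopprod} together with the nonemptiness criterion for weak equivalences, and no cofibrancy or resolution arguments are needed here because we are working directly with $N$ operads rather than with the full model category $\b{Op}^G_+$.
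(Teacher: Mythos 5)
Your proof is correct and matches the intended (and essentially unique) argument: since the paper gives no explicit proof of this corollary, it is clearly meant to follow at once from the biconditional nonemptiness characterization of weak equivalences together with the levelwise isomorphism $(\s{O} \times \s{P})(n)^\Gamma \cong \s{O}(n)^\Gamma \times \s{P}(n)^\Gamma$. Nothing is missing and no cofibrancy hypotheses are required, exactly as you say.
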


We have the following consistency check.

\begin{ex}\label{ex:coindlat} One standard construction of $N$ operads proceeds by coinducing the associativity operad $\b{As}$ in $\b{Set}$ up to a $G$-operad. Explicitly, if $X$ is a nonempty, right $G$-set, then $\b{Set}(X,\b{As})$ is a $N$ operad in $\b{Set}^G$. The operad $\b{Set}(X,\b{As})$ admits a finite $H$-set $T$ if and only if every $h \in H$ that fixes an element of $X$ acts as the identity on all of $T$ \cite[Proposition 3.7]{RubComb}. In particular, the $N$ operad $\b{Set}(G,\b{As})$ admits all finite $H$-sets for all subgroups $H \subset G$. It is isomorphic to the object operad of the $G$-Barratt-Eccles operad $\s{P}_G$ (cf. \cite{GM}).

For any nonempty, right $G$-sets $X$ and $Y$, there is an isomorphism
	\[
	\b{Set}(X,\b{As}) \times \b{Set}(Y,\b{As}) \cong \b{Set}(X \sqcup Y,\b{As}),
	\]
and the equality $A(\b{Set}(X \sqcup Y,\b{As})) = A(\b{Set}(X,\b{As})) \land A(\b{Set}(Y,\b{As}))$ follows from the admissibility criterion above and the fact that $g \in G$ fixes an element of $X \sqcup Y$ if and only if it fixes an element of $X$ or it fixes an element of $Y$.
\end{ex}

A dual result relates operadic coproducts to joins of indexing systems, but it is harder. Our proof relies on a presentation of the coproduct operad, which is analogous to the usual presentation for the coproduct of nonabelian groups. We refer the reader to \cite[\S\S6--8]{RubComb} for further discussion of free and quotient operads.

\begin{thm}\label{thm:Ncoprjoin} If $\s{O}$ and $\s{P}$ are $N$ operads, then their coproduct $\s{O} * \s{P}$ in $\b{Op}^G$ is also a $N$ operad, and $A( \s{O} * \s{P}) \,\, = \,\, A(\s{O}) \lor A(\s{P})$.
\end{thm}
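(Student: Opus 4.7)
The plan is to present $\s{O} * \s{P}$ explicitly via the tree/word-problem machinery promised in Appendix \ref{app:quotop}, and then to read off both the $N$-operad properties and the admissibility equation from a normal-form description of its operations. Concretely, I would realize $\s{O} * \s{P}$ as the quotient of the free operad $F(U\s{O} \sqcup U\s{P})$ by the congruence generated by the operadic composition relations inside each of $U\s{O}$ and $U\s{P}$. Every operation in the coproduct is then represented by a rooted tree whose internal vertices are alternately labeled by operations of $\s{O}$ and $\s{P}$ (after collapsing any edge whose endpoints lie in the same operad, using that operad's composition). The symmetric group action permutes leaves and the diagonal $G$-action acts on vertex labels.

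Using this presentation, $\Sigma$-freeness of $(\s{O} * \s{P})(n)$ follows because a normal-form tree determines its leaf labeling uniquely, so no non-identity $\sigma \in \Sigma_n$ can fix a class $[\tau]$ without fixing the underlying planar tree and forcing a nontrivial $\Sigma_m$-stabilizer at some vertex with label in $\s{O}(m)$ or $\s{P}(m)$, contradicting the $\Sigma$-freeness of $\s{O}$ and $\s{P}$. The existence of $G$-fixed operations in every arity follows by iterated composition of $G$-fixed operations of $\s{O}$ (and $\s{P}$). Hence $\s{O} * \s{P}$ is a $N$ operad. The inclusion $A(\s{O}) \lor A(\s{P}) \subseteq A(\s{O}*\s{P})$ is then formal: the structure maps $\s{O} \to \s{O}*\s{P}$ and $\s{P} \to \s{O}*\s{P}$ are operad morphisms, so $A(\s{O})$ and $A(\s{P})$ are both contained in the indexing system $A(\s{O}*\s{P})$, and therefore so is their join.

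For the reverse inclusion, suppose $T$ is an $H$-set admissible for $\s{O} * \s{P}$, witnessed by some $f \in (\s{O} * \s{P})(|T|)^{\Gamma(T)}$, and pick a normal-form tree representative $\tau$ for $f$. The stabilizer $\Gamma(T)$ acts on $\tau$, and by uniqueness of normal form this action must preserve the underlying tree and the labels at each vertex, up to each label's own stabilizer. Decomposing $\tau$ along its internal edges then exhibits $T$ as an iterated self-induction of $H$-sets, each of which appears as an admissible set for the operad ($\s{O}$ or $\s{P}$) that labels the corresponding vertex. Since the indexing system join is closed under self-induction and coproducts, this places $T$ in $A(\s{O}) \lor A(\s{P})$.

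The main obstacle is the normal-form step: one must know that the equivalence relation defining $\s{O} * \s{P}$ is sufficiently rigid that a $\Gamma(T)$-fixed class has a $\Gamma(T)$-fixed representative with controllable behavior on each vertex. This is exactly the kind of coherence problem that the quotient-operad machinery of Appendix \ref{app:quotop} is designed to address, and I expect most of the work to go into invoking that machinery carefully to produce the normal form and to verify that the $H$-action on leaves descends to the $H$-action on vertices in the expected way.
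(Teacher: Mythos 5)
Your proposal is correct in outline and relies on the same underlying presentation of $\s{O} * \s{P}$ (the word-problem / normal-form description established in Example-Lemma \ref{exlem:Nopcop}), but it takes a noticeably harder route to the reverse inclusion $A(\s{O} * \s{P}) \subseteq A(\s{O}) \lor A(\s{P})$. You propose to take a $\Gamma(T)$-fixed operation, choose its normal-form tree $\tau$, analyze the $\Gamma(T)$-action on $\tau$, and decompose $\tau$ along its internal edges to exhibit $T$ as an iterated self-induction of admissible sets of $\s{O}$ and $\s{P}$. That is essentially re-deriving the formula $A(F(S)) = \la A(S) \ra$ for free operads (\cite[Theorem 5.6]{RubComb}) by hand, and — as you yourself flag — the fixed-point bookkeeping (how $\Gamma(T)$ acts on a tree, how the $H$-action on leaves descends to vertex stabilizers, and why the resulting decomposition only involves admissible orbits of $\s{O}$ and $\s{P}$) is where the real technical work lives. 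The paper sidesteps all of this: once Example-Lemma \ref{exlem:Nopcop} gives a symmetric-sequence inclusion $U(\s{O} * \s{P}) \hookrightarrow UF(U\s{O} \sqcup U\s{P})$, one gets immediately (a) $\Sigma$-freeness, since a sub-symmetric sequence of a $\Sigma$-free one is $\Sigma$-free (no planar-tree argument needed); and (b) $A(\s{O} * \s{P}) \subseteq A(F(U\s{O} \sqcup U\s{P}))$, since admissibility is a levelwise nonemptiness condition and therefore monotone along symmetric-sequence maps. The known formula for admissible sets of free operads then gives $A(F(U\s{O} \sqcup U\s{P})) = \la A(\s{O}) \ra \lor \la A(\s{P}) \ra = A(\s{O}) \lor A(\s{P})$ with no further fixed-point analysis. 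Your forward inclusion via the structure maps $\s{O} \to \s{O} * \s{P} \leftarrow \s{P}$ matches the paper exactly. The upshot: your approach would work if carried out carefully, but the paper's monotonicity-plus-free-operad-formula trick makes the ``controllable behavior on each vertex'' issue you worry about entirely unnecessary.
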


We single out a special case before going into the proof.

\begin{ex}\label{ex:admcoprfree} Suppose $S$ is a $\Sigma$-free symmetric sequence in $\b{Set}^G$ such that $S(n)^G \neq \varnothing$ for $n = 0,2$. Now let $\s{O} = F(S)$ be the free operad on $S$. By \cite[Theorem 5.6]{RubComb}, the class $A(\s{O})$ is the indexing system generated by $A(S)$.

Now suppose $T$ is another such a symmetric sequence, and let $\s{P} = F(T)$. Then $\s{O} * \s{P} \cong F(S \sqcup T)$, and therefore
	\[
	A(\s{O} * \s{P}) = \la A(S \sqcup T) \ra = \la A(S) \cup A(T) \ra = \la A(S) \ra \lor \la A(T) \ra = A(\s{O}) \lor A(\s{P}).
	\]
Therefore Theorem \ref{thm:Ncoprjoin} is true in this case.
\end{ex}

To prove the general case, we reduce to the calculation for frees.

\begin{proof}[Proof of Theorem \ref{thm:Ncoprjoin}] Let $F : \b{Sym}^G \leftrightarrows \b{Op}^G : U$ be the free-forgetful adjunction. In Example-Lemma \ref{exlem:Nopcop}, we prove that $\s{O} * \s{P}$ is isomorphic to a sub-symmetric sequence of $F(U\s{O} \sqcup U\s{P})$ equipped with a modified operadic composition. It follows that $\s{O} * \s{P}$ is $\Sigma$-free, and since there is an operad map $\s{O} \to \s{O} * \s{P}$, it also follows that $(\s{O} * \s{P})(n)^G \neq \varnothing$ for all $n \geq 0$. Therefore $\s{O} * \s{P}$ is a $N$ operad.

The structure maps $\s{O} \to \s{O} * \s{P} \leftarrow \s{P}$ imply that $A(\s{O}) \subset A(\s{O} * \s{P}) \supset A(\s{P})$, and therefore $A(\s{O}) \lor A(\s{P}) \subset A(\s{O} * \s{P})$. On the other hand, the symmetric sequence map $U(\s{O} * \s{P}) \to UF(U\s{O} \sqcup U\s{P})$ implies $A(\s{O} * \s{P}) \subset A(F(U\s{O} \sqcup U\s{P}))$. As in Example \ref{ex:admcoprfree}, we have
	\[
	A(F(U\s{O} \sqcup U\s{P})) = \la A(U\s{O} \sqcup U\s{P}) \ra = \la A(\s{O}) \ra \lor \la A(\s{P}) \ra ,
	\]
and this equals $A(\s{O}) \lor A(\s{P})$ because $A(\s{O})$ and $A(\s{P})$ are already indexing systems. This proves the theorem.
\end{proof}

\begin{cor} For any $\s{O} \in N\t{-}\b{Op}^G$, the functor $\s{O} * (-) : N\t{-}\b{Op}^G \to N\t{-}\b{Op}^G$ preserves weak equivalences.
\end{cor}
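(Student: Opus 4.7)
The plan is to deduce this immediately from Theorem \ref{thm:Ncoprjoin}, using the characterization of weak equivalences in $N\t{-}\b{Op}^G$ in terms of admissibility. Unpacking the definition of weak equivalence from \S\ref{subsec:combops}, a morphism $\vp : \s{P} \to \s{Q}$ of $N$ operads is a weak equivalence if and only if $\s{P}(n)^\Gamma \neq \varnothing$ whenever $\s{Q}(n)^\Gamma \neq \varnothing$ for every $n \geq 0$ and graph subgroup $\Gamma \subset G \times \Sigma_n$; since the reverse implication is automatic from the existence of $\vp$, this is equivalent to the equality $A(\s{P}) = A(\s{Q})$.

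With this translation in hand, I would proceed as follows. Functoriality of the operadic coproduct produces an induced morphism $\t{id}_{\s{O}} * \vp : \s{O} * \s{P} \to \s{O} * \s{Q}$, and by Theorem \ref{thm:Ncoprjoin} both the source and target are $N$ operads, so this morphism lives in $N\t{-}\b{Op}^G$. Applying the formula from Theorem \ref{thm:Ncoprjoin} twice and using $A(\s{P}) = A(\s{Q})$, we get
\[
A(\s{O} * \s{P}) = A(\s{O}) \lor A(\s{P}) = A(\s{O}) \lor A(\s{Q}) = A(\s{O} * \s{Q}).
\]
Hence $\t{id}_{\s{O}} * \vp$ is a weak equivalence.

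There is no real obstacle here; the corollary is a direct repackaging of Theorem \ref{thm:Ncoprjoin}. The only subtlety worth flagging is that one must invoke the first half of that theorem (that $\s{O} * \s{P}$ is again a $N$ operad) to ensure that the induced morphism is indeed a morphism in $N\t{-}\b{Op}^G$, so that asking whether it is a weak equivalence makes sense in the first place.
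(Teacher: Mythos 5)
Your proof is correct and is precisely the intended deduction: the paper gives no explicit proof of the corollary, treating it as an immediate consequence of Theorem \ref{thm:Ncoprjoin} via the characterization of weak equivalences as morphisms inducing an equality of admissible sets. Your observation that one must first invoke the theorem to ensure $\s{O} * \s{P}$ and $\s{O} * \s{Q}$ lie in $N\t{-}\b{Op}^G$ is the right thing to flag.
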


We now consider Boardman-Vogt tensor products $\s{O} \otimes \s{P}$ of operads. Recall that $\s{O} \otimes \s{P}$ is the quotient of the coproduct $\s{O} * \s{P}$ operad by the vertical-horizontal interchange relations
	\[
	h(f(x_{11},\dots,x_{1n}),\dots,f(x_{m1},\dots,x_{mn})) \sim f(h(x_{11},\dots,x_{m1}),\dots,h(x_{1n},\dots,x_{mn})) ,
	\]
for all $h \in \s{O}(m)$ and $f \in \s{P}(n)$. When $m = n = 2$, we recover the usual formula from the Eckmann-Hilton argument. More formally, we start with the coproduct $i : \s{O} \to \s{O} * \s{P} \leftarrow \s{P} : j$, and then take the quotient by the congruence relation generated by
	\[
	\gamma(i(h) ; j(f) , \dots , j(f) ) \sim \gamma( j(f) ; i(h) , \dots , i(h) ) \sigma	,
	\]
where $h \in \s{O}(m)$, $f \in \s{P}(n)$, and $\sigma$ is the permutation that reorders $mn$ elements in reverse lexicographic order. Nullary interchanges are allowed. If $f \in \s{P}(0)$, then $\gamma(i(h) ; j(f) ,\dots , j(f)) \sim j(f)$, and therefore $i(h) \sim j(f)$ if $h \in \s{O}(0)$ as well. It follows that the operad $\s{O} \otimes \s{P}$ is reduced if both $\s{O}(0)$ and $\s{P}(0)$ are nonempty.

The tensor product of $N$ operads is not generally a $N$ operad. For example, if $G$ is the trivial group and $\s{O} = \s{P} = \b{As}$ is the associativity operad, then $\s{O} \otimes \s{P}$ is isomorphic to the commutativity operad \cite[Proposition 3.8]{FiedVogt}. However, the tensor product does behave well for suitably free $N$ operads. We introduce some terminology.

\begin{defn}A $N$ operad $\s{O}$ is \emph{cofibrant} if it is a retract of free operad in $\b{Op}^G$.
\end{defn}

This terminology is justified because there is a model category structure on $\b{Op}^G$ for which these are the cofibrant operads (cf. \cite[\S4.1]{RubComb}).

If a cofibrant $N$ operad $\s{O}$ is a retract of a free operad $F(S)$, then $S$ must be $\Sigma$-free because the composite $S \to F(S) \to \s{O}$ of the unit and the retraction is a map of symmetric sequences. After enlarging $S$, we may also assume that $S(n)^G \neq \varnothing$ for all $n \geq 0$, because $\s{O}(n)^G \neq \varnothing$ implies $F(S)(n)^G \neq \varnothing$, and therefore the inclusion $F(S) \hookrightarrow F(S \sqcup \coprod_{n \geq 0} \frac{G \times \Sigma_n}{G})$ has a retraction.

\begin{thm}\label{thm:NopBV} If $\s{O}$ and $\s{P}$ are cofibrant $N$ operads, then their tensor product $\s{O} \otimes \s{P}$ in $\b{Op}^G$ is a $N$ operad, and $A(\s{O} \otimes \s{P}) = A(\s{O}) \lor A(\s{P})$.
\end{thm}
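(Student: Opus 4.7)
The plan is to reduce to the case of free $N$ operads $\s{O}=F(S)$ and $\s{P}=F(T)$, and then analyze the interchange relations in $F(S) \otimes F(T)$ using the quotient operad machinery of Appendix \ref{app:quotop}. The reduction works cleanly: if $\s{O}$ is a retract of $F(S)$ via $\s{O} \xrightarrow{\iota} F(S) \xrightarrow{r} \s{O}$ with $S$ as in the discussion preceding the statement, then the composite $S \to F(S) \to \s{O}$ of symmetric sequences shows $A(S) \subseteq A(\s{O})$, and the section $\iota$ shows $A(\s{O}) \subseteq A(F(S)) = \la A(S) \ra$ (using Example \ref{ex:admcoprfree}), so $A(\s{O}) = \la A(S) \ra$; similarly $A(\s{P}) = \la A(T) \ra$. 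Functoriality of $\otimes$ makes $\s{O} \otimes \s{P}$ a retract of $F(S) \otimes F(T)$, so $\Sigma$-freeness and the upper admissibility bound transfer from the free case. Nonemptiness of each $(\s{O} \otimes \s{P})(n)^G$ and the inclusion $A(\s{O}) \lor A(\s{P}) \subseteq A(\s{O} \otimes \s{P})$ both come for free from the operad maps $\s{O}, \s{P} \to \s{O} \otimes \s{P}$. Hence it suffices to show $F(S) \otimes F(T)$ is a $N$ operad with $A(F(S) \otimes F(T)) = \la A(S) \cup A(T) \ra$.

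For the free case, Example-Lemma \ref{exlem:Nopcop} identifies $F(S) * F(T) \cong F(S \sqcup T)$, so $F(S) \otimes F(T)$ is the quotient of $F(S \sqcup T)$ by the congruence generated by interchange relations between $S$-labeled and $T$-labeled vertices. I would apply Appendix \ref{app:quotop} to extract a unique normal form for each equivalence class—for instance, a layered tree in which every $T$-vertex sits strictly above every $S$-vertex along each root-to-leaf path, obtained by repeatedly rewriting subtrees of the form $\gamma(i(h); j(f),\ldots,j(f))$ into $\gamma(j(f); i(h),\ldots,i(h))\sigma$. Once confluence and uniqueness of this normal form are verified, $\Sigma_n$-freeness of $(F(S) \otimes F(T))(n)$ follows: a nontrivial permutation of the leaves of a canonical tree would have to be absorbed into a nontrivial stabilizer of some $S$- or $T$-labeled vertex, contradicting $\Sigma$-freeness of $S$ and $T$.

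For the admissibility calculation, the inclusion $\la A(S) \cup A(T) \ra \subseteq A(F(S) \otimes F(T))$ follows from the operad maps, and the reverse inclusion is read off from the normal form: a $\Gamma(T)$-fixed operation represented by a canonical tree exhibits $T$ as built by iterated self-inductions along orbit-sets admissible for $S$ or $T$, hence lies in $\la A(S) \cup A(T) \ra$. The main obstacle is step two: confluence of the interchange rewriting is far from automatic, because the interchange relation involves the specific reverse-lexicographic reordering permutation $\sigma$ on $mn$ leaves, and any systematic rewriting must be checked to terminate and to be compatible with this permutation. Establishing this is precisely the role of Appendix \ref{app:quotop}; without it, one cannot rule out collapses of $\Sigma$-orbits such as the non-cofibrant $\b{As} \otimes \b{As} \cong \b{Com}$, nor unexpected enlargements of the admissible-set class beyond $\la A(S) \cup A(T) \ra$.
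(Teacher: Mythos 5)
Your high-level strategy mirrors the paper's: reduce to the free case $F(S)$, $F(T)$ via the retract argument, then analyze $F(S) \otimes F(T) \cong F(S \sqcup T)/\!\sim$ using the quotient machinery of Appendix~\ref{app:quotop}. The reduction step is handled correctly, including the observation that $\s{O} \otimes \s{P}$ is a retract of $F(S) \otimes F(T)$, so that $\Sigma$-freeness and admissibility transfer.

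The gap is in the free case, and it is one you flag yourself: you defer the hard part to the appendix without supplying it, and the normal form you propose is not the one the rewriting actually produces. A tree in which every $T$-vertex lies strictly above every $S$-vertex on each root-to-leaf path cannot be the canonical form, because the interchange rule $\gamma(i(h); j(f),\dots,j(f)) \sim \gamma(j(f); i(h),\dots,i(h))\sigma$ only fires when \emph{all} $m$ inputs of the $S$-vertex $h$ are filled by copies of the \emph{same} $T$-operation $f$; a term such as $h(f(x_1), f'(x_2))$ with $h \in S$ and distinct $f, f' \in T$ is already irreducible despite having $S$ over $T$. Nullary interchanges are a further hazard: every $h \in \s{O}(0)$ is identified with every $f \in \s{P}(0)$, so an unguided rewriting risks collapsing $\Sigma$-orbits in a way that is hard to control. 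Lemma~\ref{lem:freeopten} handles both issues by fixing a distinguished $z \in T(0)^G$, splitting the rewriting into rules (a)--(d) so that nullary reductions are isolated, and inventing a depth-weighted complexity function that every rule strictly decreases before checking local confluence case by case. That verification is the substance of the argument and cannot be read off from the shape of the interchange relation. Finally, you over-engineer what remains: once Lemma~\ref{lem:freeopten} identifies $F(S) \otimes F(T)$ with a sub-$G$-symmetric sequence of $F(S \sqcup T)$, both $\Sigma$-freeness and the inclusion $A(F(S)\otimes F(T)) \subset \la A(S) \cup A(T) \ra$ are immediate from the embedding, exactly as in Theorem~\ref{thm:Ncoprjoin}, with no need to analyze stabilizers of canonical trees or decompositions into self-inductions.
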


\begin{proof} Admissible sets are preserved under retracts, so it is enough to prove the result when $\s{O} = F(S)$ and $\s{P} = F(T)$ are free on $\Sigma$-free symmetric sequences $S$ and $T$ such that $S(n)^G,T(n)^G \neq \varnothing$ for all $n \geq 0$. In this case, Lemma \ref{lem:freeopten} implies that $F(S) \otimes F(T)$ is isomorphic to a sub-symmetric sequence of $F(S \sqcup T)$, equipped with a modified composition operation. From here, the same argument used in the proof of Theorem \ref{thm:Ncoprjoin} shows that $F(S) \otimes F(T)$ is a $N$ operad and that $A(F(S) \otimes F(T)) = A(F(S)) \lor A(F(T))$.
\end{proof}

\begin{cor}If $\s{O}$ is a cofibrant $N$ operad, then $\s{O} \otimes (-) : N\t{-}\b{Op}^G \to N\t{-}\b{Op}^G$ preserves weak equivalences between cofibrant $N$ operads.
\end{cor}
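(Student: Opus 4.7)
The plan is to deduce this directly from Theorem \ref{thm:NopBV}, following the same template as the corollaries to Proposition \ref{prop:Nopprod} and Theorem \ref{thm:Ncoprjoin}. The key preliminary observation is that a morphism $\vp : \s{P} \to \s{P}'$ in $N\t{-}\b{Op}^G$ is a weak equivalence precisely when $A(\s{P}) = A(\s{P}')$. Indeed, existence of $\vp$ already forces $A(\s{P}) \subset A(\s{P}')$, and the weak equivalence condition recalled in \S\ref{subsec:combops} reduces to the nonemptiness reverse implication $A(\s{P}') \subset A(\s{P})$.

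Given a weak equivalence $\vp : \s{P} \to \s{P}'$ between cofibrant $N$ operads, the plan is to apply $\s{O} \otimes (-)$ and verify that the induced map satisfies this admissibility criterion. Since $\s{O}$, $\s{P}$, and $\s{P}'$ are all cofibrant $N$ operads, Theorem \ref{thm:NopBV} guarantees that both $\s{O} \otimes \s{P}$ and $\s{O} \otimes \s{P}'$ land back in $N\t{-}\b{Op}^G$, and it computes their admissible sets as
\[
A(\s{O} \otimes \s{P}) = A(\s{O}) \lor A(\s{P}) \quad\t{and}\quad A(\s{O} \otimes \s{P}') = A(\s{O}) \lor A(\s{P}').
\]
The hypothesis $A(\s{P}) = A(\s{P}')$ forces these to coincide, whence $1 \otimes \vp$ is a weak equivalence.

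There is no genuine obstacle here, because all of the hard work has already been absorbed into Theorem \ref{thm:NopBV}: once the admissible sets of a tensor product are shown to depend only on the admissible sets of the factors, homotopy invariance on the cofibrant locus is automatic. The same formal mechanism underlies the earlier corollaries for products and coproducts, and the only reason cofibrancy appears in the hypothesis is to ensure we remain inside $N\t{-}\b{Op}^G$ after tensoring, so that the characterization of weak equivalences by admissible sets actually applies on both sides.
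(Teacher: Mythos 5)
Your proof is correct and is exactly the intended argument: a map of $N$ operads is a weak equivalence precisely when it preserves (equivalently, induces an equality of) admissible sets, and Theorem \ref{thm:NopBV} shows that $A(\s{O}\otimes\s{P})$ depends only on $A(\s{O})$ and $A(\s{P})$, so the result is immediate. The paper treats this as an immediate corollary of Theorem \ref{thm:NopBV} and does not spell out a separate proof, but your reasoning is the one it has in mind.
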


\subsection{Constructions on marked operads}\label{subsec:markoplat} Next, we briefly indicate how the structure on the lattice $\b{Ind}(G)$ of $G$-indexing systems is reflected on the level of marked operads. Recall that $\b{F}$ is the free operad in $\b{Op}^G$ on the symmetric sequence $\frac{G \times \Sigma_0}{G} \sqcup \frac{G \times \Sigma_2}{G}$, and $\b{Op}^G_+ = \b{F}/\b{Op}^G$.

The relationship between products and meets is the same as before, because limits in $\b{Op}^G_+$ are computed in $\b{Op}^G$.

\begin{prop}If $\s{O}$ and $\s{P}$ are operads in $\b{Op}^G_+$, then
	\[
	A(\s{O} \times \s{P}) \,\, = \,\, A(\s{O}) \land A(\s{P}).
	\]
Consequently, the functor $\s{O} \times (-) : \b{Op}^G_+ \to \b{Op}^G_+$ preserves weak equivalences for any operad $\s{O} \in \b{Op}^G_+$.
\end{prop}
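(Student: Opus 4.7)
The plan is to reduce directly to Proposition \ref{prop:Nopprod} via the formal observation that products in a coslice category $X/\c{C}$ are computed as products in $\c{C}$, with structure map obtained by pairing the two given maps out of $X$. Applied to $\b{Op}^G_+ = \b{F}/\b{Op}^G$, this means the product $\s{O} \times \s{P}$ in $\b{Op}^G_+$ has the same underlying $G$-operad as the levelwise product in $\b{Op}^G$, with marked unit and binary product coming diagonally from $\b{F}$. In particular, for each $n \geq 0$ and each graph subgroup $\Gamma \subset G \times \Sigma_n$, the identity $(\s{O} \times \s{P})(n)^\Gamma \cong \s{O}(n)^\Gamma \times \s{P}(n)^\Gamma$ still holds, and the argument of Proposition \ref{prop:Nopprod} yields $A(\s{O} \times \s{P}) = A(\s{O}) \cap A(\s{P}) = A(\s{O}) \land A(\s{P})$, with the last equality coming from the description of meets in $\b{Ind}(G)$ as intersections (cf. \S\ref{sec:meetjoin}).

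For the second claim, recall that weak equivalences in $\b{Op}^G_+$ are defined to be weak equivalences of the underlying $G$-operads, and these are detected by nonemptiness of $\s{P}(n)^\Gamma$ for $n \geq 0$ and graph subgroups $\Gamma$. The levelwise product formula immediately shows that if $\vp : \s{P} \to \s{P}'$ satisfies $\s{P}(n)^\Gamma \neq \varnothing \iff \s{P}'(n)^\Gamma \neq \varnothing$, then $\t{id}_\s{O} \times \vp$ does as well, so $\s{O} \times (-)$ preserves weak equivalences. I do not anticipate any real obstacles; the entire proposition is a straightforward bookkeeping exercise once one recognizes that the marked data contributes nothing to the fixed-point calculation and that the coslice product agrees with the ambient product.
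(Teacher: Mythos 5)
Your proposal is correct and takes essentially the same approach as the paper, which dispenses with this proposition in a single sentence ("limits in $\b{Op}^G_+$ are computed in $\b{Op}^G$"). You have simply made explicit the coslice-category observation that the paper leaves implicit, and then re-run the fixed-point argument from Proposition~\ref{prop:Nopprod}.
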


A dual result holds for derived coproducts in $\b{Op}^G_+$, with respect to the model structure discussed in \S\ref{subsec:combops}.

\begin{prop}\label{prop:OpG+cop} Suppose $\s{O}$ and $\s{P}$ are cofibrant operads in $\b{Op}^G_+$, and let $\s{O} *_\b{F} \s{P}$ be their coproduct in $\b{Op}^G_+$. Then $\s{O} *_{\b{F}} \s{P}$ is also cofibrant, and
	\[
	A(\s{O} *_{\b{F}} \s{P}) \,\, = \,\, A(\s{O}) \lor A(\s{P}).
	\]
Consequently, the functor $\s{O} *_{\b{F}} (-) : \b{Op}^G_+ \to \b{Op}^G_+$ preserves weak equivalences between cofibrant operads whenever $\s{O}$ is cofibrant in $\b{Op}^G_+$.
\end{prop}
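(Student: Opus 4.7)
The plan is to first establish cofibrancy by a formal model-categorical argument, and then reduce the admissibility computation to the case of cellular operads, where it follows from the free operad formula in Example \ref{ex:admcoprfree}. Cofibrancy of $\s{P}$ in $\b{Op}^G_+$ means that $\b{F} \to \s{P}$ is a cofibration; this is preserved under cobase change, so the induced map $\s{O} \to \s{O} *_\b{F} \s{P}$ is a cofibration, and composing with the cofibration $\b{F} \to \s{O}$ shows that $\s{O} *_\b{F} \s{P}$ is cofibrant in $\b{Op}^G_+$. The forward inclusion $A(\s{O}) \lor A(\s{P}) \subseteq A(\s{O} *_\b{F} \s{P})$ is immediate from the structure maps $\s{O}, \s{P} \to \s{O} *_\b{F} \s{P}$.

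For the reverse inclusion, I would exploit that cell complexes in $\b{Op}^G_+$ are built by attaching generating cofibrations along the initial object $\b{F}$, and since $F_+ : \b{Sym}^G \to \b{Op}^G_+$ preserves colimits, every cell complex is isomorphic to $F_+(S)$ for some $\Sigma$-free symmetric sequence $S$. Thus $\s{O}$ and $\s{P}$ are retracts of $F_+(S)$ and $F_+(T)$ for appropriate $\Sigma$-free $S$ and $T$, and $\s{O} *_\b{F} \s{P}$ is a retract of $F_+(S) *_\b{F} F_+(T) \cong F_+(S \sqcup T)$, where the last isomorphism again uses that $F_+$ preserves colimits. Since retracts preserve admissibility, the computation reduces to comparing $A(F_+(S \sqcup T))$ with $A(F_+(S)) \lor A(F_+(T))$. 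Both expressions equal $\la A(S) \cup A(T) \ra$ via the free operad formula in Example \ref{ex:admcoprfree}, whose hypotheses are satisfied because the underlying symmetric sequence of any $F_+(X)$ carries $G$-fixed marked generators at arities $0$ and $2$.

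For the homotopy invariance consequence, a weak equivalence $\s{Q} \to \s{Q}'$ between cofibrant operads satisfies $A(\s{Q}) = A(\s{Q}')$, so the admissibility formula just proved gives $A(\s{O} *_\b{F} \s{Q}) = A(\s{O} *_\b{F} \s{Q}')$. Both coproducts are cofibrant and therefore $N$ operads, and a map of $N$ operads inducing equality on admissibility is a weak equivalence, so the induced map $\s{O} *_\b{F} \s{Q} \to \s{O} *_\b{F} \s{Q}'$ is a weak equivalence. The main obstacle is the cellular reduction, since cofibrant objects are only retracts of cell complexes and the concrete free-operad calculation must be transferred across this retract; this is ultimately manageable because both the coproduct $\s{O} *_\b{F} (-)$ and the admissibility functor $A$ behave well under retracts, but it is the crucial step bridging the formal model-categorical setup and the concrete combinatorial formula.
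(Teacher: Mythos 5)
Your proposal is correct and follows essentially the same route as the paper: reduce to the free case via the observation that every cofibrant object of $\b{Op}^G_+$ is a retract of $F_+(S)$ for a $\Sigma$-free $S$ (since the generating cofibrations all have domain $\b{F}$), use that the left adjoint $F_+$ preserves coproducts so that $F_+(S) *_{\b{F}} F_+(T) \cong F_+(S \sqcup T)$, and then appeal to the free-operad admissibility computation of Example \ref{ex:admcoprfree}. You have simply spelled out the formal cofibrancy argument and the ``consequently'' about preservation of weak equivalences, both of which the paper leaves implicit.
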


\begin{proof} The operad $\s{O} *_{\b{F}} \s{P}$ is cofibrant for formal reasons. To compute its indexing system, note that the maps $F_+(\varnothing \to \frac{G \times \Sigma_n}{\Gamma})$ are generating cofibrations for $\b{Op}^G_+$, and therefore every cofibrant operad is a retract of a free operad $F_+(S)$ for some $\Sigma$-free symmetric sequence $S$. Indexing systems are preserved under retracts, and therefore it will suffice to prove that $A(\s{O} *_{\b{F}} \s{P}) \,\, = \,\, A(\s{O}) \lor A(\s{P})$ when $\s{O}$ and $\s{P}$ are free in this sense.

Suppose that $\s{O} = F_+(S)$ and $\s{P} = F_+(T)$ for $\Sigma$-free symmetric sequences $S$ and $T$. Then $\s{O} *_{\b{F}} \s{P} \cong F_+(S \sqcup T)$, and the identity $A(\s{O} *_{\b{F}} \s{P}) = A(\s{O}) \lor A(\s{P})$ follows as in Example \ref{ex:admcoprfree}.
\end{proof}

One can also construct Boardman-Vogt tensor products $\otimes_{\b{F}}$ in $\b{Op}^G_+$, but they are quite pathological. Morphisms in $\b{Op}^G_+$ must preserve markings, and therefore the distinguished binary operation $p$ in the tensor product $\s{O} \otimes_{\b{F}} \s{P}$ must interchange with itself, i.e. $p(p(x,y),p(z,w)) = p(p(x,z),p(y,w))$. Thus, the cycle $(23) \in \Sigma_4$ stabilizes $q = \gamma(p;p,p)$, and it follows that the operad $\s{O} \otimes_{\b{F}} \s{P}$ is never $\Sigma$-free or cofibrant. Moreover, if $g \in G$ is an element of order $2$ and $\Gamma = \{ (1,\t{id}) , (g,(23)) \}$, then $q$ is $\Gamma$-fixed, which makes it a $\la g \ra/1$-norm. In this case, taking $\s{O} = \s{P} = \b{F}$ yields the inequality $A(\s{O} \otimes_{\b{F}} \s{P}) \supsetneq A(\s{O}) \lor A(\s{P})$. For these reasons, we pursue $\otimes_{\b{F}}$ no further.

\subsection{Topological interpretations}\label{subsec:topprodcopten} We conclude this section by considering how constructions on the level of discrete $G$-operads translate into constructions for topological $N_\infty$ operads. As observed earlier, the functor $\bb{E} = \abs{\cdot} \circ E$ preserves products of operads because it preserves finite limits. Unfortunately, it does not preserve point-set level operadic coproducts or tensor products, and therefore our constructions in $N\t{-}\b{Op}^G$ and $\b{Op}^G_+$ need to be interpreted carefully.

We think of the derived coproduct and tensor product in $N\t{-}\b{Op}^G$ and $\b{Op}^G_+$ as the homotopically correct constructions, and we read off their topological counterparts via the equivalences $\t{Ho}(N\t{-}\b{Op}^G) \simeq \t{Ho}(N_\infty\t{-}\b{Op}^G) \simeq \t{Ho}(\b{Op}^G_+)$. For simplicity, we focus on the unmarked case. Let $(-)^u : N_\infty\t{-}\b{Op}^G \to N\t{-}\b{Op}^G$ be the forgetful functor that ignores all topology. This functor preserves admissible sets, and therefore it also preserves weak equivalences. It is a homotopical inverse to $\bb{E}$.

\begin{prop}\label{prop:Eu}The homotopical functors
	\[
	\bb{E} : N\t{-}\b{Op}^G \leftrightarrows N_\infty\t{-}\b{Op}^G : (-)^u
	\]
are inverse up to zig-zags of natural weak equivalences.
\end{prop}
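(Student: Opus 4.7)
The plan is to exhibit explicit natural zig-zags of weak equivalences witnessing $(-)^u \circ \bb{E} \simeq \t{id}$ on $N\t{-}\b{Op}^G$ and $\bb{E} \circ (-)^u \simeq \t{id}$ on $N_\infty\t{-}\b{Op}^G$. Since both $\bb{E}$ and $(-)^u$ preserve admissible sets, any map whose source and target agree on admissible classes is automatically a weak equivalence in its category, so the real work is producing the comparison maps naturally.

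First I would handle $(-)^u \circ \bb{E}$. Inclusion of $0$-simplices gives a natural morphism of operads in $\b{Set}^G$,
\[
\eta_{\s{O}} : \s{O} \longrightarrow (\bb{E}\s{O})^u = |E\s{O}| .
\]
For any graph subgroup $\Gamma \subset G \times \Sigma_n$, since $E$ preserves limits and geometric realization commutes with finite-group fixed points on simplicial sets, $|E\s{O}(n)|^\Gamma \cong |E(\s{O}(n)^\Gamma)|$, which is contractible exactly when $\s{O}(n)^\Gamma$ is nonempty. Thus $\eta_\s{O}$ is a natural weak equivalence in $N\t{-}\b{Op}^G$.

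Next I would handle $\bb{E} \circ (-)^u$. For a topological space $Y$, sending a singular simplex $\sigma : \Delta^q \to Y$ to the tuple of vertices $(\sigma(v_0), \dots, \sigma(v_q)) \in (Y^u)^{q+1}$ defines a natural transformation $v_Y : \t{Sing}(Y) \to E(Y^u)$ of functors $\b{Top} \to \b{sSet}$. Both $\t{Sing}$ and $E$ are right adjoints, so they preserve finite products, and the vertex map is strictly compatible with these product isomorphisms. Consequently, for any topological operad $\s{O}$ the maps $v_{\s{O}(n)}$ assemble into a map of simplicial operads $\t{Sing}(\s{O}) \to E(\s{O}^u)$, and realizing yields the natural zig-zag
\[
\s{O} \,\xleftarrow{\,\sim\,}\, |\t{Sing}(\s{O})| \,\xrightarrow{\,|v_\s{O}|\,}\, |E(\s{O}^u)| \,=\, \bb{E}(\s{O}^u) ,
\]
whose left arrow is the classical counit weak equivalence. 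To check the right arrow is a weak equivalence in $N_\infty\t{-}\b{Op}^G$, fix a graph subgroup $\Gamma \subset G \times \Sigma_n$: both $|\t{Sing}(-)|$ and $|E((-)^u)|$ commute with $(-)^\Gamma$ because $\t{Sing}$ and $E$ preserve limits and realization commutes with finite-group fixed points, so the map on $\Gamma$-fixed points becomes $|\t{Sing}(\s{O}(n)^\Gamma)| \to |E(\s{O}(n)^\Gamma)|$. Both sides are empty if $\s{O}(n)^\Gamma = \varnothing$, and both are contractible otherwise, so the map is a weak equivalence.

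The main obstacle I anticipate is the bookkeeping around the vertex map: verifying that $v_Y$ is natural in $Y$, strictly preserves finite products, and respects the $\Sigma_n$-actions and operadic composition so that it genuinely promotes to a map of simplicial $G$-operads. Once this compatibility is organized, the fixed-point verification reduces to the standard facts that $|EX|$ is contractible for every nonempty set $X$ and that $|\t{Sing}(-)| \to (-)$ is a $(G \times \Sigma_n)$-equivariant weak equivalence.
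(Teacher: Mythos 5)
Your proof is correct, but it takes a genuinely different route from the paper. The paper dispatches both composites with the ``product trick'' from \cite{MayGILS}: since both $\bb{E}$ and $(-)^u$ preserve admissible sets, the two projections in $\bb{E}(\s{O}^u) \leftarrow \bb{E}(\s{O}^u) \times \s{O} \to \s{O}$ are automatically weak equivalences, natural in $\s{O}$, and similarly for the other composite. No explicit comparison map is needed -- the argument exploits the fact that in these categories any two operads with the same admissible sets are connected by the zig-zag through their product. Your approach instead constructs genuine natural transformations: the inclusion of $0$-simplices $\s{O} \to (\bb{E}\s{O})^u$ for one composite, and a zig-zag through $|\t{Sing}(\s{O})|$ using the vertex map $v_Y : \t{Sing}(Y) \to E(Y^u)$ for the other. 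This is heavier machinery (you must verify that $v$ preserves products strictly and that realization commutes with finite-group fixed points), but it has the virtue of producing explicit, one-step-shorter zig-zags and of making visible precisely how $\bb{E}$ and $\t{Sing}$ are related. Both arguments ultimately come down to the same admissibility check on fixed points; the paper's version is shorter and more robust, while yours is more constructive. One small caveat worth flagging in your write-up: when you assert that the inclusion of $0$-simplices is a map of $N$ operads, you should note that $(\bb{E}\s{O})^u$ is indeed a $N$ operad -- $\Sigma_n$-freeness of $|E\s{O}(n)|$ as a set follows from $|E\s{O}(n)|^H = |E(\s{O}(n)^H)| = \varnothing$ for nontrivial $H \subset \Sigma_n$, and similarly the $G$-fixed points are nonempty -- since otherwise the codomain is not in the stated category.
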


\begin{proof} We use the product trick from \cite{MayGILS}. Both $\bb{E}$ and $(-)^u$ preserve admissible sets, and therefore both projections in the diagram $\bb{E}(\s{O}^u) \leftarrow \bb{E}(\s{O}^u) \times \s{O} \to \s{O}$ are weak equivalences that are natural in the operad $\s{O} \in N_\infty\t{-}\b{Op}^G$. Similarly for the other composite.
\end{proof}

Accordingly, we define $N_\infty$ coproducts and tensor products by ignoring topology, performing the combinatorial constructions, and then inserting cells.

\begin{defn} For any $N_\infty$ operads $\s{O}$ and $\s{P}$, define
	\[
	\s{O} *^{N_\infty} \s{P} = \bb{E}(\s{O}^u * \s{P}^u)	\quad\t{and}\quad	\s{O} \otimes^{N_\infty} \s{P} = \bb{E}(F\s{O}^u \otimes F\s{P}^u),
	\]
where $F$ is the free operad functor $F : \b{Sym}^G \to \b{Op}^G$.
\end{defn}

We have $A(\s{O} *^{N_\infty} \s{P})  = A(\s{O}) \lor A(\s{P}) = A(\s{O} \otimes^{N_\infty} \s{P})$ by Theorems \ref{thm:Ncoprjoin} and \ref{thm:NopBV}, so these constructions have the correct behavior. One can also construct an analogous $N_\infty$ product, but the equivalences 
	\[
	\s{O} \times^{N_\infty} \s{P} = \bb{E}(\s{O}^u \times \s{P}^u) \cong (\bb{E}\s{O}^u) \times (\bb{E}\s{P}^u) \simeq \s{O} \times \s{P},
	\]
show that it is unnecessary.

The cells attached by $\bb{E}$ in $*^{N_\infty}$ and $\otimes^{N_\infty}$ spoil the point-set level universal properties of $*$ and $\otimes$. For this reason, it is natural to ask when a $N_\infty$ coproduct or tensor product is equivalent to the usual operadic coproduct or tensor product. We do not believe that $N_\infty$ coproducts are ever equivalent to ordinary ones. As the following example illustrates, it is quite difficult construct an action by $\s{O} *^{N_\infty} \s{P}$ from separate actions of $\s{O}$ and $\s{P}$.

\begin{ex}\label{ex:Ninftycop} Suppose that $X$ is a $G$-space, and that $\s{O}$ and $\s{P}$ are two $N_\infty$ $G$-operads. If $\s{O} * \s{P}$ and $\s{O} *^{N_\infty} \s{P}$ are equivalent, then actions by these operads should consist of equivalent data. An action of $\s{O} * \s{P}$ on $X$ is the same thing as an action by $\s{O}$ and an action by $\s{P}$. On the other hand, if we have an action by $\s{O} *^{N_\infty} \s{P}$, then for any $n \geq 0$ and  $f \in \s{O}(n)$ and $h \in \s{P}(n)$, we must have coherence homotopies between the corresponding operations $F$ and $H$ on $X$. This is not obviously part of the given data, because $\s{O}$ only parametrizes coherence homotopies between its operations, and similarly for $\s{P}$.

There is a bit more hope if we work in a marked setting. Suppose $\s{O}$ and $\s{P}$ have distinguished units $u_{\s{O}} \in \s{O}(0)^G$ and $u_{\s{P}} \in \s{P}(0)^G$ and distinguished binary products $p_{\s{O}} \in \s{O}(2)^G$ and $p_{\s{P}} \in \s{P}(2)^G$, which represent the same operations $U$ and $P$ on $X$. Then $\s{O}$ parametrizes a homotopy from $F$ to $P(\dots P(P(x_1,x_2),x_3),\dots,x_n)$ and $\s{P}$ parametrizes a homotopy from $P(\dots P(P(x_1,x_2),x_3),\dots,x_n)$ to $H$. The issue now is that there should be a $\t{Stab}_{G \times \Sigma_n}(f) \cap \t{Stab}_{G \times \Sigma_n}(h)$-fixed homotopy from $F$ to $H$, and the homotopy above does not necessarily have this property.
\end{ex}

The situation is less clear cut for tensor products. The Dunn additivity theorem (cf. \cite{Dunn}, \cite{FiedVogt}) asserts that the tensor product of an $E_k$-operad with an $E_l$-operad is $E_{k+l}$, provided that the operads are suitably cofibrant. This motivated \cite[Conjecture 6.27]{BH}, which we reproduce below.

\begin{conj}\label{conj:BH} If $\s{O}$ and $\s{P}$ are suitably cofibrant $N_\infty$ operads, then $\s{O} \otimes \s{P}$ is also a $N_\infty$ operad, and $A(\s{O} \otimes \s{P}) = A(\s{O}) \lor A(\s{P})$.
\end{conj}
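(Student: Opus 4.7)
My plan is to reduce the topological statement to the combinatorial analogue established in Theorem \ref{thm:NopBV} and then to upgrade it back to topology through the $\bb{E}$-construction. I interpret ``suitably cofibrant'' as meaning that $\s{O}$ and $\s{P}$ are $\Sigma$-cofibrant $N_\infty$ operads built cellularly from generating cells of the form $(G \times \Sigma_n)/\Gamma \times D^k$, $\Gamma$ a graph subgroup. Under this hypothesis, I would first pass to the underlying $N$ operads $\s{O}^u$ and $\s{P}^u$ and choose $\Sigma$-cofibrant discrete replacements $\til{\s{O}}$ and $\til{\s{P}}$ that are retracts of free operads in $\b{Op}^G$. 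These can be constructed either by selecting $G \times \Sigma_n$-equivariant point sections of the cells, or by cofibrantly replacing in the marked model category $\b{Op}^G_+$ of Theorem \ref{thm:modcat} and forgetting markings. By Proposition \ref{prop:Eu} this gives weak equivalences $\bb{E}\til{\s{O}} \simeq \s{O}$ and $\bb{E}\til{\s{P}} \simeq \s{P}$.

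The middle step is then immediate: Theorem \ref{thm:NopBV} yields that $\til{\s{O}} \otimes \til{\s{P}}$ is a $N$ operad with $A(\til{\s{O}} \otimes \til{\s{P}}) = A(\s{O}) \lor A(\s{P})$, so $\bb{E}(\til{\s{O}} \otimes \til{\s{P}})$ is a $N_\infty$ operad realizing the join. The inclusion $A(\s{O}) \lor A(\s{P}) \subseteq A(\s{O} \otimes \s{P})$ is automatic from the structure maps $\s{O} \to \s{O} \otimes \s{P} \leftarrow \s{P}$. Hence it remains only to compare the topological tensor product $\s{O} \otimes \s{P}$ with the combinatorial model $\bb{E}(\til{\s{O}} \otimes \til{\s{P}})$. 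Since $\bb{E}$ preserves limits but not colimits, the obvious candidate maps run the wrong way for a direct argument, so my plan is to produce a zig-zag of $N_\infty$-equivalences through a cofibrant Boardman--Vogt style $W$-type bar resolution $W(\s{O},\s{P})$ that simultaneously augments to $\s{O} \otimes \s{P}$ and maps to $\bb{E}(\til{\s{O}} \otimes \til{\s{P}})$. If both comparison maps were arity-wise $\Gamma$-equivalences for every graph subgroup $\Gamma$, the conjecture would follow.

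The main obstacle is precisely this last step: controlling $\Gamma$-fixed subspaces of the quotient $(\s{O} \otimes \s{P})(n)$. The interchange relations can identify operations whose $G \times \Sigma_n$-stabilizers differ, so a priori there can be ``accidental'' $\Gamma$-fixed points in $(\s{O} \otimes \s{P})(n)^\Gamma$, contributed by configurations that only become $\Gamma$-invariant after passing to the quotient. One would need to verify that such fixed points sit in the same path components as images of combinatorially-predicted operations, and that each nonempty fixed-point space is contractible. A promising way in would be to model the interchange via a $\Gamma$-equivariant poset of admissible two-level tree compositions, and to show by a discrete Morse-theoretic or $\Gamma$-equivariant nerve argument that the corresponding complex is arity-wise contractible; however, doing this cleanly for all graph subgroups simultaneously is exactly the subtlety that has kept Conjecture \ref{conj:BH} open, and any successful proof will very likely pass through a genuinely equivariant combinatorial analysis of the Boardman--Vogt $W$-construction.
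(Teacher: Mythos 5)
This statement is a conjecture, not a theorem, and the paper does not prove it -- the introduction explicitly says ``This remains an open problem,'' and the body reiterates that Theorem \ref{thm:NopBV} ``does not quite imply the topological result.'' The paper's contribution toward Conjecture \ref{conj:BH} is only the partial result in Proposition \ref{prop:combtentop}: for cofibrant $N$ operads $\s{O}, \s{P}$, the tensor product $\bb{E}\s{O} \otimes \bb{E}\s{P}$ is $\Sigma$-free, has $G$-fixed points in every arity, and has the right admissible sets. What is missing, and what the paper flags in the sentence after that proposition, is precisely that $(\bb{E}\s{O} \otimes \bb{E}\s{P})(n)^\Gamma$ be empty or contractible for every graph subgroup $\Gamma$. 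So there is no ``paper's own proof'' to compare against.

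That said, your proposal is an honest and accurate sketch of the state of affairs. You correctly reduce to the combinatorial Theorem \ref{thm:NopBV}, correctly observe that $\bb{E}$ not commuting with colimits is what blocks a direct transfer, and you correctly pinpoint the real obstruction: the interchange quotient can create ``accidental'' $\Gamma$-fixed points from operations whose stabilizers differ, and one must show each nonempty fixed-point space is contractible. This is exactly the gap the paper identifies. Where your writeup diverges from the paper's route is in the comparison mechanism: the paper uses the concrete one-directional map $\bb{E}\s{O} \otimes \bb{E}\s{P} \to \bb{E}(\s{O} \otimes \s{P})$ induced by the universal property of $\otimes$ to squeeze out the partial information of Proposition \ref{prop:combtentop}, whereas you propose a zig-zag through a Boardman--Vogt $W$-construction. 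The $W$-construction idea is a reasonable speculation for how one might eventually close the gap, but as you acknowledge, neither you nor anyone else has carried out the required genuinely equivariant analysis of graph-subgroup fixed points of the $W$-resolution. So your ``proof'' is really a research plan, with the decisive step still unproved -- which is the correct and honest assessment, since the conjecture remains open.

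One small caution about your framing: you interpret ``suitably cofibrant'' as a specific cellular hypothesis and then build discrete replacements $\til{\s{O}}, \til{\s{P}}$ from sections of cells. In fact part of what is unresolved about Conjecture \ref{conj:BH} is exactly what cofibrancy hypothesis should be imposed; the paper sidesteps this by working directly with $\bb{E}\s{O}$ for $\s{O}$ a cofibrant $N$ operad, which is a concrete class of $N_\infty$ operads for which the question is cleanest. Pinning down the right topological hypothesis is itself part of the open problem, not a preliminary one can take for granted.
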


For such operads, we would have $\s{O} \otimes^{N_\infty} \s{P} \simeq \s{O} \otimes \s{P}$ because both sides would have the same admissible sets. Theorem \ref{thm:NopBV} is a precise combinatorial analogue to this conjecture, but it does not quite imply the topological result. We can deduce the following portions though.

\begin{prop}\label{prop:combtentop} If $\s{O}$ and $\s{P}$ are cofibrant $N$ operads, then $\bb{E}\s{O} \otimes \bb{E}\s{P}$ is $\Sigma$-free, has $G$-fixed points of all arities, and satisfies $A(\bb{E}\s{O} \otimes \bb{E}\s{P}) = A(\bb{E}\s{O}) \lor A(\bb{E}\s{P})$.
\end{prop}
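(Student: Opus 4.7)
The plan is to compare $\bb{E}\s{O} \otimes \bb{E}\s{P}$ with $\bb{E}(\s{O} \otimes \s{P})$ and transport the conclusions of Theorem \ref{thm:NopBV} across a naturally arising comparison map. First I would construct a map $\vp \colon \bb{E}\s{O} \otimes \bb{E}\s{P} \to \bb{E}(\s{O} \otimes \s{P})$. Applying the functor $\bb{E}$ to the structural inclusions $\s{O}, \s{P} \to \s{O} \otimes \s{P}$ produces maps $\bb{E}\s{O}, \bb{E}\s{P} \to \bb{E}(\s{O} \otimes \s{P})$ of topological $G$-operads, and the interchange relations defining the Boardman-Vogt tensor product survive intact because $\bb{E}$ is a functor. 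The universal property of the topological Boardman-Vogt tensor product on the source then yields $\vp$.

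Next, Theorem \ref{thm:NopBV} tells us that $\s{O} \otimes \s{P}$ is a $N$ operad with $A(\s{O} \otimes \s{P}) = A(\s{O}) \lor A(\s{P})$. Since $E$ preserves $\Sigma$-actions coordinatewise in each simplicial degree and geometric realization carries free simplicial $\Sigma_n$-actions to free topological ones, $\bb{E}(\s{O} \otimes \s{P})$ is $\Sigma$-free. The map $\vp$ is $G \times \Sigma_n$-equivariant in each arity $n$, so its existence forces the source $\bb{E}\s{O} \otimes \bb{E}\s{P}$ to be $\Sigma$-free as well. The same map restricts to graph-fixed points, so any $T$ admissible for the source is admissible for the target; using that $\bb{E}$ preserves admissibility (the fixed points of $EX$ are detected by the fixed points of $X$) this delivers
\[
A(\bb{E}\s{O} \otimes \bb{E}\s{P}) \subseteq A(\bb{E}(\s{O} \otimes \s{P})) = A(\s{O}) \lor A(\s{P}) = A(\bb{E}\s{O}) \lor A(\bb{E}\s{P}).
\]

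For the $G$-fixed points claim and the reverse inclusion on admissible sets, I would use the canonical maps $\bb{E}\s{O}, \bb{E}\s{P} \to \bb{E}\s{O} \otimes \bb{E}\s{P}$ from the universal property of the tensor product. Because $\s{O}$ is a $N$ operad, $\bb{E}\s{O}$ is a $N_\infty$ operad (Theorem \ref{thm:Nopmod}), hence has nonempty $G$-fixed points in all arities, and these push forward to show $(\bb{E}\s{O} \otimes \bb{E}\s{P})(n)^G \neq \varnothing$ for every $n$. In particular $\bb{E}\s{O} \otimes \bb{E}\s{P}$ has $G$-fixed nullary and binary operations, and routine operadic composition arguments then show that $A(\bb{E}\s{O} \otimes \bb{E}\s{P})$ is closed under conjugation, restriction, subobjects, coproducts, and self-induction, i.e.\ is an indexing system. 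Since it contains $A(\bb{E}\s{O}) \cup A(\bb{E}\s{P})$ it contains their join, and combined with the upper bound this yields the desired equality.

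The main obstacle is verifying that $\vp$ really exists as a map of topological $G$-operads, i.e.\ that the topological Boardman-Vogt tensor product has the universal property invoked here and that $\bb{E}$ genuinely transports interchange relations. Once $\vp$ is in hand, equivariance and preservation of graph fixed points do essentially all the remaining work, and the proposition becomes a clean transfer of the combinatorial Theorem \ref{thm:NopBV}.
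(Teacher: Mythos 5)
Your proposal is correct and matches the paper's proof step for step: build $\vp \colon \bb{E}\s{O} \otimes \bb{E}\s{P} \to \bb{E}(\s{O} \otimes \s{P})$ from the interchanging maps and the universal property of the tensor product, transport $\Sigma$-freeness and the upper bound on admissible sets across $\vp$, and use the canonical maps into $\bb{E}\s{O} \otimes \bb{E}\s{P}$ for the $G$-fixed points and the lower bound. The extra care you take to show $A(\bb{E}\s{O} \otimes \bb{E}\s{P})$ is an indexing system is a detail the paper leaves implicit, and your worry about $\bb{E}$ transporting the interchange relations is unfounded, since interchange is an equational condition on the two maps and any functor preserves it.
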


\begin{proof} By Theorem \ref{thm:NopBV}, the tensor product $\s{O} \otimes \s{P}$ in $\b{Op}^G$ is a $N$ operad such that $A(\s{O} \otimes \s{P}) = A(\s{O}) \lor A(\s{P})$. Applying the functor $\bb{E}$ gives a pair of interchanging maps $\bb{E}\s{O} \to \bb{E}(\s{O} \otimes \s{P}) \leftarrow \bb{E}\s{P}$ of $N_\infty$ operads, which in turn induce a map $\bb{E}\s{O} \otimes \bb{E}\s{P} \to \bb{E}(\s{O} \otimes \s{P})$ by universality. Since $\bb{E}(\s{O} \otimes \s{P})$ is $\Sigma$-free, so is $\bb{E}\s{O} \otimes \bb{E}\s{P}$. Since $\bb{E}\s{O}(n)^G \neq \varnothing$ for all $n \geq 0$, the same holds for $\bb{E}\s{O} \otimes \bb{E}\s{P}$. The existence of a map $\bb{E}\s{O} \otimes \bb{E}\s{P} \to \bb{E}(\s{O} \otimes \s{P})$ implies the inclusion
	\[
	A(\bb{E}\s{O} \otimes \bb{E}\s{P}) \subset A(\bb{E}(\s{O} \otimes \s{P})) = A(\s{O}) \lor A(\s{P}) = A(\bb{E}\s{O}) \lor A(\bb{E}\s{P}),
	\]
and the reverse inclusion follows from the maps $\bb{E}\s{O} \to \bb{E}\s{O} \otimes \bb{E}\s{P} \leftarrow \bb{E}\s{P}$ in the universal diagram.
\end{proof}

Thus, if one could prove that $(\bb{E}\s{O} \otimes \bb{E}\s{P})(n)^\Gamma$ is either empty or contractible for every integer $n \geq 0$ and graph subgroup $\Gamma \subset G \times \Sigma_n$, then Conjecture \ref{conj:BH} would hold for the $N_\infty$ operads $\bb{E}\s{O}$ and $\bb{E}\s{P}$. That being said, we can already deduce useful topological results without knowing that $\bb{E}\s{O} \otimes \bb{E}\s{P}$ is a $N_\infty$ operad.

\begin{thm}\label{thm:Ninftyinterchange} Let $R$ be an $\s{O}$-algebra orthogonal $G$-spectrum for some $N_\infty$ operad $\s{O}$, and suppose further that $\t{id} \in \s{O}(1)^G$ is a nondegenerate basepoint, and $\s{O}(n)$ is of the homotopy type of a $G \times \Sigma_n$-CW complex for every $n \geq 0$. Then there is a weakly equivalent $G$-spectrum $R' \simeq R$ and a weakly equivalent $N_\infty$ operad $\s{O}' \simeq \s{O}$ such that $R'$ is equipped with a pair of interchanging $\s{O}'$-actions.
\end{thm}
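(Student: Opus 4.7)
The plan is to replace $\s{O}$ with a weakly equivalent $N_\infty$ operad built from a cofibrant $N$ operad, so that the desired self-interchange can be produced combinatorially via Theorem \ref{thm:NopBV} and then transported to spectra by rectification along a weak equivalence.

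Choose a cofibrant $N$ operad $\s{P}$ with $A(\s{P}) = A(\s{O})$; for instance, take $\s{P} = F(S)$ for any $\Sigma$-free symmetric sequence $S \in \b{Sym}^G$ with $A(S) = A(\s{O})$ and $S(n)^G \neq \varnothing$ for all $n \geq 0$. This $\s{P}$ is free, hence cofibrant, and $A(\s{P}) = \la A(S) \ra = A(\s{O})$ because $A(\s{O})$ is already an indexing system. Theorem \ref{thm:NopBV} then implies that $\s{P} \otimes \s{P}$ is a $N$ operad, with
\[
A(\s{P} \otimes \s{P}) \,=\, A(\s{P}) \lor A(\s{P}) \,=\, A(\s{O}).
\]
Setting $\s{O}' = \bb{E}\s{P}$, both $\s{O}'$ and $\bb{E}(\s{P} \otimes \s{P})$ are $N_\infty$ operads whose admissible sets coincide with $A(\s{O})$, and so each is weakly equivalent to $\s{O}$ by Theorem \ref{thm:classify}.

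The two canonical inclusions $\s{P} \rightrightarrows \s{P} \otimes \s{P}$ interchange with one another in $\s{P} \otimes \s{P}$ by the defining relation of $\otimes$. Because $\bb{E}$ preserves finite products, these pass to two interchanging maps $\bb{E}\s{P} \rightrightarrows \bb{E}(\s{P} \otimes \s{P})$ of $N_\infty$ operads, and the universal property of the topological Boardman--Vogt tensor product produces an operad map $\s{O}' \otimes \s{O}' \to \bb{E}(\s{P} \otimes \s{P})$. Consequently, it suffices to find an equivalent $G$-spectrum $R' \simeq R$ that carries a $\bb{E}(\s{P} \otimes \s{P})$-algebra structure, since pulling such a structure back along the above map endows $R'$ with a pair of interchanging $\s{O}'$-actions.

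The hypotheses on $\s{O}$ are exactly those under which change-of-operad along a weak equivalence of $N_\infty$ operads induces a Quillen equivalence of categories of algebras in orthogonal $G$-spectra, in the sense of \cite{BH}. The analogous hypotheses hold automatically for $\bb{E}\s{P}$ and $\bb{E}(\s{P} \otimes \s{P})$, since these are geometric realizations of simplicial objects, with identity elements arising from nondegenerate $0$-simplices. Transporting $R$ along the zigzag of weak equivalences connecting $\s{O}$ with $\bb{E}(\s{P} \otimes \s{P})$ in $N_\infty\t{-}\b{Op}^G$ therefore yields an equivalent $G$-spectrum $R'$ equipped with a $\bb{E}(\s{P} \otimes \s{P})$-algebra structure, and hence with the sought-after pair of interchanging $\s{O}'$-actions. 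The substantive difficulty is exactly this rectification input; once it is invoked, the rest is formal, because Theorem \ref{thm:NopBV} supplies the combinatorial interchange and the equality of admissible sets identifies all the $N_\infty$ operads in play.
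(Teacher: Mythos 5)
Your proof is correct and follows essentially the same route as the paper: both replace $\s{O}$ by $\bb{E}$ of a free $N$ operad with the same admissible sets (the paper takes $F(\s{O}^u)$; you allow any $F(S)$ with $A(S)=A(\s{O})$), use Theorem \ref{thm:NopBV} plus the universal diagram $\s{P}\to\s{P}\otimes\s{P}\leftarrow\s{P}$ to produce interchanging maps of $N_\infty$ operads, and transport $R$ to a $\bb{E}(\s{P}\otimes\s{P})$-algebra via the change-of-operad Quillen equivalences of \cite[Theorem A.3]{BH}. The only inessential difference is that you factor through the universal map $\s{O}'\otimes\s{O}' \to \bb{E}(\s{P}\otimes\s{P})$, while the paper simply pulls back along the two interchanging maps directly.
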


\begin{proof}Let $\s{O}' = \bb{E}(F\s{O}^u)$ and $\s{P} = \bb{E}(F\s{O}^u \otimes F\s{O}^u)$. Then $\s{O}$, $\s{O}'$ and $\s{P}$ are equivalent $N_\infty$ operads. The projections $\s{O} \leftarrow \s{O} \times \s{P} \to \s{P}$ induce a chain of Quillen equivalences $\b{Sp}^G[\s{O}] \simeq \b{Sp}^G[\s{O} \otimes \s{P}] \simeq \b{Sp}^G[\s{P}]$ between the corresponding categories of algebra $G$-spectra by \cite[Theorem A.3]{BH}, and we let $R'$ be a fibrant replacement of the image of $R$ in $\b{Sp}^G[\s{P}]$. Applying $\bb{E}$ to the universal diagram $F\s{O}^u \to F\s{O}^u \otimes F\s{O}^u \leftarrow F\s{O}^u$ gives a pair of interchanging maps $\s{O}' \to \s{P} \leftarrow \s{O}'$, and pulling back gives a pair of interchanging $\s{O}'$-algebra structures on $R'$.
\end{proof}

If $R$ is an $\s{O}$-algebra $G$-spectrum such that the $\s{O}$-action self-interchanges, then certain norm maps in $\ul{\pi}_0(R)$ become homomorphisms of multiplicative monoids \cite[Theorem 7.12]{BH}. This is a necessary condition for $\ul{\pi}_0(R)$ to be an incomplete Tambara functor in the sense of \cite{BHinc}.

\section{Algebraic images and inverse images}\label{sec:iminvim}

In this section, we give purely algebraic definitions of image and inverse image transfer systems (Definition \ref{defn:iminvimts}), and then we establish their functoriality and adjointness properties (Proposition \ref{prop:iminvimfunts}). We relate these constructions to operadic induction, restriction, and coinduction in \S\ref{sec:indrescoind}. Much of this theory works as expected, but there are a few surprises. Most notably, there is an extra inverse image construction. Every group homomorphism $f : G \to G'$ determines a pair of image constructions analogous to induction and coinduction, but the map $f$ also determines two inverse image constructions, which happen to coincide if $f$ is injective (Proposition \ref{prop:injhomots}). When $f$ is noninjective, one of these inverse images corresponds to restriction, but the other one and its adjoint seem to be red herrings, with no natural operadic interpretation.

\subsection{Overview} We sketch the definitions and offer a few examples now, before giving a more formal treatment in the next section.

Our constructions are loosely inspired by a pair of adjunctions associated to an arbitrary set map. Suppose $X$ and $Y$ are sets, $f : X \to Y$ is a function, and $\c{P}(X)$ and $\c{P}(Y)$ are the corresponding power sets, regarded as posets under inclusion. Then taking images and inverse images determines an order adjunction $f : \c{P}(X) \leftrightarrows \c{P}(Y) : f^{-1}$. Intersections are not always preserved under images, and therefore $f : \c{P}(X) \to \c{P}(Y)$ does not always have a left adjoint. However, there is an adjunction $f^{-1} : \c{P}(Y) \leftrightarrows \c{P}(X) : f_*$, where $f_*A = \{ y \in Y \, | \, f^{-1}(y) \subset A\}$ for any subset $A \subset X$. For comparison, the ordinary image can be expressed as $fA = \{ y \in Y \, | \, f^{-1}(y) \cap A \neq \varnothing\}$. The chain of adjunctions generally stops here because the right adjoint $f_*$ does not always preserve unions.

If $f : G \to G'$ is a group homomorphism, then it makes sense to apply $f^{\times 2}$ and $(f^{-1})^{\times 2}$ to the relations in a transfer system. The results need not be transfer systems, but we can close them up. Every binary relation $R$ on $\b{Sub}(G)$ generates a transfer system $\la R \ra$, provided it refines inclusion. Explicitly, the relation $\la R \ra$ is obtained by closing $R$ under conjugation and restriction, and then passing to the reflexive and transitive closure \cite[Theorem B.2]{RubChar}. Combining $\la \cdot \ra$ with the set-theoretic maps $f^{\times 2}$ and $(f^{-1})^{\times 2}$ gives natural transfer system analogues of $f$ and $f^{-1}$. We denote them $f_L$ and $f^{-1}_L$.

\begin{ex}\label{ex:leftiminv} Consider the map $f : C_4 \to \Sigma_3$ that sends the generator of $C_4$ to a transposition. The subgroup lattice of $C_4$ is the tower $C_1 \subset C_2 \subset C_4$. The proper, nontrivial subgroups of $\Sigma_3$ are three conjugate copies of $C_2$ and a single copy of $C_3$. We draw them as three dots in a row, and an odd dot off to the side. Here are some examples of $f_L$ and $f^{-1}_L$.
	\begin{align*}
		f_L \Big( \cppa \Big) = f_L \Big( \cppb \Big) = \dsia	\quad&\quad	f_L \Big( \cppc \Big) = f_L \Big( \cppd \Big) = f_L \Big( \cppe \Big) = \dsib	\\
		f^{-1}_L \Big( \dsia \Big) = f^{-1}_L \Big( \dsic \Big) = \cppa		\quad&\quad	f^{-1}_L \Big( \dsib \Big) = f^{-1}_L \Big( \dsii \Big) = \cppd
	\end{align*}
\end{ex}

On the other hand, constructing an analogue to $f_*$ for transfer systems requires another approach, because the set-theoretic map $f_*$ does not preserve subgroups. The power set adjunction $f^{-1} \dashv f_*$ indicates that $f^{-1}$ and $f_*$ should be suitably dual, which necessitates the next construction.

\begin{prop}\label{prop:cogents} Suppose that $\leq$ is a partial order on $\b{Sub}(G)$ that refines inclusion. Then
	\[
	\ra\! \leq \!\la \,\, := \Bigg\{ (K,H) \in \b{Sub}(G)^{\times 2} \, \Bigg| \, \begin{array}{c}
	\t{$K \subset H$, and $gKg^{-1} \cap L \leq L$}\\
	\t{for all $g \in G$ and  $L \subset gHg^{-1}$}
	\end{array}
	\Bigg\}
	\]
is the largest $G$-transfer system contained in $\leq$.
\end{prop}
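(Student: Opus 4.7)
My plan is to verify the claim in three stages: first, that $\ra\!\leq\!\la$ is actually a $G$-transfer system; second, that it is contained in $\leq$; and third, that it contains every $G$-transfer system contained in $\leq$. The last two are essentially immediate once the right adjunctions between the quantifiers are spotted, and the real work lies in verifying the axioms of a transfer system.

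For containment in $\leq$, I would specialize the defining condition to $g = 1$ and $L = H$: this gives $K = K \cap H \leq H$. For maximality, suppose $\to$ is any $G$-transfer system with $\,\to\,\subset\,\leq\,$ and $K \to H$. Then for any $g \in G$ conjugation-closure gives $gKg^{-1} \to gHg^{-1}$, and for any $L \subset gHg^{-1}$ restriction-closure gives $gKg^{-1} \cap L \to L$. Since $\,\to\,\subset\,\leq$, the relation $gKg^{-1} \cap L \leq L$ holds, so $K \ra\!\leq\!\la H$.

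The main step is checking that $\ra\!\leq\!\la$ is a $G$-transfer system. Refinement of inclusion is built into the definition. Reflexivity follows because if $L \subset gKg^{-1}$ then $gKg^{-1} \cap L = L \leq L$. Antisymmetry is inherited from inclusion. For closure under conjugation, if $K \ra\!\leq\!\la H$ and $g' \in G$, then for any $g \in G$ and $L \subset g(g'Hg'^{-1})g^{-1}$, rewriting shows $(gg')K(gg')^{-1} \cap L \leq L$ by the original hypothesis applied to $gg'$. For closure under restriction, if $K \ra\!\leq\!\la H$ and $L_0 \subset H$, then for any $g$ and any $M \subset gL_0 g^{-1} \subset gHg^{-1}$, I compute $g(L_0 \cap K)g^{-1} \cap M = gL_0 g^{-1} \cap gKg^{-1} \cap M = gKg^{-1} \cap M \leq M$, where the middle equality uses $M \subset gL_0 g^{-1}$.

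The step I expect to need the most care is transitivity. Given $K_1 \ra\!\leq\!\la K_2$ and $K_2 \ra\!\leq\!\la K_3$, I would fix $g \in G$ and $L \subset gK_3 g^{-1}$ and try to chain the inequalities. Applying the second relation to $g$ and $L$ yields $gK_2 g^{-1} \cap L \leq L$. Setting $M = gK_2 g^{-1} \cap L$, which is a subgroup of $gK_2 g^{-1}$, the first relation applied to $g$ and $M$ gives $gK_1 g^{-1} \cap M \leq M$. Since $K_1 \subset K_2$ implies $gK_1 g^{-1} \subset gK_2 g^{-1}$, we have $gK_1 g^{-1} \cap M = gK_1 g^{-1} \cap L$, and then transitivity of $\leq$ produces $gK_1 g^{-1} \cap L \leq L$, as required.
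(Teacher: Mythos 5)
Your proof is correct and follows essentially the same structure and arguments as the paper's proof: the same specialization $g=e$, $L=H$ for containment in $\leq$, the same use of conjugation and restriction closure for maximality, and in particular the identical transitivity argument via the intermediate subgroup $M = gK_2g^{-1}\cap L$. The only difference is that you spell out the conjugation-closure and reflexivity steps a bit more explicitly, where the paper treats them as immediate.
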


Note that if $R$ is any reflexive relation on $\b{Sub}(G)$, then there must be maximal transfer systems contained in $R$, but there need not be a maximum $\ra R  \la$. Assuming $R$ is a partial order allows us to construct $\ra R \la$ directly.

\begin{proof}[Proof of Proposition \ref{prop:cogents}] We begin by showing $\ra\! \leq \!\la$ is a transfer system. The reflexivity of $\ra\! \leq \!\la$ follows from that of $\leq$. By definition, the relation $\ra\! \leq \!\la$ refines $\subset$, and therefore it is also antisymmetric. For transitivity, suppose that $(K,J), (J,H) \in \, \ra\! \leq \!\la$. Given  $g \in G$ and $L \subset gHg^{-1}$, let $M = gJg^{-1} \cap L$. Then $M \subset gJg^{-1}$, and we have
	\[
	gKg^{-1} \cap L = gKg^{-1} \cap M \leq M = gJg^{-1} \cap L \leq L,
	\]
so that $gKg^{-1} \cap L \leq L$ by the transitivity of $\leq$. It is clear that $\ra \! \leq \! \la$ is closed under conjugation. For restriction, suppose $(K,H) \in \,\, \ra\! \leq \!\la$ and $L \subset H$. Then $(K \cap L , L) \in \,\, \ra\! \leq \!\la$ because if $g \in G$ and $M \subset gLg^{-1}$, then $M \subset gHg^{-1}$ and hence
	\[
	g(K \cap L)g^{-1} \cap M = gKg^{-1} \cap M \leq M.
	\]
Therefore $\ra\! \leq \!\la$ is a transfer system. It refines $\leq$ because if $(K,H) \in \,\, \ra\! \leq \!\la$, then taking $g = e \in G$ and $L = H \subset eHe^{-1}$ shows $K = eKe^{-1} \cap H \leq H$.

Finally, suppose that $\to$ is a transfer system that refines $\leq$, and suppose $K \to H$. Then $K \subset H$ because $\to$ refines inclusion. Then, for any $g \in G$ and $L \subset gHg^{-1}$, we have $gKg^{-1} \to gHg^{-1}$ and $gKg^{-1} \cap L \to L$, which implies $gKg^{-1} \cap L \leq L$. Therefore $\to$ refines $\ra\! \leq \!\la$.
\end{proof}

We obtain the transfer system analogue of $f_*$ by dualizing the construction of $f^{-1}_L$. First, we take the inverse image along $(f^{-1})^{\times 2}$, and then we apply $\ra \! \cdot \! \la$. Similarly, one can take the inverse image along $f^{\times 2}$ and then apply $\ra \! \cdot \! \la$, and this is where the extra inverse image map comes from. We denote these two constructions $f_R$ and $f^{-1}_R$.

\begin{ex}\label{ex:rightiminv} Consider the homomorphism $f : C_4 \to \Sigma_3$ from Example \ref{ex:leftiminv} once more. Here are some examples of $f_R$ and $f^{-1}_R$.
	\begin{align*}
		f_R \Big( \cppa \Big) = f_R \Big( \cppb \Big) = f_R \Big( \cppc \Big) = \dsic	\quad&\quad	f_R \Big( \cppd \Big) = f_R \Big( \cppe \Big) = \dsii	\\
		f^{-1}_R \Big( \dsia \Big) = f^{-1}_R \Big( \dsic \Big) = \cppb	\quad&\quad	f^{-1}_R \Big( \dsib \Big) = f^{-1}_R \Big( \dsii \Big) = \cppe
	\end{align*}
\end{ex}

Note the differences between Examples \ref{ex:leftiminv} and \ref{ex:rightiminv}. The maps $f_L$ and $f_R$ should be distinct, because operadic induction and coinduction are distinct. The maps $f^{-1}_L$ and $f^{-1}_R$ are both supposed to model restriction, but we are seeing a pointwise inequality $f^{-1}_L < f^{-1}_R$. This occurs for every noninjective map $f$, and the operadically correct construction turns out to be $f^{-1}_L$.

\subsection{Definitions and first properties of image and inverse image transfer systems} For any group $G$, let
	\[
	\b{Sub}(G)_{\subset} = \{ (K,H) \in \b{Sub}(G)^{\times 2} \, | \, K \subset H \}
	\]
and suppose $F : \b{Sub}(G) \to \b{Sub}(G')$ is an order-preserving map. Then $F^{\times 2}$ restricts to a set map
	\[
	F_{\subset} : \b{Sub}(G)_{\subset} \to \b{Sub}(G')_{\subset} ,
	\]
and therefore there is an image-inverse image adjunction
	\[
	F_{\subset} : \c{P}(\b{Sub}(G)_{\subset}) \rightleftarrows \c{P}(\b{Sub}(G')_{\subset}) : (F_{\subset})^{-1}.
	\]
We can identify the elements of $\c{P}(\b{Sub}(G)_{\subset})$ with binary relations on $\b{Sub}(G)$ that refine inclusion, and similarly for $G'$. Moreover, if $R \in \c{P}(\b{Sub}(G')_{\subset})$ is a partial order, then so is $(F_{\subset})^{-1}R$. This enables us to make the following definitions.

\begin{defn}\label{defn:tsFiminv} Suppose $G$ and $G'$ are finite groups and $F : \b{Sub}(G) \to \b{Sub}(G')$ is an order-preserving map. For any $G$-transfer system $\to$, define
	\[
	F_L(\to) := \la F_{\subset}(\to) \ra 
	\]
and for any $G'$-transfer system $\rightsquigarrow$, define
	\[
	F^{-1}_R(\rightsquigarrow) := \,\, \ra  (F_{\subset})^{-1}(\rightsquigarrow) \la .
	\]
\end{defn}

We summarize a few properties of $F_L$ and $F^{-1}_R$.

\begin{lem}\label{lem:iminvimts} Suppose $G$ and $G'$ are finite groups. 
	\begin{enumerate}
		\item{}For any inclusion-preserving map $F : \b{Sub}(G) \to \b{Sub}(G')$, the induced maps $F_L : \b{Tr}(G) \rightleftarrows \b{Tr}(G') : F_R^{-1}$ form an adjunction $F_L \dashv F_R^{-1}$. If $F_{\subset}$ preserves transfer systems, then $F_L = F_{\subset}$. If $(F_{\subset})^{-1}$ preserves transfer systems, then $F_R^{-1} = (F_{\subset})^{-1}$.
		\item{}For any pair of inclusion-preserving maps $E : \b{Sub}(G) \to \b{Sub}(G')$ and $F : \b{Sub}(G') \to \b{Sub}(G'')$, we have refinements
			\[
			(FE)_L(\to) \subset F_L E_L(\to) \quad\t{and}\quad E_R^{-1} F_R^{-1}(\rightsquigarrow) \subset (FE)_R^{-1}(\rightsquigarrow).
			\]
		Moreover, if either of the equalities $(FE)_L = F_L E_L$ or $(FE)^{-1}_R = E^{-1}_R F^{-1}_R$ hold, then both of them hold.
		\item{}If either $E_{\subset}$ or $(F_{\subset})^{-1}$ preserves transfer systems, then both $(FE)_L = F_L E_L$ and $(FE)_R^{-1} = E_R^{-1} F_R^{-1}$.
	\end{enumerate}
\end{lem}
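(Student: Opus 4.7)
The plan is to realize $F_L \dashv F_R^{-1}$ as the composite of two more primitive adjunctions. First, the set-level image $F_\subset$ and inverse image $(F_\subset)^{-1}$ form an adjoint pair between the posets of inclusion-refining relations on $\b{Sub}(G)$ and $\b{Sub}(G')$. Second, by construction $\la \cdot \ra$ is a left adjoint to the inclusion of $\b{Tr}(G)$ into inclusion-refining relations (it produces the smallest transfer system containing a given relation), while Proposition \ref{prop:cogents} exhibits $\ra \cdot \la$ as a corresponding right adjoint on the subposet of inclusion-refining partial orders. Composing these, for a $G$-transfer system $\to$ and a $G'$-transfer system $\rightsquigarrow$, yields
\[
F_L(\to) \subset \rightsquigarrow \iff F_\subset(\to) \subset \rightsquigarrow \iff \to \, \subset (F_\subset)^{-1}(\rightsquigarrow) \iff \to \, \subset F_R^{-1}(\rightsquigarrow),
\]
which is precisely the adjunction $F_L \dashv F_R^{-1}$. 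The secondary claims in (1) are immediate: $\la \cdot \ra$ and $\ra \cdot \la$ act as the identity on transfer systems, so if $F_\subset(\to)$ is already a transfer system then $F_L(\to) = F_\subset(\to)$, and dually when $(F_\subset)^{-1}(\rightsquigarrow)$ is.

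For part (2), the forward refinement follows from the chain $(FE)_\subset(\to) = F_\subset E_\subset(\to) \subset F_\subset E_L(\to) \subset F_L E_L(\to)$, using monotonicity of $F_\subset$ and extensivity of $\la \cdot \ra$; since the right-hand side is a transfer system containing $(FE)_\subset(\to)$, it also contains the closure $(FE)_L(\to)$. The dual refinement $E_R^{-1} F_R^{-1}(\rightsquigarrow) \subset (FE)_R^{-1}(\rightsquigarrow)$ is proved by the symmetric argument with $\ra \cdot \la$ in place of $\la \cdot \ra$. The equivalence of the two potential equalities then falls out of the uniqueness of adjoints: $F_L E_L$ is left adjoint to $E_R^{-1} F_R^{-1}$ as a composite of adjoints, while $(FE)_L$ is left adjoint to $(FE)_R^{-1}$ by part (1), so an equality on one side forces equality on the other.

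For part (3), suppose first that $E_\subset$ preserves transfer systems, so $E_L = E_\subset$. Then
\[
F_L E_L(\to) = F_L E_\subset(\to) = \la F_\subset E_\subset(\to) \ra = \la (FE)_\subset(\to) \ra = (FE)_L(\to),
\]
and part (2) promotes this equality to the dual identity. If instead $(F_\subset)^{-1}$ preserves transfer systems, then $F_R^{-1} = (F_\subset)^{-1}$, and a mirror calculation collapses $E_R^{-1} F_R^{-1}(\rightsquigarrow)$ to $(FE)_R^{-1}(\rightsquigarrow)$, after which part (2) again propagates the result. The essential input is Proposition \ref{prop:cogents}; everything else is formal adjoint bookkeeping, and the main obstacle is really just carefully tracking which construction is left and which is right.
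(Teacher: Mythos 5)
Your proof is correct and follows essentially the same route as the paper: part (1) via composing the set-level image/inverse-image adjunction with the adjointness properties of $\la \cdot \ra$ and $\ra \cdot \la$, part (2) via the monotonicity/extensivity chain plus uniqueness of adjoints (equality rather than mere isomorphism coming from the codomain being a poset), and part (3) via the collapse $\la F_\subset \la E_\subset(\to) \ra \ra = \la F_\subset E_\subset(\to) \ra$ when $E_\subset$ preserves transfer systems, with the dual case handled symmetrically. You fill in a bit more detail than the paper in spelling out the adjunction chain, but the decomposition and the key observations are the same.
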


\begin{proof}For $(1)$, the adjunction $F_L \dashv F^{-1}_R$ follows from the adjunction $F_\subset \dashv (F_\subset)^{-1}$ and the adjointness properties of $\la \cdot \ra$ and $\ra \! \cdot \! \la$. If $F_{\subset}$ preserves transfer systems, then applying $\la \cdot \ra$ does nothing to $F_{\subset}(\to)$, and similarly for $(F_{\subset})^{-1}$.

Now for $(2)$. Suppose $E : \b{Sub}(G) \to \b{Sub}(G')$ and $F : \b{Sub}(G') \to \b{Sub}(G'')$ are order-preserving. For any $G$-transfer system $\to$, we have $E_{\subset}(\to) \subset E_L(\to)$, and hence $(FE)_{\subset}(\to) \subset F_{\subset} E_L(\to) \subset F_L E_L(\to)$. It follows $(FE)_L(\to) \subset F_L E_L(\to)$. Dually, $E^{-1}_R F^{-1}_R(\rightsquigarrow) \subset (FE)_R^{-1}(\rightsquigarrow)$ for every $G''$-transfer system $\rightsquigarrow$.

Suppose further that $(FE)_L = F_L E_L$. Then by the uniqueness of adjoints, the functors $E_R^{-1} F_R^{-1}$ and $(FE)_R^{-1}$ are naturally isomorphic maps $\b{Tr}(G'') \rightrightarrows \b{Tr}(G)$, but the codomain is a poset. Therefore $E_R^{-1} F_R^{-1} = (FE)_R^{-1}$. The argument when $(FE)^{-1}_R = E^{-1}_R F^{-1}_R$ is dual.

For $(3)$, suppose that $E_\subset$ preserves transfer systems. Then
	\[
	F_L E_L (\to) = \la F_\subset \la E_\subset (\to) \ra \ra = \la F_\subset E_\subset (\to) \ra  = (FE)_L(\to)
	\]
for every $G$-transfer system $\to$. The equality $(FE)^{-1}_R = E^{-1}_R F^{-1}_R$ follows from $(2)$. The argument when $(F_\subset)^{-1}$ preserves transfer systems is dual.
\end{proof}

Specializing Definition \ref{defn:tsFiminv} to the case where $F$ is the image or inverse image map associated to a group homomorphism $f : G \to G'$ yields the corresponding image and inverse image maps for transfer systems.

\begin{defn}\label{defn:iminvimts} Let $f : G \to G'$ be a homomorphism between finite groups. Taking $F = f : \b{Sub}(G) \to \b{Sub}(G')$ in Definition \ref{defn:tsFiminv} determines an adjunction
	\[
	f_L : \b{Tr}(G) \rightleftarrows \b{Tr}(G') : f^{-1}_R
	\]
and taking $F = f^{-1} : \b{Sub}(G') \to \b{Sub}(G)$ determines another adjunction.
	\[
	f^{-1}_L := (f^{-1})_L : \b{Tr}(G') \rightleftarrows \b{Tr}(G) : (f^{-1})^{-1}_R =: f_R.
	\]
\end{defn}

The functoriality of $(-)_L$, $(-)_R$, $(-)^{-1}_L$, and $(-)^{-1}_R$ does not immediately follow from Lemma \ref{lem:iminvimts}. These constructions preserve identity morphisms, but for any pair of composable group homomorphisms
	\[
	G \stackrel{h}{\longrightarrow} G' \stackrel{k}{\longrightarrow} G'',
	\]
and transfer systems $\to \,\, \in \b{Tr}(G)$ and $\rightsquigarrow \,\, \in \b{Tr}(G'')$, Lemma \ref{lem:iminvimts} only implies that
	\begin{align*}
	(kh)_L(\to) \subset k_L h_L(\to)	&\quad , \quad	(kh)^{-1}_R(\rightsquigarrow) \supset h^{-1}_R k^{-1}_R(\rightsquigarrow)	\\
	(kh)^{-1}_L(\rightsquigarrow) \subset h^{-1}_L k^{-1}_L(\rightsquigarrow)	&\quad , \quad	(kh)_R(\to) \supset k_R h_R(\to) .
	\end{align*}
We establish the reverse inclusions by analyzing the precise constructions of $h_L(\to)$ and $k^{-1}_L(\rightsquigarrow)$. These transfer systems are slightly less complicated than general theory predicts.

\begin{lem}\label{lem:Liminv} Suppose $f : G \to G'$ is a homomorphism between finite groups.
	\begin{enumerate}
		\item{}For any $G$-transfer system $\to$, the $G'$-transfer system $f_L(\to)$ is the reflexive and transitive closure of the relation
			\[
			\bigcup_{K \to H} \Big\{ (g (fK) g^{-1} , g (fH) g^{-1} ) \, \Big| \, g \in G'  \Big\}.
			\]
		\item{}For any $G'$-transfer system $\rightsquigarrow$, the $G$-transfer system $f^{-1}_L(\rightsquigarrow)$ is the reflexive and transitive closure of the relation
			\[
			\bigcup_{K' \rightsquigarrow H'} \Big\{(f^{-1} K' \cap L , L) \, \Big| \, L \subset f^{-1} H' \Big\}.
			\]
	\end{enumerate}
\end{lem}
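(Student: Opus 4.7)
My plan is to prove both parts by the same strategy. In each case, let $R$ denote the generating relation described on the right-hand side and let $R'$ be its reflexive and transitive closure. The inclusion $R' \subset f_L(\to)$ (respectively $R' \subset f^{-1}_L(\rightsquigarrow)$) is immediate: $R$ is contained in the target transfer system, which is already reflexive and transitive. For the reverse inclusion, I would verify that $R'$ is itself a transfer system containing $f_\subset(\to)$ (respectively $(f^{-1})_\subset(\rightsquigarrow)$), and then invoke minimality of $\la \cdot \ra$ from \cite[Theorem B.2]{RubChar}. Refinement of inclusion, reflexivity, transitivity, and antisymmetry will all be essentially automatic, so the content lies in verifying closure under conjugation and restriction.

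For part (1), conjugation closure is built into the definition of $R$, so the real work is restriction closure. The key computation: given $(A,B) = (g(fK)g^{-1}, g(fH)g^{-1}) \in R$ arising from $K \to H$, and any $L' \subset B$, set $M = g^{-1} L' g \subset fH$ and $N = f^{-1}(M) \cap H \subset H$. Applying the restriction axiom for $\to$ yields $K \cap N \to N$ in $G$. A short set-theoretic check, using $M \subset fH$, gives $f(N) = M$ and $f(K \cap N) = fK \cap M$. Applying $f_\subset$ and then conjugating by $g$ produces $(A \cap L', L') \in R$. Thus $R$ itself is already restriction-closed on single steps. To propagate this through the transitive closure, I would argue by induction on chain length: given a chain $K' = A_0 \,R\, A_1 \,R\, \cdots \,R\, A_n = H'$ and $L' \subset H'$, the single-step closure gives $A_i \cap L' \,R\, A_{i+1} \cap L'$ for each $i$ (using $L' \subset A_{i+1}$ inductively, which holds because $R$ refines inclusion), and composing yields $(K' \cap L', L') \in R'$.

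For part (2), the situation is dual. Restriction closure of $R$ is immediate on the nose: if $(f^{-1} K' \cap L, L) \in R$ with $M \subset L$, then $M \subset L \subset f^{-1} H'$ and $(f^{-1} K' \cap L) \cap M = f^{-1} K' \cap M$, so $(f^{-1} K' \cap M, M) \in R$ as well. The subtle point here is conjugation closure, which I would handle using the identity $g(f^{-1} K')g^{-1} = f^{-1}(f(g) K' f(g)^{-1})$ (a quick direct check) together with the fact that $\rightsquigarrow$ is closed under $G'$-conjugation. Combining, if $(f^{-1} K' \cap L, L) \in R$ and $g \in G$, then conjugating gives $(f^{-1}(f(g) K' f(g)^{-1}) \cap g L g^{-1}, g L g^{-1})$, which is in $R$ via the relation $f(g) K' f(g)^{-1} \rightsquigarrow f(g) H' f(g)^{-1}$. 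Propagation through chains goes as in part (1).

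The main obstacle I anticipate is the bookkeeping in part (1) of tracking the correspondence between subgroups of $g(fH)g^{-1}$ and subgroups of $H$ via the pullback $L' \mapsto f^{-1}(g^{-1} L' g) \cap H$, and in particular verifying that $f$ sends $K \cap N$ exactly onto $fK \cap M$ (rather than a proper subset, which is what would normally happen for noninjective $f$); the point is that the restriction to the preimage of $M$ exactly undoes the failure of $f$ to be injective, as long as $M \subset fH$. Once this computation is in hand, everything else is formal.
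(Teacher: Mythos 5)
Your proposal is correct and takes essentially the same approach as the paper's proof: both hinge on the same two explicit verifications, namely that $f_{\subset}(\to)$ is already closed under restriction (via the pullback $L=f^{-1}L'\cap H$ and the identity $f(K\cap L)=fK\cap L'$) and that $(f^{-1})_{\subset}(\rightsquigarrow)$ is already closed under conjugation (via $\alpha(f^{-1}K')\alpha^{-1}=f^{-1}(f(\alpha)K'f(\alpha)^{-1})$). The paper is somewhat leaner, carrying out the computation at the level of $f_{\subset}(\to)$ before conjugation and delegating compatibility of the closure operations to the cited \cite[Theorem B.2]{RubChar}, whereas you conjugate first and then re-derive the propagation through the reflexive-transitive closure by hand; the content is the same. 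One small slip in your chain argument: the parenthetical ``using $L'\subset A_{i+1}$ inductively'' is not right, since this inclusion generally fails for $i<n-1$; what you actually use is $A_i\subset A_{i+1}$ (because $R$ refines inclusion) to simplify $A_i\cap(A_{i+1}\cap L')$ to $A_i\cap L'$ after restricting $(A_i,A_{i+1})$ along $A_{i+1}\cap L'\subset A_{i+1}$, and with that correction your conclusion $(K'\cap L',L')\in R'$ goes through.
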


\begin{proof} The relation $f_L(\to)$ is obtained by closing $f_{\subset}(\to)$ under conjugation and restriction, and then taking the reflexive and transitive closure, but $f_{\subset}(\to)$ is already closed under restriction. If $(fK,fH) \in f_\subset(\to)$ for some $K \to H$, and $L' \subset fH$, then for $L = f^{-1} L' \cap H$ we have $K \cap L \to L$, and therefore $(fK \cap L' , L') = (f(K \cap L) , fL) \in f_\subset(\to)$. Claim (1) follows.

For claim (2), it is enough to show $(f^{-1})_{\subset}(\rightsquigarrow)$ is closed under conjugation. If $(f^{-1} K' , f^{-1} H') \in (f^{-1})_\subset(\rightsquigarrow)$ for some $K' \rightsquigarrow H'$, and $\alpha \in G$, then we have $f(\alpha) K' f(\alpha)^{-1} \rightsquigarrow f(\alpha) H' f(\alpha)^{-1}$, and therefore $(\alpha (f^{-1} K') \alpha^{-1} , \alpha (f^{-1} H') \alpha^{-1}) = ( f^{-1}(f(\alpha) K' f(\alpha)^{-1}) , f^{-1}( f(\alpha) H' f(\alpha)^{-1}) ) \in (f^{-1})_\subset(\rightsquigarrow)$. 
\end{proof}

These simplifications buy us just enough room to establish functoriality.

\begin{prop}\label{prop:revinciminvim} Suppose that $h : G \to G'$ and $k : G' \to G''$ are homomorphisms between finite groups, and $\to \,\, \in \b{Tr}(G)$ and $\rightsquigarrow \,\, \in \b{Tr}(G'')$ are transfer systems. Then:
	\begin{enumerate}
		\item{}$k_L h_L(\to) = (kh)_L(\to)$ and $k_R h_R(\to) = (kh)_R(\to)$, and
		\item{}$h^{-1}_L k^{-1}_L(\rightsquigarrow) = (kh)^{-1}_L(\rightsquigarrow)$ and $h^{-1}_R k^{-1}_R ( \rightsquigarrow) = (kh)^{-1}_R (\rightsquigarrow)$.
	\end{enumerate}
\end{prop}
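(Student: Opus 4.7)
The plan is to reduce the four claims to two $L$-inclusions and then invoke Lemma \ref{lem:iminvimts}(2). The opposite inclusions
	\[
	(kh)_L \subset k_L h_L, \qquad (kh)^{-1}_L \subset h^{-1}_L k^{-1}_L
	\]
already follow from Lemma \ref{lem:iminvimts}(2), so it suffices to establish
	\[
	k_L h_L(\to) \,\subset\, (kh)_L(\to) \qquad\t{and}\qquad h^{-1}_L k^{-1}_L(\rightsquigarrow) \,\subset\, (kh)^{-1}_L(\rightsquigarrow).
	\]
Once both of these equalities are proved, the linking clause of Lemma \ref{lem:iminvimts}(2), applied once to $(E,F) = (h,k)$ and once to $(E,F) = (k^{-1}, h^{-1})$, upgrades them to the two $R$-equalities $(kh)_R = k_R h_R$ and $(kh)^{-1}_R = h^{-1}_R k^{-1}_R$.

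For the first inclusion I will use Lemma \ref{lem:Liminv}(1) twice. The relation $k_L h_L(\to)$ is the reflexive and transitive closure of pairs of the form $(g'(kK')g'^{-1}, g'(kH')g'^{-1})$ with $K' \to_{h_L(\to)} H'$ in $G'$ and $g' \in G''$, and each such $K' \to_{h_L(\to)} H'$ in turn decomposes into a chain of generators $(g(hK)g^{-1}, g(hH)g^{-1})$ with $K \to H$ and $g \in G'$. Pushing such a step forward under $k$ and conjugating by $g'$ produces
	\[
	\bigl((g'k(g))(kh)(K)(g'k(g))^{-1}, \, (g'k(g))(kh)(H)(g'k(g))^{-1}\bigr),
	\]
which is visibly a generator of $(kh)_L(\to)$ by Lemma \ref{lem:Liminv}(1) again, since $g'k(g) \in G''$. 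Concatenating the resulting chains gives the desired inclusion.

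For the second inclusion I will use Lemma \ref{lem:Liminv}(2). A typical generator of $h^{-1}_L k^{-1}_L(\rightsquigarrow)$ has the form $(h^{-1}K' \cap L, L)$ with $K' \to_{k^{-1}_L(\rightsquigarrow)} H'$ in $G'$ and $L \subset h^{-1}H'$. Decompose $K' \to_{k^{-1}_L(\rightsquigarrow)} H'$ into a chain of generators $K' = P_0 \subset P_1 \subset \cdots \subset P_n = H'$ with $P_{s-1} = k^{-1}K''_s \cap M_s$ and $P_s = M_s$ for some $K''_s \rightsquigarrow H''_s$ and $M_s \subset k^{-1}H''_s$. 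Define $L_s = h^{-1}P_s \cap L$, so $L_0 = h^{-1}K' \cap L$ and $L_n = L$. A direct computation gives
	\[
	L_{s-1} = h^{-1}(k^{-1}K''_s \cap M_s) \cap L = (kh)^{-1}K''_s \cap L_s,
	\]
and the containment $L_s \subset h^{-1}M_s \subset (kh)^{-1}H''_s$ shows that each pair $(L_{s-1}, L_s)$ is a generator of $(kh)^{-1}_L(\rightsquigarrow)$; chaining yields the inclusion.

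The only real obstacle is bookkeeping: unwinding the closure operations in Lemma \ref{lem:Liminv} correctly so that a chain in the composite construction matches, step by step, with a chain of generators for the single construction. The identities $k \circ h = kh$ on subgroups, $h^{-1} \circ k^{-1} = (kh)^{-1}$ on subgroups, and the observation that the restriction operation $P \mapsto h^{-1}P \cap L$ preserves intersections with $h^{-1}M_s$, do all the real work.
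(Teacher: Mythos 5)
Your proof is correct and takes essentially the same route as the paper: reduce to the two $L$-inclusions, unwind the reflexive-transitive closures via Lemma \ref{lem:Liminv}, push a chain of generators through $k$ or $h^{-1}$ step by step, and then transfer to the $R$-side using Lemma \ref{lem:iminvimts}(2). The only cosmetic difference is that you work directly with the generators of $k_L h_L(\to)$ (which already carry the outer conjugation by $g' \in G''$) and of $h^{-1}_L k^{-1}_L(\rightsquigarrow)$ (which already carry the outer intersection with $L$), whereas the paper first establishes $k_\subset(h_L(\to)) \subset (kh)_L(\to)$ and $(h^{-1})_\subset(k^{-1}_L(\rightsquigarrow)) \subset (kh)^{-1}_L(\rightsquigarrow)$ and then invokes the universal property of $\langle \cdot \rangle$ to absorb the extra closure.
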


\begin{proof} It will be enough to show that $k_L h_L(\to) \subset (kh)_L(\to)$ and $h^{-1}_L k^{-1}_L(\rightsquigarrow) \subset (kh)^{-1}_L(\rightsquigarrow)$. For the former inclusion, suppose $(K',H') \in h_L(\to)$. By Lemma \ref{lem:Liminv}, there is a sequence of subgroups $H'_0,\dots,H'_n \subset G'$ such that $K' = H_0'$, $H' = H_n'$, and if $0 \leq i < n$, then
	\[
	(H_i',H_{i+1}') = ( g_i (h K_i) g_i^{-1} , g_i (h H_i) g_i^{-1} )
	\]
for some $g_i \in G'$ and $K_i,H_i \subset G$ that satisfy $K_i \to H_i$. Apply $k : G' \to G''$ to the subgroups $H_i'$. Then, since conjugation is preserved under images, we have
	\[
	(k(H_i'),k(H_{i+1}')) = ( k(g_i) (kh K_i) k(g_i)^{-1} , k(g_i) (kh H_i) k(g_i)^{-1} ) \in (kh)_L(\to) ,
	\]
and thus $(kK', kH') \in (kh)_L(\to)$ by transitivity. This shows that $k_{\subset}(h_L(\to)) \subset (kh)_L(\to)$, and the inclusion $k_L(h_L(\to)) = \la k_{\subset}(h_L(\to)) \ra \subset (kh)_L(\to)$ follows.

 The proof of the inclusion $h^{-1}_L k^{-1}_L(\rightsquigarrow) \subset (kh)^{-1}_L(\rightsquigarrow)$ is similar. The inclusion $(h^{-1})_{\subset}(k^{-1}_L(\rightsquigarrow)) \subset (kh)^{-1}_L(\rightsquigarrow)$ holds because $h^{-1}$ preserves intersections, and then $h^{-1}_L(k^{-1}_L(\rightsquigarrow)) \subset (kh)^{-1}_L(\rightsquigarrow)$ follows as before.
 
 Thus $(kh)_L = k_L h_L$ and $(kh)^{-1}_L = h^{-1}_L k^{-1}_L$, and the analogous equations for $(-)_R$ and $(-)^{-1}_R$ hold by part (2) of Lemma \ref{lem:iminvimts}.
\end{proof}

 In summary, we obtain the following result.

\begin{prop}\label{prop:iminvimfunts} Let $\b{FinGrp}$ and $\b{FinPos}$ denote the categories of finite groups and finite posets. Then the constructions in Definition \ref{defn:iminvimts} determine functors
	\[
	(-)_L \, , \, (-)_R : \b{FinGrp} \rightrightarrows \b{FinPos}	\quad\t{and}\quad	(-)^{-1}_L \, , \, (-)^{-1}_R : \b{FinGrp}^{\t{op}} \rightrightarrows \b{FinPos}
	\]
such that for any homomorphism $f : G \to G'$ in $\b{FinGrp}$, there are order adjunctions $f_L \dashv f^{-1}_R$ and $f^{-1}_L \dashv f_R$.
\end{prop}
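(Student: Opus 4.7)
The plan is to assemble Proposition \ref{prop:iminvimfunts} from the results already established, since the substantive content (well-definedness of the maps on transfer systems, the adjunction properties, and the compatibilities with composition) has been proven piecemeal in Lemma \ref{lem:iminvimts} and Proposition \ref{prop:revinciminvim}. The remaining task is primarily bookkeeping: verify preservation of identities, invoke Proposition \ref{prop:revinciminvim} for composition, and cite Lemma \ref{lem:iminvimts}(1) for the adjunctions.

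First I would check that each of $(-)_L$, $(-)_R$, $(-)^{-1}_L$, $(-)^{-1}_R$ takes values in $\b{FinPos}$ at the level of objects: for any finite group $G$, the lattice $\b{Tr}(G)$ is a finite poset under refinement, so this is immediate. For morphisms, Definition \ref{defn:iminvimts} together with Lemma \ref{lem:iminvimts}(1) shows that each homomorphism $f$ induces an order-preserving map on transfer systems, with $f_L \dashv f^{-1}_R$ and $f^{-1}_L \dashv f_R$ forming order adjunctions; this handles the adjunction claim.

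For functoriality, preservation of identities is immediate: if $f = 1_G$, then the set map $f_\subset : \b{Sub}(G)_\subset \to \b{Sub}(G)_\subset$ is the identity and preserves transfer systems (trivially), so by the last sentence of Lemma \ref{lem:iminvimts}(1) we have $(1_G)_L = (1_G)_\subset = \t{id}$ and $(1_G)^{-1}_R = (1_G)^{-1}_\subset = \t{id}$; the analogous argument using $(1_G)^{-1}$ handles $(1_G)^{-1}_L$ and $(1_G)_R$. Compatibility with composition is exactly the content of Proposition \ref{prop:revinciminvim}: given composable homomorphisms $G \stackrel{h}{\to} G' \stackrel{k}{\to} G''$, that proposition gives $(kh)_L = k_L h_L$, $(kh)_R = k_R h_R$, $(kh)^{-1}_L = h^{-1}_L k^{-1}_L$, and $(kh)^{-1}_R = h^{-1}_R k^{-1}_R$. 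Together with contravariance of the inverse image maps (built into the indexing $\b{FinGrp}^{\t{op}}$), this establishes the four functorialities.

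There is no real obstacle, since the only delicate steps were the reverse inclusions in Proposition \ref{prop:revinciminvim}; the present proposition simply packages the preceding lemmas. One minor point worth articulating explicitly is that $(-)_R$ and $(-)^{-1}_L$ are obtained from $(-)_L$ and $(-)^{-1}_R$ applied to the set map $f^{-1} : \b{Sub}(G') \to \b{Sub}(G)$, so the functoriality of $(-)_R$ and $(-)^{-1}_L$ as functors out of $\b{FinGrp}$ (respectively $\b{FinGrp}^{\t{op}}$) follows formally from that of $(-)_L$ and $(-)^{-1}_R$ together with the fact that inverse image interchanges composition direction.
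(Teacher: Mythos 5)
Your proposal is correct and follows the same route as the paper: the statement is presented as a summary of Lemma \ref{lem:iminvimts}, Lemma \ref{lem:Liminv}, and Proposition \ref{prop:revinciminvim}, and your assembly (preservation of identities via Lemma \ref{lem:iminvimts}(1), composition via Proposition \ref{prop:revinciminvim}, adjunctions again via Lemma \ref{lem:iminvimts}(1)) is exactly what the paper's ``In summary, we obtain the following result'' is implicitly invoking. Your explicit identity check is a small elaboration of the paper's terse remark that ``these constructions preserve identity morphisms,'' but it is the correct justification.
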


We now examine the relationship between $f^{-1}_L$ and $f^{-1}_R$. As illustrated in Examples \ref{ex:leftiminv} and \ref{ex:rightiminv}, the transfer systems $f^{-1}_L(\to)$ and $f^{-1}_R(\to)$ need not be equal. In fact, they can be maximally far apart.

\begin{ex}\label{ex:invimsneqts} Let $! : G \to 1$ be the unique morphism. There is only one transfer system $\to \,\, \in \b{Tr}(1)$, and it is both initial and terminal. Applying $!^{-1}_L$ yields the initial $G$-transfer system, because $!^{-1}_L$ is a left adjoint, and applying $!^{-1}_R$ yields the terminal transfer system.
\end{ex}

This sort of inequality holds in general.

\begin{prop}\label{prop:lrinvimineq} Suppose that $f : G \to G'$ is a homomorphism between finite groups. Then for any $\rightsquigarrow \,\, \in \b{Tr}(G')$, we have $f^{-1}_L(\rightsquigarrow) \subset f^{-1}_R(\rightsquigarrow)$.
\end{prop}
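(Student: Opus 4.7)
The plan is to exploit the universal characterization of $f^{-1}_R(\rightsquigarrow)$ furnished by Proposition \ref{prop:cogents}: it is the largest $G$-transfer system contained in the preorder $(f_{\subset})^{-1}(\rightsquigarrow) = \{(K,H) \in \b{Sub}(G)_\subset \, | \, fK \rightsquigarrow fH\}$. Since $f^{-1}_L(\rightsquigarrow)$ is already known to be a $G$-transfer system, it therefore suffices to prove the containment
\[
f^{-1}_L(\rightsquigarrow) \,\,\subset\,\, (f_{\subset})^{-1}(\rightsquigarrow),
\]
i.e.\ to show that every relation $K \to H$ in $f^{-1}_L(\rightsquigarrow)$ forces $fK \rightsquigarrow fH$.

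To do this I would appeal to Lemma \ref{lem:Liminv}(2), which describes $f^{-1}_L(\rightsquigarrow)$ explicitly as the reflexive and transitive closure of the set of pairs $(f^{-1}K' \cap L, L)$ with $K' \rightsquigarrow H'$ and $L \subset f^{-1} H'$. The relation $(f_\subset)^{-1}(\rightsquigarrow)$ is manifestly reflexive and transitive, inheriting both properties from $\rightsquigarrow$, so only the generators need to be checked. The key point is the set-theoretic identity
\[
f(f^{-1}K' \cap L) \,=\, \{ f(x) \,|\, x \in L,\, f(x) \in K'\} \,=\, K' \cap fL,
\]
which holds because taking images commutes with this particular kind of intersection. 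Since $L \subset f^{-1}H'$ gives $fL \subset H'$, the restriction axiom applied to $K' \rightsquigarrow H'$ yields $K' \cap fL \rightsquigarrow fL$, which is exactly the statement that the generator $(f^{-1}K' \cap L, L)$ lies in $(f_\subset)^{-1}(\rightsquigarrow)$. With this verification, the inclusion $f^{-1}_L(\rightsquigarrow) \subset f^{-1}_R(\rightsquigarrow)$ follows from the maximality in Proposition \ref{prop:cogents}, and there is no substantive obstacle beyond keeping the two definitions straight.
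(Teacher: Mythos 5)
Your proof is correct, and it routes through the adjunction in the opposite direction from the paper. The paper's proof shows that the generating relation $(f^{-1})_\subset(\rightsquigarrow)$ is already contained in $f^{-1}_R(\rightsquigarrow) = \,\,\ra (f_\subset)^{-1}(\rightsquigarrow) \la$ by checking directly, for each pair $(f^{-1}K', f^{-1}H')$ together with arbitrary $g \in G$ and $L \subset gHg^{-1}$, that $(gKg^{-1} \cap L, L)$ lands in $(f_\subset)^{-1}(\rightsquigarrow)$; the desired inclusion then follows because $\la \cdot \ra$ produces the \emph{smallest} transfer system containing the generators. You instead show $f^{-1}_L(\rightsquigarrow) \subset (f_\subset)^{-1}(\rightsquigarrow)$ and invoke the \emph{maximality} property from Proposition \ref{prop:cogents}. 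The key set-theoretic identity $f(f^{-1}K' \cap L) = K' \cap fL$ is the same in both arguments (the paper just conjugates it by $f(g)$). What your version buys you is that Lemma \ref{lem:Liminv}(2) has already absorbed the conjugation and restriction closure into the description of $f^{-1}_L(\rightsquigarrow)$, so you only need to check generators modulo reflexive--transitive closure, making the verification a bit leaner; the paper's version is self-contained and does not lean on Lemma \ref{lem:Liminv}. Both are clean and correct.
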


\begin{proof} For any $\rightsquigarrow \,\, \in \b{Tr}(G')$, we claim that $(f^{-1})_\subset(\rightsquigarrow) \subset \,\, \ra (f_\subset)^{-1}(\rightsquigarrow) \la$. For suppose $(K,H) \in (f^{-1})_\subset(\rightsquigarrow)$. Then $(K,H) = (f^{-1} K' , f^{-1} H')$ for some $K' \rightsquigarrow H'$. Given $g \in G$ and $L \subset g H g^{-1}$, we must check that $( gKg^{-1} \cap L , L) \in (f_\subset)^{-1}(\rightsquigarrow)$. We have $f(gKg^{-1} \cap L) = f(g) K' f(g)^{-1} \cap fL$, where $f(g) \in G'$ and $fL \subset  f(g) H' f(g)^{-1}$. Since $K' \rightsquigarrow H'$ and $\rightsquigarrow$ is a transfer system, we also have $f(g) K' f(g)^{-1} \cap fL \rightsquigarrow fL$. Therefore $(f^{-1})_\subset(\rightsquigarrow) \subset \,\, \ra (f_\subset)^{-1}(\rightsquigarrow) \la$, and $f^{-1}_L(\rightsquigarrow) \subset f^{-1}_R(\rightsquigarrow)$ follows.
\end{proof}

Moreover, we can completely characterize when $f^{-1}_L = f^{-1}_R$. First, a lemma.

\begin{lem}\label{lem:injinvimts}Suppose that $m : G \to G'$ is an injective homomorphism between finite groups. Then for every $\rightsquigarrow \,\, \in \b{Tr}(G')$, the relation $(m_\subset)^{-1}(\rightsquigarrow)$ is a $G$-transfer system, and there is an equality $(m_\subset)^{-1}(\rightsquigarrow) = (m^{-1})_\subset(\rightsquigarrow)$.
\end{lem}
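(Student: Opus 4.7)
The plan is to verify the transfer system axioms directly for $R := (m_\subset)^{-1}(\rightsquigarrow)$, and then establish the equality with $(m^{-1})_\subset(\rightsquigarrow)$ by a double inclusion. Both parts are essentially mechanical, with injectivity of $m$ entering at two specific spots.

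For the first claim, $R$ consists of pairs $(K,H)$ with $K \subset H$ and $mK \rightsquigarrow mH$, so it refines inclusion by definition and inherits reflexivity, antisymmetry, and transitivity from $\rightsquigarrow$. Conjugation closure follows from $m$ being a homomorphism: $m(gKg^{-1}) = m(g)\cdot mK \cdot m(g)^{-1}$, combined with conjugation closure of $\rightsquigarrow$. For restriction closure, given $(K,H) \in R$ and $L \subset H$, the restriction axiom for $\rightsquigarrow$ applied to $mK \rightsquigarrow mH$ and the subgroup $mL \subset mH$ yields $mK \cap mL \rightsquigarrow mL$; here injectivity of $m$ gives the set-theoretic identity $m(K \cap L) = mK \cap mL$, so $(K \cap L , L) \in R$.

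For the equality $(m_\subset)^{-1}(\rightsquigarrow) = (m^{-1})_\subset(\rightsquigarrow)$, the inclusion $\subset$ uses injectivity directly: if $(K,H) \in (m_\subset)^{-1}(\rightsquigarrow)$, then $K = m^{-1}(mK)$ and $H = m^{-1}(mH)$, so $(K,H)$ is the image of the pair $(mK,mH)$ under $(m^{-1})_\subset$, and $mK \rightsquigarrow mH$ by hypothesis. For the reverse inclusion $\supset$, take $K' \rightsquigarrow H'$; we need $m(m^{-1}K') \rightsquigarrow m(m^{-1}H')$. For any set map we have $m(m^{-1}X') = X' \cap m(G)$, so the required relation reduces to $K' \cap m(G) \rightsquigarrow H' \cap m(G)$, which is simply the restriction of $K' \rightsquigarrow H'$ to the subgroup $H' \cap m(G) \subset H'$.

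There is no serious obstacle; the main point is bookkeeping about where injectivity actually gets used. Both the restriction closure in step one and the $\subset$ inclusion in step two fail without it (consistent with Proposition \ref{prop:lrinvimineq} and Example \ref{ex:invimsneqts}), whereas the $\supset$ inclusion and the other axioms hold for any homomorphism.
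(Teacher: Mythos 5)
Your proof is correct and follows essentially the same route as the paper: verify the transfer-system axioms for $(m_\subset)^{-1}(\rightsquigarrow)$ using $m(K\cap L)=mK\cap mL$ for restriction closure, then establish the two inclusions via $m^{-1}m=\mathrm{id}$ and $mm^{-1}X'=X'\cap m(G)$ together with restriction along $m(G)\cap H'\subset H'$. Your added bookkeeping of exactly where injectivity enters (restriction closure and the $\subset$ inclusion) is accurate and consistent with the discussion around Proposition \ref{prop:lrinvimineq}.
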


\begin{proof} As observed just prior to Definition \ref{defn:tsFiminv}, the relation $(m_\subset)^{-1}(\rightsquigarrow)$ is a partial order on $\b{Sub}(G)$ that refines inclusion. It is closed under conjugation because conjugation is preserved under images, and it is closed under restriction because if $(K,H) \in (m_\subset)^{-1}(\rightsquigarrow)$ and $L \subset H$, then $(m(K \cap L) , m(L) ) = (m(K) \cap m(L) , m(L) ) \in \,\, \rightsquigarrow$ because $m$ is injective. Thus $(m_\subset)^{-1}(\rightsquigarrow)$ is a $G$-transfer system. 

The inclusion $(m_\subset)^{-1}(\rightsquigarrow) \subset (m^{-1})_\subset(\rightsquigarrow)$ also follows from the injectivity of $m$, because if $(mK , mH) \in \,\, \rightsquigarrow$, then $(K,H) = (m^{-1}m K , m^{-1} m H) \in (m^{-1})_\subset(\rightsquigarrow)$. The other inclusion $(m_\subset)^{-1}(\rightsquigarrow) \supset (m^{-1})_\subset(\rightsquigarrow)$ holds because if $(K',H') \in \,\, \rightsquigarrow$, then $(m m^{-1} K' , m m^{-1} H') = (m(G) \cap K' , m(G) \cap H') \in \,\, \rightsquigarrow$ since $\rightsquigarrow$ is closed under restriction along $m(G) \cap H' \subset H'$.
\end{proof}

\begin{prop}\label{prop:injhomots} Suppose that $f : G \to G'$ is a homomorphism between finite groups. Then the following are equivalent:
	\begin{enumerate}
		\item{}$f$ is injective.
		\item{}$f^{-1}_L = f^{-1}_R$.
		\item{}$f^{-1}_L$ has a left adjoint.
	\end{enumerate}
Moreover, if $f$ is noninjective, then there is a strict inequality $f^{-1}_L(\rightsquigarrow) \subsetneq f^{-1}_R(\rightsquigarrow)$ for every $G'$-transfer system $\rightsquigarrow$.
\end{prop}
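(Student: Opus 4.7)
The strategy is to prove the ``Moreover'' clause first, since it immediately yields $(3) \Rightarrow (1)$, after which $(1) \Rightarrow (2)$ and $(2) \Rightarrow (3)$ are short.

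For the ``Moreover'' clause, I would exhibit a single pair separating $f^{-1}_L(\rightsquigarrow)$ from $f^{-1}_R(\rightsquigarrow)$ for every $\rightsquigarrow$, namely $(1, N)$ with $N = \ker f$. That $(1, N) \in f^{-1}_R(\rightsquigarrow)$ is a direct check against Proposition \ref{prop:cogents}: for any $g \in G$ and $L \subset gNg^{-1} = N$, we have $L \subset \ker f$, so $f(L) = 1$, and the pair $(f(1) \cap f(L), f(L)) = (1,1)$ lies in $\rightsquigarrow$ by reflexivity. That $(1, N) \notin f^{-1}_L(\rightsquigarrow)$ is where the real content lies: the explicit description in Lemma \ref{lem:Liminv}(2) says every element of $f^{-1}_L(\rightsquigarrow)$ arises as a finite chain of generating pairs $(a, b) = (f^{-1}(K') \cap L, L)$. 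Because $N \subset f^{-1}(K')$ for every subgroup $K' \subset G'$, each such pair satisfies the implication $b \supset N \Rightarrow a \supset N$. Inducting along a hypothetical chain from $1$ to $N$ would then force $1 \supset N$, contradicting $N \neq 1$.

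The three equivalences follow quickly. For $(1) \Rightarrow (2)$, Lemma \ref{lem:injinvimts} tells us that when $f$ is injective, $(f_\subset)^{-1}(\rightsquigarrow) = (f^{-1})_\subset(\rightsquigarrow)$ is already a $G$-transfer system, so both closure operators $\la \cdot \ra$ and $\ra \cdot \la$ appearing in the definitions of $f^{-1}_L$ and $f^{-1}_R$ act trivially, whence $f^{-1}_L = f^{-1}_R$. For $(2) \Rightarrow (3)$, the functor $f^{-1}_R$ already has $f_L$ as its left adjoint by Definition \ref{defn:iminvimts}, so equality gives $f_L \dashv f^{-1}_L$ as well. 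For $(3) \Rightarrow (1)$, I would argue contrapositively: if $f$ is noninjective, then the ``Moreover'' clause applied to the top element $\top \in \b{Tr}(G')$ produces a strict inequality $f^{-1}_L(\top) \subsetneq f^{-1}_R(\top) = \top$, the last equality holding because $f^{-1}_R$ is a right adjoint and must preserve the top of the poset $\b{Tr}(G)$. Thus $f^{-1}_L$ fails to preserve top and cannot itself be a right adjoint.

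The main obstacle is the ``$\notin$'' half of the ``Moreover'' clause; everything else is bookkeeping around known adjunctions and Lemma \ref{lem:injinvimts}. The underlying conceptual point is that the relations introduced by $f^{-1}_L$ come from pulling back relations in $\rightsquigarrow$, and such pullbacks cannot strictly shrink a subgroup across $\ker f$. By contrast, $f^{-1}_R$ is defined by a universal condition that becomes vacuous on subgroups of $\ker f$, so it freely admits pairs like $(1, N)$.
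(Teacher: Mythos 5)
Your proof is correct and follows the same overall strategy as the paper: establish the strict inequality for noninjective $f$, then use it together with the terminal-preservation criterion to get $(3)\Rightarrow(1)$, with $(1)\Rightarrow(2)$ and $(2)\Rightarrow(3)$ falling out of Lemma \ref{lem:injinvimts} and the standing adjunction $f_L \dashv f^{-1}_R$ respectively. The one substantive difference is in how you exclude $(1,\ker f)$ from $f^{-1}_L(\rightsquigarrow)$: you run a direct induction along the chain supplied by Lemma \ref{lem:Liminv}(2), using the invariant that each generating pair $(f^{-1}K' \cap L, L)$ satisfies $\ker f \subset L \Rightarrow \ker f \subset f^{-1}K' \cap L$, whereas the paper instead appeals to part (2) of \cite[Proposition B.4]{RubChar} to conclude $H \not\subset \ker f$ for every nontrivial $(K,H) \in f^{-1}_L(\rightsquigarrow)$. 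Your version has the advantage of being self-contained; the paper's is shorter but outsources the key closure argument. One small notational slip: in checking $(1,\ker f) \in f^{-1}_R(\rightsquigarrow)$ via Proposition \ref{prop:cogents}, the pair to test in $\rightsquigarrow$ should be $\bigl(f(gKg^{-1}\cap L), f(L)\bigr)$ rather than $\bigl(f(1)\cap f(L), f(L)\bigr)$; both evaluate to $(1,1)$ here, so nothing breaks, but the formula as written does not match the definition.
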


\begin{proof}$(1 \Rightarrow 2)$ follows from Lemma \ref{lem:injinvimts} and $(2 \Rightarrow 3)$ is immediate from the adjunction $f_L \dashv f^{-1}_R$. Now for $(3 \Rightarrow 1)$. Assume that $f$ is not injective. We shall show that $f^{-1}_L$ does not preserve all limits. For any $\rightsquigarrow \,\, \in \b{Tr}(G')$ and $(K,H) \in (f^{-1})_\subset(\rightsquigarrow)$, we have $\t{ker}(f) \subset K$. By part $(2)$ of \cite[Proposition B.4]{RubChar}, it follows that $H \not\subset \t{ker}(f)$ for every nontrivial relation $(K,H) \in f^{-1}_L(\rightsquigarrow)$. On the other hand, if $K \subset H \subset \t{ker}(f)$, then $(K,H) \in \,\, \ra (f_\subset)^{-1}(\rightsquigarrow) \la \,\, = f^{-1}_R(\rightsquigarrow)$ because for any $g \in G$ and $L \subset gHg^{-1}$, we have $f(gHg^{-1} \cap L) \rightsquigarrow fL$ since both sides are the trivial subgroup. It follows that $(1,\t{ker}(f)) \in f^{-1}_R(\rightsquigarrow) \setminus f^{-1}_L(\rightsquigarrow)$, which means the inclusion $f^{-1}_L(\rightsquigarrow) \subset f^{-1}_R(\rightsquigarrow)$ of Proposition \ref{prop:lrinvimineq} is strict. Therefore $f^{-1}_L$ does not preserve the terminal transfer system.
\end{proof}

\begin{cor}If $m : G \to G'$ is an injective homomorphism between finite groups, then there is a chain of order adjunctions $m_L \dashv m^{-1}_R = m^{-1}_L \dashv m_R$.
\end{cor}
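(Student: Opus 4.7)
The plan is to assemble the chain directly from the two preceding results without any further computation. Proposition \ref{prop:iminvimfunts} already supplies, for any homomorphism $f : G \to G'$ between finite groups, two separate order adjunctions $f_L \dashv f^{-1}_R$ and $f^{-1}_L \dashv f_R$. Specializing $f$ to the injective homomorphism $m$, this immediately yields the outer two adjunctions $m_L \dashv m^{-1}_R$ and $m^{-1}_L \dashv m_R$ in the desired chain.

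The only remaining ingredient is to identify the right adjoint of $m_L$ with the left adjoint of $m_R$, so that the two separate adjunctions splice into a single chain of four functors. This is precisely the content of the implication $(1 \Rightarrow 2)$ of Proposition \ref{prop:injhomots}: since $m$ is injective, we have the equality $m^{-1}_L = m^{-1}_R$ as functors $\b{Tr}(G') \to \b{Tr}(G)$. Threading this equality between the two adjunctions above produces the chain $m_L \dashv m^{-1}_R = m^{-1}_L \dashv m_R$.

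There is no genuine obstacle: all of the substantive work has already been done in Propositions \ref{prop:iminvimfunts} and \ref{prop:injhomots}, and this corollary is essentially a bookkeeping exercise. If one wanted to be slightly more self-contained, one could recall that the equality $m^{-1}_L = m^{-1}_R$ rests on Lemma \ref{lem:injinvimts}, which says that for injective $m$ the relation $(m_\subset)^{-1}(\rightsquigarrow)$ is already a transfer system and agrees with $(m^{-1})_\subset(\rightsquigarrow)$, so that both of the closure operations $\la \cdot \ra$ and $\ra \! \cdot \! \la$ act trivially on it. But for the purposes of this corollary, citing the two propositions is enough.
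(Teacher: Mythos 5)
Your proposal is correct and matches the paper's intent exactly: the corollary is stated without proof precisely because it follows immediately by combining the two adjunctions of Proposition \ref{prop:iminvimfunts} with the identification $m^{-1}_L = m^{-1}_R$ from Proposition \ref{prop:injhomots}.
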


If the homomorphism $f : G \to G'$ is non-injective, then the inequality $f^{-1}_L < f^{-1}_R$ reflects a pathology of the operadic $\t{ind} \dashv \t{res}$ adjunction (cf. part $(2)$ of Proposition \ref{prop:quilladjindrescoind}). Going forward, we shall only consider $f_L$ only when $f$ is injective.

\section{Operadic induction, restriction, and coinduction}\label{sec:indrescoind}

In this section, we relate images and inverse images of transfer systems to derived induction, restriction, and coinduction for marked operads. An important precedent to this work appears in \cite[\S6.2]{BH}, where Blumberg and Hill show how to calculate the admissible sets of a coinduced $N_\infty$ operad $\t{coind}_H^G \s{O}$ in terms of the admissible sets of $\s{O}$. We generalize to coinduction along a non-injective map, and we also analyze how restriction and induction behave for combinatorial operads. For any homomorphism $f : G \to G'$ between finite groups, we show that the adjunction $f^{-1}_L \dashv f_R$ of Definition \ref{defn:iminvimts} always lifts to derived restriction and coinduction for marked operads, and if $f$ is injective, we show that the adjunction $f_L \dashv f^{-1}_R = f^{-1}_L$ also lifts to derived induction and restriction (Theorem \ref{thm:iminvimcompatindrescoind}). On the other hand, if $f$ is noninjective, then we do not know how to make induction along $f$ homotopically meaningful because it is not a left Quillen functor (Proposition \ref{prop:quilladjindrescoind}). We briefly describe the situation for $N$ operads in \S\ref{subsec:Nopindrescoind}, and then we conclude by giving topological interpretations of our constructions in \S\ref{subsec:topinterpindrescoind}.

\subsection{Induction, restriction, and coinduction for marked operads}

A $G$-symmetric sequence $S \in \b{Sym}^G$ is the same thing as a nonequivariant symmetric sequence $S \in \b{Sym}$ equipped with a $G$-action through $\Sigma$-equivariant maps. Analogously, a marked $G$-operad $\s{O} \in \b{Op}^G_+$ is the same thing as a nonequivariant marked operad $\s{O} \in \b{Op}_+$ equipped with a $G$-action that preserves the operad structure and the markings. More formally, we have isomorphisms
	\[
	\b{Sym}^G \cong \b{Fun}(BG,\b{Sym})	\quad\t{and}\quad	\b{Op}^G_+ \cong \b{Fun}(BG,\b{Op}_+) ,
	\]
where $BG$ is the one-object category whose morphisms are the group $G$. This means we can define induction, restriction, and coinduction for marked operads and symmetric sequences using the usual Kan extension and pullback functors.

\begin{defn} Suppose that $f : G \to G'$ is a homomorphism between finite groups, and let $Bf : BG \to BG'$ for the corresponding functor on one-object categories. Define operadic induction, restriction, and coinduction functors by
	\begin{align*}
	\t{ind}_f	:=	\t{Lan}_{Bf} : \b{Op}^G_+ &\longrightarrow \b{Op}^{G'}_+	\\
		\b{Op}^{G}_+ &\longleftarrow \b{Op}^{G'}_+ : (Bf)^* =: \t{res}_f	\\
	\t{coind}_f	:=	\t{Ran}_{Bf} : \b{Op}^G_+	&\longrightarrow	\b{Op}^{G'}_+ ,
	\end{align*}
and similarly for symmetric sequences. The adjunctions $\t{ind}_f \dashv \t{res}_f \dashv \t{coind}_f$ follow formally from the universal properties of left and right Kan extension.
\end{defn}

The end and coend formulas imply that $\t{coind}_f$ and $\t{ind}_f$ are given by the familiar equalizers and coequalizers
	\[
	\t{coind}_f X \cong \t{eq} \Big( \prod_{G'} X \rightrightarrows \prod_{G'} \prod_G X \Big)
	\quad\t{and}\quad
	\t{ind}_f X \cong \t{coeq}\Big( \coprod_{G'} \coprod_{G} X \rightrightarrows \coprod_{G'} X \Big) ,
	\]
where $X$ is either an object of $\b{Sym}^G$ or $\b{Op}^G_+$, and all products and coproducts are taken in the corresponding category. In particular, the coproduct in $\b{Op}^G_+$ is an operadic wedge, which is analogous to an amalgamated free product of groups.

We shall derive the adjunctions $\t{ind}_f \dashv \t{res}_f \dashv \t{coind}_f$ using the model category structure for marked operads described in \S\ref{sec:background}. To that end, we must understand how $\t{ind}_f$ and $\t{res}_f$ interact with the generating cofibrations $F_+(\varnothing \to \frac{G \times \Sigma_n}{\Gamma})$ and $F_+(\frac{G \times \Sigma_n}{\Gamma} \to \frac{G \times \Sigma_n}{\Gamma} \sqcup \frac{G \times \Sigma_n}{\Gamma})$. Regarding $\b{Sym}^G$ and $\b{Op}^G_+$ as functor categories clarifies the matter. Let $F_+ : \b{Sym} \rightleftarrows \b{Op}_+ : U$ be the free-forgetful adjunction between nonequivariant symmetric sequences and marked operads.  Then for any finite group $G$, the induced adjunction
	\[
	F_+ \circ (-) : \b{Fun}(BG,\b{Sym}) \rightleftarrows \b{Fun}(BG,\b{Op}_+) : U \circ (-)
	\]
is isomorphic to the usual free-forgetful adjunction $F_+ : \b{Sym}^G \rightleftarrows \b{Op}^G_+ : U$, because the right adjoint forgets the operad structure in both cases. This implies the following commutation relations.

\begin{lem}\label{lem:indrescoindcomm} For any homomorphism $f : G \to G'$ between finite groups, there are natural isomorphisms
	\begin{align*}
	\t{ind}_f \circ F_+ \cong F_+ \circ \t{ind}_f	\quad&,\quad	\t{res}_f \circ F_+ \cong F_+ \circ \t{res}_f	\\
	\t{coind}_f \circ U \cong U \circ \t{coind}_f	\quad&,\quad	\t{res}_f \circ U \cong U \circ \t{res}_f ,
	\end{align*}
where $F_+ \dashv U$ denotes the free-forgetful adjunction between symmetric sequences and marked operads for either the group $G$ or the group $G'$.
\end{lem}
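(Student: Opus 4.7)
The plan is to treat the two restriction isomorphisms as essentially formal, and then deduce the induction and coinduction isomorphisms by the uniqueness of adjoints.

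First I would handle the restriction statements $\t{res}_f \circ F_+ \cong F_+ \circ \t{res}_f$ and $\t{res}_f \circ U \cong U \circ \t{res}_f$ directly. Under the identifications $\b{Sym}^G \cong \b{Fun}(BG,\b{Sym})$ and $\b{Op}^G_+ \cong \b{Fun}(BG,\b{Op}_+)$, the restriction functor $\t{res}_f = (Bf)^*$ is just precomposition with $Bf : BG \to BG'$. On the other hand, the paragraph preceding the lemma identifies the equivariant functors $F_+ : \b{Sym}^G \to \b{Op}^G_+$ and $U : \b{Op}^G_+ \to \b{Sym}^G$ with postcomposition by the nonequivariant $F_+$ and $U$. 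Thus, for any $S : BG' \to \b{Sym}$, both $\t{res}_f(F_+ \circ S)$ and $F_+ \circ \t{res}_f(S)$ compute $F_+ \circ S \circ Bf$, and the desired natural isomorphism is just the associativity of functor composition. The same argument applies to $U$.

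Next I would obtain the remaining isomorphisms by passage to adjoints. The isomorphism $\t{res}_f \circ F_+ \cong F_+ \circ \t{res}_f$ is an isomorphism between left adjoints, whose respective right adjoints are $U \circ \t{coind}_f$ and $\t{coind}_f \circ U$ (using the adjunctions $F_+ \dashv U$ for both $G$ and $G'$, and $\t{res}_f \dashv \t{coind}_f$). By the uniqueness of right adjoints up to natural isomorphism, this yields $\t{coind}_f \circ U \cong U \circ \t{coind}_f$. Similarly, $\t{res}_f \circ U \cong U \circ \t{res}_f$ is an isomorphism between right adjoints whose left adjoints are $\t{ind}_f \circ F_+$ and $F_+ \circ \t{ind}_f$, so uniqueness of left adjoints gives $\t{ind}_f \circ F_+ \cong F_+ \circ \t{ind}_f$.

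There is no real obstacle here: once one accepts the identifications $\b{Sym}^G \cong \b{Fun}(BG,\b{Sym})$ and $\b{Op}^G_+ \cong \b{Fun}(BG,\b{Op}_+)$ and the compatibility of these with the free-forgetful adjunction (already noted above the lemma), the restriction statements are tautologies and the induction and coinduction statements are formal consequences. The only point to be slightly careful about is verifying that the right adjoint of $\t{res}_f \circ F_+$ is indeed $U \circ \t{coind}_f$ (and not, say, $\t{coind}_f \circ U$); this is a straightforward composition of adjunctions, $(\t{ind}_f \dashv \t{res}_f)$ stacked on top of $(F_+ \dashv U)$.
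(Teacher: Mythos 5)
Your proof is correct and takes exactly the same route as the paper: establish the two restriction isomorphisms by observing that precomposition by $Bf$ commutes with postcomposition by the nonequivariant $F_+$ and $U$, then obtain the $\t{ind}_f$ and $\t{coind}_f$ isomorphisms by uniqueness of adjoints. Your version simply spells out the adjoint bookkeeping that the paper leaves implicit.
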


\begin{proof} The functor $\t{res}_f$ commutes with $F_+$ and $U$ because pre-composition commutes with post-composition, and the commutation relations for $\t{ind}_f$ and $\t{coind}_f$ follow from the uniqueness of adjoints.
\end{proof}

Thus, we are reduced to studying $\t{ind}_f$ and $\t{res}_f$ on symmetric sequences. Thinking of the components of a $G$-symmetric sequence as $(G \times \Sigma_n)$-sets, we have
	\[
	(\t{res}_f S')_n \cong \t{res}_{f \times \t{id}} S'_n	\quad\t{and}\quad	 (\t{ind}_f S)_n \cong \t{ind}_{f \times \t{id}} S_n
	\]
for every homomorphism $f : G \to G'$, $S \in \b{Sym}^G$, and $S' \in \b{Sym}^{G'}$. We arrive at the following result.

\begin{prop}\label{prop:quilladjindrescoind} Suppose that $f : G \to G'$ is an arbitrary homomorphism between finite groups. Then:
	\begin{enumerate}
		\item{} The adjunction $\t{res}_ f : \b{Op}^{G'}_+ \rightleftarrows \b{Op}^{G}_+: \t{coind}_f$ is a Quillen adjunction.
		\item{} The adjunction $\t{ind}_f :  \b{Op}^G_+ \rightleftarrows \b{Op}^{G'}_+ : \t{res}_f$ is a Quillen adjunction if and only if the homomorphism $f$ is injective.
	\end{enumerate}
\end{prop}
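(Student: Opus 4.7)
The plan is to reduce each part to a check on generators. Since every object of $\b{Op}^G_+$ is fibrant by Theorem \ref{thm:modcat}(1), a left adjoint between these categories is left Quillen iff it sends $\s{I}_+$ and $\s{J}_+$ to cofibrations and acyclic cofibrations, respectively. By Lemma \ref{lem:indrescoindcomm}, $\t{ind}_f$ and $\t{res}_f$ both commute with $F_+$, so the question reduces to how they act on the orbits $(G \times \Sigma_n)/\Gamma$ for graph subgroups $\Gamma$. The essential tool is the observation that $\Gamma \subset G \times \Sigma_n$ is a graph subgroup if and only if $\Gamma \cap (1 \times \Sigma_n) = 1$, or equivalently, $\Sigma_n$ acts freely on $(G \times \Sigma_n)/\Gamma$.

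For part (1), restriction along $f \times \t{id}$ leaves the underlying set and the $\Sigma_n$-action of $(G' \times \Sigma_n)/\Gamma'$ unchanged, modifying only the $G$-action through $f$. Hence $\Sigma_n$-freeness is preserved, so $\t{res}_{f \times \t{id}}((G' \times \Sigma_n)/\Gamma')$ decomposes as a disjoint union $\bigsqcup_i (G \times \Sigma_n)/\Gamma_i$ of graph orbits. Using the identification $F_+(X \sqcup Y) \cong F_+(X) *_{\b{F}} F_+(Y)$ (cf.\ the proof of Proposition \ref{prop:OpG+cop}), the image of each generating cofibration becomes a finite iterated pushout of generating cofibrations, hence a cofibration. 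The same argument applied to $\s{J}_+$ proves the acyclic case, so $\t{res}_f \dashv \t{coind}_f$ is Quillen.

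For the forward direction of part (2), if $f$ is injective then so is $f \times \t{id}$, and it sends a graph subgroup $\Gamma = \{(h,\sigma(h)) \mid h \in H\}$ to $\{(f(h),\sigma(h)) \mid h \in H\}$, which is still a graph subgroup since its projection to $G'$ is injective. The computation $\t{ind}_{f \times \t{id}}((G \times \Sigma_n)/\Gamma) \cong (G' \times \Sigma_n)/(f \times \t{id})(\Gamma)$ then shows that $\t{ind}_f$ carries each generator of $\s{I}_+$ or $\s{J}_+$ to another generator of the same type.

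For the converse, assume $f$ is not injective, pick $1 \neq g \in \ker(f)$, let $H = \la g \ra$, $n = \abs{H}$, and let $\sigma : H \hookrightarrow \Sigma_n$ be the regular representation, so $\Gamma = \{(h,\sigma(h)) \mid h \in H\}$ is a graph subgroup of $G \times \Sigma_n$. Then $(f \times \t{id})(\Gamma) = \{(1,\sigma(h)) \mid h \in H\}$ has nontrivial intersection with $1 \times \Sigma_n$ and therefore is not a graph subgroup, so $(G' \times \Sigma_n)/(f \times \t{id})(\Gamma)$ fails to be $\Sigma_n$-free. Because the unit $S \to F_+(S)$ includes $S$ into $F_+(S)$ as a sub-symmetric-sequence (via one-vertex trees), $F_+((G' \times \Sigma_n)/(f \times \t{id})(\Gamma))$ is also not $\Sigma$-free, hence not cofibrant by Theorem \ref{thm:modcat}(2). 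Consequently $\t{ind}_f$ sends the generating cofibration $F_+(\varnothing \to (G \times \Sigma_n)/\Gamma)$ to a map with non-cofibrant target, which cannot itself be a cofibration. I expect the delicate step to be tracking exactly when $(f \times \t{id})(\Gamma)$ remains a graph subgroup --- this is the pivot on which the injective/non-injective dichotomy turns.
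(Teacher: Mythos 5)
Your proof is correct and follows essentially the same strategy as the paper: reduce to generators via the commutation relations of Lemma \ref{lem:indrescoindcomm}, observe that restriction preserves $\Sigma$-freeness of orbits so $\t{res}_f$ sends generators to coproducts of generators, and for the converse direction exhibit a graph subgroup whose induction is not $\Sigma$-free by exploiting $\ker(f)$. The only differences are cosmetic: you compute $\t{ind}_{f\times\t{id}}((G\times\Sigma_n)/\Gamma) \cong (G'\times\Sigma_n)/(f\times\t{id})(\Gamma)$ explicitly and take a cyclic subgroup $\la g\ra \subset \ker(f)$ as the witness, whereas the paper works with the regular representation of all of $G$ and directly exhibits a nontrivial element of $\Sigma_n$ fixing the class $[(\t{id},\t{id}),\Gamma]$ in the induced symmetric sequence --- both valid.
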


\begin{proof} We begin with $(1)$. Suppose that the morphism $i = F_+( \varnothing \to \frac{G' \times \Sigma_n}{\Gamma'})$ is a generating cofibration of $\b{Op}^{G'}_+$. By Lemma \ref{lem:indrescoindcomm}, there is an isomorphism $\t{res}_f( i) \cong F_+(\varnothing \to \t{res}_{f \times \t{id}} \frac{G' \times \Sigma_n}{\Gamma'})$, and the pulled back $G \times \Sigma_n$-set $\t{res}_{f \times \t{id}} \frac{G' \times \Sigma_n}{\Gamma'}$ is still $\Sigma_n$-free. Therefore there is a splitting
	\[
	\t{res}_{f \times \t{id}} \Bigg( \frac{G' \times \Sigma_n}{\Gamma'} \Bigg) \cong \coprod_{k = 1}^m \frac{G \times \Sigma_n}{\Gamma_k}
	\]
for some graph subgroups $\Gamma_1 , \dots , \Gamma_m \subset G \times \Sigma_n$. Since $F_+ : \b{Sym}^G \to \b{Op}^G_+$ preserves coproducts, we deduce that $\t{res}_f(i)$ is a coproduct of generating cofibrations, and hence a cofibration in $\b{Op}^G_+$. Inducting up relative cell complexes and passing to retracts proves that $\t{res}_f$ preserves all cofibrations. An analogous argument shows that $\t{res}_f$ also preserves acyclic cofibrations, and therefore $\t{res}_f \dashv \t{coind}_f$ is a Quillen adjunction.

Now for (2). Suppose first that $f$ is injective. Then we may assume $f : G \hookrightarrow G'$ is the inclusion of a subgroup, and that $\t{ind}_{f \times \t{id}} = \t{ind}_{G \times \Sigma_n}^{G' \times \Sigma_n}$ is induction in the usual sense. Arguing as above proves that $\t{ind}_f$ is left Quillen, because it preserves generating (acyclic) cofibrations.

Now suppose that $f$ is not injective. We shall show that $\t{ind}_f$ does not preserve all cofibrant operads. By \cite[Theorem 4.8]{RubComb}, it will be enough to find a cofibrant operad $\s{O} \in \b{Op}^G_+$ such that $\t{ind}_f \s{O}$ is not $\Sigma$-free. Suppose $\abs{G} = n$, and let $\Gamma$ be the graph of a permutation representation $\sigma : G \to \Sigma_{n}$ for $G/e$. Consider the operad $\s{O} = F_+(\frac{G \times \Sigma_{n}}{\Gamma})$, so that $\t{ind}_f \s{O} \cong F_+( \t{ind}_{f \times \t{id}} \frac{G \times \Sigma_{n}}{\Gamma})$. The symmetric sequence
	\[S = \t{ind}_{f \times \t{id}} \Bigg( \frac{G \times \Sigma_{n}}{\Gamma} \Bigg) \cong (G' \times \Sigma_n) \underset{G \times \Sigma_n}{\times} \Bigg( \frac{G \times \Sigma_n}{\Gamma} \Bigg)
	\]
is not $\Sigma$-free because the class $[(\t{id},\t{id}) , \Gamma]$ is fixed by $\sigma(\t{ker}f) \subset \Sigma_n$, and this subgroup is nontrivial because $\t{ker}f$ is nontrivial and $G$ acts faithfully on $G/e$. The operad $\t{ind}_f \s{O}$ also is not $\Sigma$-free, because there is a unit map $\eta : S \to \t{ind}_f \s{O}$.
\end{proof}

\begin{rem}\label{rem:derindrescoind} Every object of $\b{Op}^G_+$ is fibrant by \cite[Theorem 4.8]{RubComb}. Therefore $\t{coind}_f$ preserves all weak equivalences, which implies that $\bb{R}\t{coind}_f \cong \t{Ho}(\t{coind}_f)$ and $\bb{L}\t{res}_f \dashv \t{Ho}(\t{coind}_f)$. If $f$ is injective, then the functor $\t{res}_f$ also preserves all weak equivalences. In this case, we have isomorphisms $\bb{L}\t{res}_f \cong \t{Ho}(\t{res}_f) \cong \bb{R}\t{res}_f$ and a chain $\bb{L}\t{ind}_f \dashv \t{Ho}(\t{res}_f) \dashv \t{Ho}(\t{coind}_f)$ of derived adjunctions.
\end{rem}

\subsection{The connection to transfer systems} In this section, we relate derived operadic induction, restriction, and coinduction to image and inverse image constructions for transfer systems. Our strategy is to show that $\bb{L}\t{res}_f$ and $f^{-1}_L$ correspond under the equivalence $\t{Ho}(\b{Op}^G_+) \simeq \b{Tr}(G)$, and then to deduce the remaining correspondences from the uniqueness of adjoints.

Given that the left derived functor $\bb{L}\t{res}_f$ can be computed on free resolutions, and that $\t{res}_f$ commutes with $F_+ : \b{Sym}^G \to \b{Op}^G_+$, we are reduced to understanding the behavior of $\t{res}_f$ on symmetric sequences.

\begin{lem}\label{lem:dblcosetgraph}Suppose that $f : G \to G'$ is a homomorphism between finite groups, let $H \subset G'$ be a subgroup, and let $T$ be a $H$-set of finite cardinality $n$. Write $\Gamma(T)$ for the graph of a permutation representation of $T$. Then
	\[
	\t{res}_{f \times \t{id}} \Bigg( \frac{G' \times \Sigma_n}{\Gamma(T)} \Bigg)  \cong  \coprod_{r} \frac{G \times \Sigma_n} {\Gamma(f^* \t{res}^{r H r^{-1}}_{r H r^{-1} \cap \t{im}(f)} c_r T) } ,
	\]
where:
	\begin{enumerate}
		\item{}$r$ ranges over a set of representatives for $\t{im}(f) \times \Sigma_n \backslash G' \times \Sigma_n / \Gamma(T)$, taken in the subgroup $G' \times \{ \t{id} \}$, 
		\item{}$c_r T$ is the conjugate $r H r^{-1}$-action to $T$, and 
		\item{}$f^* \t{res}^{r H r^{-1}}_{r H r^{-1} \cap \t{im}(f)} c_r T$ is the $f^{-1}(r H r^{-1})$-action obtained by pulling back the $r H r^{-1} \cap \t{im}(f)$-action on $\t{res}^{r H r^{-1}}_{r H r^{-1} \cap \t{im}(f)} c_r T$ along $f$.
	\end{enumerate}
\end{lem}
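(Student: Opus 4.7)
The plan is to decompose the $(G \times \Sigma_n)$-set $\t{res}_{f \times \t{id}}((G' \times \Sigma_n)/\Gamma(T))$ into orbits and identify each orbit explicitly. Since $G \times \Sigma_n$ acts on $(G' \times \Sigma_n)/\Gamma(T)$ by left translation through $f \times \t{id}$, the orbit set is canonically identified with the double coset space
\[
(\t{im}(f) \times \Sigma_n) \backslash (G' \times \Sigma_n) / \Gamma(T) .
\]
So the first step is to pick a good system of representatives. Because $\{e\} \times \Sigma_n$ sits inside $\t{im}(f) \times \Sigma_n$, any element of $G' \times \Sigma_n$ is equivalent to one in $G' \times \{\t{id}\}$ from the left, so we may choose representatives of the form $(r,\t{id})$ with $r$ ranging over a transversal for $\t{im}(f) \times \Sigma_n \backslash G' \times \Sigma_n / \Gamma(T)$ taken inside $G' \times \{\t{id}\}$. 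This is exactly the indexing set appearing on the right hand side.

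Next I would compute, for each such $r$, the stabilizer $\Gamma_r \subset G \times \Sigma_n$ of the coset $(r,\t{id})\Gamma(T)$. By definition, $(g,\sigma) \in \Gamma_r$ iff $(f(g) r, \sigma)$ lies in $(r,\t{id})\Gamma(T)$, equivalently iff $(r^{-1} f(g) r, \sigma) \in \Gamma(T)$. Writing $\Gamma(T) = \{(h, \rho(h)) \mid h \in H\}$ for the chosen permutation representation $\rho : H \to \Sigma_n$ of $T$, this condition becomes $r^{-1} f(g) r \in H$ together with $\sigma = \rho(r^{-1} f(g) r)$. The first condition says $g \in f^{-1}(rHr^{-1})$, and since the image of $g$ automatically lies in $\t{im}(f)$, this is the same as $g \in f^{-1}(rHr^{-1} \cap \t{im}(f))$. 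Hence $\Gamma_r$ is the graph of a homomorphism $f^{-1}(rHr^{-1}) \to \Sigma_n$.

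The final step is to recognize the corresponding $f^{-1}(rHr^{-1})$-set as the one named in the statement. The homomorphism $g \mapsto \rho(r^{-1} f(g) r)$ describes the action in which $g$ permutes $T$ exactly as $r^{-1} f(g) r \in H$ does in the original $H$-action. Conjugating by $r$ first produces the conjugate $rHr^{-1}$-action $c_r T$, restricting to $rHr^{-1} \cap \t{im}(f)$ keeps only those conjugates that factor through $f$, and pulling back along $f$ reinterprets the result as an $f^{-1}(rHr^{-1})$-action. Unwinding these three operations in order yields precisely the action $g \cdot t = (r^{-1} f(g) r) \cdot t$, so $\Gamma_r = \Gamma(f^* \t{res}^{rHr^{-1}}_{rHr^{-1} \cap \t{im}(f)} c_r T)$. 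Summing the orbit decompositions $G \times \Sigma_n / \Gamma_r$ over $r$ then gives the claimed formula. The only real subtlety is the bookkeeping that matches the three-step pullback description on the right with the ad hoc formula $g \mapsto \rho(r^{-1} f(g) r)$ produced by the stabilizer computation; once that identification is made, everything else is formal.
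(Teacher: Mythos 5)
Your proposal is correct. Where the paper factors $\t{res}_{f \times \t{id}}$ through the intermediate group $\t{im}(f) \times \Sigma_n$ --- first applying the ordinary double coset formula for $\t{res}^{G'\times\Sigma_n}_{\t{im}(f)\times\Sigma_n}$, and then pulling back along the surjection $f \times \t{id} : G \times \Sigma_n \twoheadrightarrow \t{im}(f) \times \Sigma_n$ --- you instead compute the $(G \times \Sigma_n)$-orbit decomposition and stabilizers directly from the definition of the action $(g,\sigma)\cdot(r,\t{id})\Gamma(T)$. The two steps in the paper correspond to the two halves of your stabilizer computation: the intersection $r\Gamma(T)r^{-1} \cap (\t{im}(f)\times\Sigma_n)$ is where your condition $r^{-1}f(g)r \in H$ lands in $\t{im}(f)$, and the pullback of stabilizers along the surjection is your passage from that condition to $g \in f^{-1}(rHr^{-1})$. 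Your one-shot version is a bit more elementary and keeps the graph-subgroup bookkeeping in a single place; the paper's factored version has the minor advantage of isolating the two standard facts being used (double coset formula for a genuine subgroup inclusion, and pullback of transitive actions along a surjection), which makes it easier to cite. Both arrive at the same identification of $\Gamma_r$ with the graph of $g \mapsto \rho(r^{-1}f(g)r)$, which is exactly $\Gamma(f^*\t{res}^{rHr^{-1}}_{rHr^{-1}\cap\t{im}(f)} c_r T)$.
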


\begin{proof} Compute $\t{res}_{f \times \t{id}}$ by first restricting to the subgroup $\t{im}(f) \times \Sigma_n \subset G' \times \Sigma_n$ and applying the double-coset formula, and then pulling back along the surjective homomorphism $f \times \t{id} : G \times \Sigma_n \to \t{im}(f) \times \Sigma_n$.

The first step yields
	\[
	\coprod_{r}
	\frac{\t{im}(f) \times \Sigma_n} { r \Gamma(T) r^{-1} \cap ( \t{im}(f) \times \Sigma_n ) } ,
	\]
where $r$ ranges over a set of representatives for $\t{im}(f) \times \Sigma_n \backslash G' \times \Sigma_n / \Gamma(T)$. We may assume $r \in G' \times \{\t{id}\}$ because we are taking $\t{im}(f) \times \Sigma_n$-orbits. Moroever, we have $r \Gamma(T) r^{-1} = \Gamma(c_r T)$, and $\Gamma(c_r T) \cap ( \t{im}(f) \times \Sigma_n ) = \Gamma(\t{res}^{rHr^{-1}}_{rHr^{-1} \cap \t{im}(f)} c_r T)$.

The second step yields
	\[
	\coprod_{r}
	(f \times \t{id})^* \Bigg( \frac{\t{im}(f) \times \Sigma_n}  {\Gamma(\t{res}^{rHr^{-1}}_{rHr^{-1} \cap \t{im}(f)} c_r T) } \Bigg) ,
	\]
and each summand is a transitive $(G \times \Sigma_n)$-set because $f \times \t{id} : G \times \Sigma_n \to \t{im}(f) \times \Sigma_n$ is surjective. Since stabilizers pull back, it follows
	\[
	(f \times \t{id})^* \Bigg( \frac{\t{im}(f) \times \Sigma_n}  {\Gamma(\t{res}^{rHr^{-1}}_{rHr^{-1} \cap \t{im}(f)} c_r T) } \Bigg) 
	\cong  \frac{G \times \Sigma_n}{(f \times \t{id})^{-1} \Gamma(\t{res}^{rHr^{-1}}_{rHr^{-1} \cap \t{im}(f)} c_r T) } ,
	\]
and $(f \times \t{id})^{-1} \Gamma(\t{res}^{rHr^{-1}}_{rHr^{-1} \cap \t{im}(f)} c_r T) = \Gamma(f^*\t{res}^{rHr^{-1}}_{rHr^{-1} \cap \t{im}(f)} c_r T)$.
\end{proof}

From here, we can calculate the transfer system associated to ${\bb{L}\t{res}_f \s{O}}$.

\begin{thm}\label{thm:iminvimcompatindrescoind}Suppose that $f : G \to G'$ is an arbitrary homomorphism between finite groups. Then the squares
	\[
	\begin{tikzpicture}[scale=2]
		\node(00) at (0,0) {$\b{Tr}(G)$};
		\node(01) at (0,0.7) {$\t{Ho}(\b{Op}^{G}_+)$};
		\node(10) at (1.75,0) {$\b{Tr}(G')$};
		\node(11) at (1.75,0.7) {$\t{Ho}(\b{Op}^{G'}_+)$};
		
		\path[->]
		(10) edge [below] node {$f^{-1}_L$} (00)
		(11) edge [above] node {$\bb{L}\t{res}_f$} (01)
		(01) edge [left] node {$\to_\bullet$} (00)
		(11) edge [right] node {$\to_\bullet$} (10)
		;
		
		\node(00') at (3.25,0) {$\b{Tr}(G)$};
		\node(01') at (3.25,0.7) {$\t{Ho}(\b{Op}^{G}_+)$};
		\node(10') at (5,0) {$\b{Tr}(G')$};
		\node(11') at (5,0.7) {$\t{Ho}(\b{Op}^{G'}_+)$};
		
		\path[->]
		(00') edge [below] node {$f_R$} (10')
		(01') edge [above] node {$\t{Ho}(\t{coind}_f)$} (11')
		(01') edge [left] node {$\to_\bullet$} (00')
		(11') edge [right] node {$\to_\bullet$} (10')
		;
	\end{tikzpicture}
	\]
commute. Suppose additionally that the map $f$ is injective. Then the squares
	\[
	\begin{tikzpicture}[scale=2]
		\node(00) at (0,0) {$\b{Tr}(G)$};
		\node(01) at (0,0.7) {$\t{Ho}(\b{Op}^{G}_+)$};
		\node(10) at (1.75,0) {$\b{Tr}(G')$};
		\node(11) at (1.75,0.7) {$\t{Ho}(\b{Op}^{G'}_+)$};
		
		\path[->]
		(00) edge [below] node {$f_L$} (10)
		(01) edge [above] node {$\bb{L}\t{ind}_f$} (11)
		(01) edge [left] node {$\to_\bullet$} (00)
		(11) edge [right] node {$\to_\bullet$} (10)
		;
		
		\node(00') at (3.25,0) {$\b{Tr}(G)$};
		\node(01') at (3.25,0.7) {$\t{Ho}(\b{Op}^{G}_+)$};
		\node(10') at (5,0) {$\b{Tr}(G')$};
		\node(11') at (5,0.7) {$\t{Ho}(\b{Op}^{G'}_+)$};
		
		\path[->]
		(10') edge [below] node {$f^{-1}_R = f^{-1}_L$} (00')
		(11') edge [above] node {$\t{Ho}(\t{res}_f) \cong \bb{L}\t{res}_f$} (01')
		(01') edge [left] node {$\to_\bullet$} (00')
		(11') edge [right] node {$\to_\bullet$} (10')
		;
	\end{tikzpicture}
	\]
also commute.	
\end{thm}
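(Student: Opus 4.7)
The plan is to follow the strategy flagged just before the theorem: prove the first square commutes --- i.e., $\to_{\bb{L}\t{res}_f \s{O}'} \,\, = \,\, f^{-1}_L(\to_{\s{O}'})$ for every $\s{O}' \in \b{Op}^{G'}_+$ --- and then deduce the other three statements by the uniqueness of adjoints together with the coincidence $\bb{L}\t{res}_f \cong \t{Ho}(\t{res}_f)$ available in the injective case.

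For the main step, I would first reduce to the case of a cofibrant $\s{O}'$, since retracts preserve admissibility on the operad side and refinements on the transfer system side. The generating cofibrations of $\b{Op}^{G'}_+$ are of the form $F_+(\varnothing \to \frac{G' \times \Sigma_n}{\Gamma'})$, so every cofibrant operad is a retract of a free operad $F_+(S')$ on a $\Sigma$-free symmetric sequence $S'$. By Lemma \ref{lem:indrescoindcomm}, $\t{res}_f F_+(S') \cong F_+(\t{res}_f S')$, and by the free-operad admissibility formula (as in Example \ref{ex:admcoprfree}, with the marked generators adjoined) the transfer system $\to_{\bb{L}\t{res}_f \s{O}'}$ is the one generated by the graph subgroups appearing as stabilizers of orbits in $\t{res}_f S'$.

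Now I would apply Lemma \ref{lem:dblcosetgraph} orbitwise. For each admissible orbit $\frac{G' \times \Sigma_n}{\Gamma(H'/K')}$ of $S'$ --- which witnesses $K' \to_{\s{O}'} H'$ --- the restriction decomposes as a coproduct of orbits $\frac{G \times \Sigma_n}{\Gamma(T_r)}$ indexed by double cosets, where each $T_r$ is an $f^{-1}(rH'r^{-1})$-set obtained by further double-coset decomposition and pullback along $f$. A direct stabilizer calculation shows that the orbit-transfers contributed by the $T_r$ all have the form $f^{-1}(rH'r^{-1}) \cap M \to f^{-1}(rH'r^{-1})$ for subgroups $M$ arising from conjugates of $f^{-1}K'$. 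Since $\to_{\s{O}'}$ is closed under $G'$-conjugation and restriction, each such pair can be rewritten in the form $(f^{-1}\til{K} \cap L, L)$ with $\til{K} \to_{\s{O}'} \til{H}$ and $L \subset f^{-1}\til{H}$, which by Lemma \ref{lem:Liminv}(2) is a generator of $f^{-1}_L(\to_{\s{O}'})$; this gives one inclusion. Conversely, every such generator $(f^{-1}K' \cap L, L)$ is the stabilizer of a point in some orbit of $\t{res}_f S'$ appearing in Lemma \ref{lem:dblcosetgraph}, yielding the reverse inclusion. Taking closure under conjugation, restriction, reflexivity, and transitivity on both sides produces the desired equality, and transporting through a retract returns the general case.

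For the remaining squares, every object of $\b{Op}^G_+$ is fibrant (Theorem \ref{thm:modcat}), so $\t{coind}_f$ preserves all weak equivalences and induces a derived adjunction $\bb{L}\t{res}_f \dashv \t{Ho}(\t{coind}_f)$. Combining this with $f^{-1}_L \dashv f_R$ from Proposition \ref{prop:iminvimfunts} and using that $\b{Tr}(G)$ is a poset, uniqueness of adjoints upgrades the first commutativity to the second. When $f$ is injective, $\t{res}_f$ also preserves all weak equivalences (Remark \ref{rem:derindrescoind}), so $\t{Ho}(\t{res}_f) \cong \bb{L}\t{res}_f$ and $f^{-1}_L = f^{-1}_R$ by Proposition \ref{prop:injhomots}, giving the fourth square; the third then follows from $\bb{L}\t{ind}_f \dashv \t{Ho}(\t{res}_f)$ and $f_L \dashv f^{-1}_R$ by the same adjoint-uniqueness argument.

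The main obstacle is the bookkeeping in the middle paragraph: matching the generators of $f^{-1}_L(\to_{\s{O}'})$ from Lemma \ref{lem:Liminv} with the orbit-stabilizers produced by the double-coset decomposition of Lemma \ref{lem:dblcosetgraph}, which requires keeping the $G'$-conjugations (indexed by the representatives $r$) straight against the $G$-closure axioms built into $f^{-1}_L$.
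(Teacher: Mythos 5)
Your proposal follows essentially the same approach as the paper's proof: the core computation of $\to_{\bb{L}\t{res}_f \s{O}'}$ proceeds via a free cofibrant replacement, commuting $\t{res}_f$ past $F_+$ using Lemma~\ref{lem:indrescoindcomm}, applying the double-coset decomposition of Lemma~\ref{lem:dblcosetgraph} orbitwise, and simplifying to $f^{-1}_L(\to_{\s{O}'})$ using the closure axioms; the remaining three squares are then deduced by uniqueness of adjoints together with $\bb{L}\t{res}_f \cong \t{Ho}(\t{res}_f)$ and $f^{-1}_L = f^{-1}_R$ in the injective case. The only difference is cosmetic: the paper works with a specific free cofibrant replacement $Q\s{O} = F_+\big(\coprod_{H/K \in A(\s{O})} \frac{G' \times \Sigma_{|H:K|}}{\Gamma(H/K)}\big)$ and carries out the indexing-system simplification in three explicit stages (killing the conjugation index $r$, dropping $\t{res}^H_{H \cap \t{im}(f)}$, and converting $f^*$ to $f^{-1}$), rather than arguing via retracts and a two-sided inclusion, and it translates to transfer systems at the end via Proposition B.9 of the companion paper rather than Lemma~\ref{lem:Liminv}(2), but these amount to the same bookkeeping.
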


\begin{proof} We begin by checking the equation $\to_\bullet \circ \,\, \bb{L}\t{res}_f = f^{-1}_L \,\, \circ \to_\bullet$ on operads. Suppose that $\s{O} \in \b{Op}^{G'}_+$, and let
	\[
	Q\s{O} = F_+ \Bigg( \coprod_{ H/K \in A(\s{O}) }
	 \frac{G' \times \Sigma_{|H:K|}}{ \Gamma(H/K)} \Bigg) ,
	\]
where $H/K$ ranges over all orbits in $A(\s{O})$. Then $Q\s{O}$ is a cofibrant replacement for $\s{O}$, because choosing $\Gamma(H/K)$-fixed operations in $\s{O}$ determines a map $Q\s{O} \to \s{O}$, and this map is a weak equivalence because $A(Q\s{O}) = \la H/K \,| \, H/K \in A(\s{O}) \ra = A(\s{O})$ by \cite[Theorem 5.6]{RubComb}. By Lemmas \ref{lem:indrescoindcomm} and \ref{lem:dblcosetgraph}, we conclude that 
	\[
	\t{res}_f(Q\s{O}) \cong F_+ \Bigg( \coprod_{H/K \in A(\s{O})} \coprod_{r} \frac{G \times \Sigma_{\abs{H:K}}}{\Gamma(f^*\t{res}^{rHr^{-1}}_{rHr^{-1} \cap \t{im}(f)} c_r H/K)} \Bigg) ,
	\]
where $r$ ranges over the representatives specified in Lemma \ref{lem:dblcosetgraph} for each orbit $H/K$.

By \cite{RubComb} once more, the class of admissible sets of $\t{res}_f(Q\s{O})$ is the indexing system
	\[
	\Bigg\langle f^* \t{res}^{rHr^{-1}}_{rHr^{-1} \cap \t{im}(f)} c_r H/K  \,\, \Bigg|  \,\, 
	\begin{array}{c}
		H/K \in A(\s{O}) \t{ and }	\\
		r \in \t{im}(f) \times \Sigma_{|H:K|} \backslash G' \times \Sigma_{|H:K|} / \Gamma(H/K)
	\end{array} \Bigg\rangle .
	\]
Since indexing systems are closed under conjugation, this simplifies to
	\[
	\Big\langle f^*\t{res}^H_{H \cap \t{im}(f)} H/K \, \Big| \, H/K \in A(\s{O}) \Big\rangle ,
	\]
and since indexing systems are closed restriction and subobjects, and $f^*$ commutes with coproducts, this simplifies further to
	\[
	\Big\langle f^*H/K \, \Big| \, H/K \in A(\s{O}) \t{ and } H \subset \t{im}(f) \Big\rangle = \Big\langle f^{-1}H/f^{-1}K \, \Big| \, H/K \in A(\s{O}) \Big\rangle .
	\]
This computes $A(\t{res}_f(Q\s{O}))$. Applying the isomorphism $\to_\bullet \, : \b{Ind}(G) \to \b{Tr}(G)$ and \cite[Proposition B.9]{RubChar} shows that the transfer system associated to $\t{res}_f(Q\s{O})$ is $\la (f^{-1}K , f^{-1}H) \, | \, K \to_\s{O} H \rangle$, which equals $f^{-1}_L(\to_{\s{O}})$ by definition. This proves that $\to_{\bb{L}\t{res}_f \s{O}} \,\, = f^{-1}_L(\to_{\s{O}})$ for every operad $\s{O} \in \b{Op}^{G'}_+$, and the equality $\to_\bullet \circ \,\,  \bb{L}\t{res}_f = f^{-1}_L \,\, \circ \to_\bullet$ of functors follows because parallel morphisms in $\b{Tr}(G)$ are equal.

Now let $\to_\bullet^{-1}$ be a pseudoinverse to $\to_\bullet$. The equation $\to_\bullet \circ \,\,  \bb{L}\t{res}_f = f^{-1}_L \,\, \circ \to_\bullet$ implies an isomorphism $\t{Ho}(\t{coind}_f) \,\, \circ \to_\bullet^{-1} \,\, \cong \,\, \to_\bullet^{-1} \circ \,\, f_R$ of right adjoints, and hence $\to_\bullet \circ \,\, \t{Ho}(\t{coind}_f) \cong f_R \,\, \circ \to_\bullet$ as well. Since $\b{Tr}(G')$ is a poset, this is an equality. 

Suppose further that the morphism $f : G \to G'$ is injective. Then $f^{-1}_L = f^{-1}_R$ by Proposition \ref{prop:injhomots}, and $\bb{L}\t{res}_f \cong \t{Ho}(\t{res}_f) \cong \bb{R}\t{res}_f$ by Remark \ref{rem:derindrescoind}. Our calculation of $\to_{\bb{L}\t{res}_f\s{O}}$ now reads $\to_\bullet \circ \,\, \t{Ho}(\t{res}_f) = f^{-1}_R \,\, \circ \to_\bullet$, and the equality $\to_\bullet \circ \,\, \bb{L}\t{ind}_f = f_L \,\, \circ \to_\bullet$ for left adjoints follows as above.
\end{proof}

The functor $\t{res}_f$ is already homotopical when $f$ is injective (cf. Remark \ref{rem:derindrescoind}), but even when $f$ is not, the next result shows that $\t{res}_f$ still preserves weak equivalences in the most interesting cases. Thus $\t{res}_f$ barely needs to be derived.

\begin{cor}\label{cor:invimresNinfty}Suppose $f : G \to G'$ is an arbitrary homomorphism between finite groups. Then the functor $\t{res}_f : \b{Op}^{G'}_+ \to \b{Op}^G_+$ preserves weak equivalences between $N$ operads. Moreover, if $\s{O} \in \b{Op}^{G'}_+$ is a $N$ operad, then so is $\t{res}_f \s{O}$, and the equality $\to_{\t{res}_f \s{O}}\,\, = f^{-1}_L(\to_{\s{O}})$ holds.
\end{cor}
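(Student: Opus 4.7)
The plan is to bypass the model-categorical subtleties in the noninjective case by computing $\to_{\t{res}_f \s{O}}$ directly, and then invoking Theorem \ref{thm:iminvimcompatindrescoind} to identify it with $f^{-1}_L(\to_\s{O})$. Concretely, $(\t{res}_f \s{O})(n)$ is $\s{O}(n)$ with unchanged $\Sigma_n$-action and with $g \in G$ acting as $f(g)$, so $\t{res}_f \s{O}$ is a $N$ operad whenever $\s{O}$ is: $\Sigma_n$-freeness passes through, and every $x \in \s{O}(n)^{G'}$ satisfies $g \cdot x = f(g) \cdot x = x$ for all $g \in G$.

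The key computation is the formula for $\to_{\t{res}_f \s{O}}$. Fix $K \subset H \subset G$, set $n = \abs{H:K}$, and let $\sigma : H \to \Sigma_n$ be the permutation representation of $H/K$. Unwinding definitions gives
\[
(\t{res}_f \s{O})(n)^{\Gamma(H/K)} = \s{O}(n)^{\Gamma'}, \qquad \Gamma' := \{(f(h), \sigma(h)) : h \in H\} \subset G' \times \Sigma_n.
\]
Normality of $\t{ker}(f)$ in $G$ forces $\t{ker}(f) \cap H$ to be normal in $H$, so it lies in the core $\t{ker}(\sigma) = \bigcap_{g \in H} gKg^{-1}$ iff it lies in $K$. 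If $\t{ker}(f) \cap H \not\subset K$, I would pick $h_0 \in \t{ker}(f) \cap H$ with $\sigma(h_0) \neq 1$, note that $(1, \sigma(h_0)) \in \Gamma'$, and conclude $\s{O}(n)^{\Gamma'} = \varnothing$ from $\Sigma_n$-freeness. If instead $\t{ker}(f) \cap H \subset K$, then $\sigma$ factors as $\bar\sigma \circ (f|_H)$ for the permutation representation $\bar\sigma : f(H) \to \Sigma_n$ of $f(H)/f(K)$ (using that $\abs{f(H):f(K)} = \abs{H:K} = n$), so $\Gamma' = \Gamma(f(H)/f(K))$ is a genuine graph subgroup and $\s{O}(n)^{\Gamma'} \neq \varnothing$ iff $f(K) \to_\s{O} f(H)$. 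In summary, $K \to_{\t{res}_f \s{O}} H$ iff $\t{ker}(f) \cap H \subset K$ and $f(K) \to_\s{O} f(H)$.

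To identify this relation with $f^{-1}_L(\to_\s{O})$, I would apply the formula to the canonical cofibrant replacement $Q\s{O}$ built in the proof of Theorem \ref{thm:iminvimcompatindrescoind}. Since $A(Q\s{O}) = A(\s{O})$, and hence $\to_{Q\s{O}} \,=\, \to_\s{O}$, the formula gives $\to_{\t{res}_f Q\s{O}} \,=\, \to_{\t{res}_f \s{O}}$. But $\t{res}_f Q\s{O}$ represents $\bb{L}\t{res}_f \s{O}$, so Theorem \ref{thm:iminvimcompatindrescoind} yields $\to_{\t{res}_f \s{O}} \,=\, \to_{\bb{L}\t{res}_f \s{O}} \,=\, f^{-1}_L(\to_\s{O})$. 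Preservation of weak equivalences between $N$ operads then follows: such equivalences are detected by equality of transfer systems, and the direct formula shows that $\to_{\t{res}_f(-)}$ depends only on $\to_{(-)}$.

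I expect the main obstacle to be the $\Sigma_n$-freeness dichotomy in the transfer-system computation. Recognizing that the pathological case (where $\Gamma'$ fails to be a graph subgroup) is exactly the case in which $\Sigma_n$-freeness kills all fixed points is what automatically produces the kernel condition $\t{ker}(f) \cap H \subset K$, and this is where the subtlety of noninjective $f$ actually gets resolved. Everything else is straightforward bookkeeping.
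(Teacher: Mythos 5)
Your proposal is correct, and it follows the same overall strategy as the paper: show $\t{res}_f\s{O}$ is a $N$ operad, show $\t{res}_f$ preserves weak equivalences between $N$ operads, and then invoke Theorem \ref{thm:iminvimcompatindrescoind} on a cofibrant replacement. The difference is in the middle step. The paper establishes preservation of weak equivalences with a soft argument: a weak equivalence $\vp$ of $N$ operads preserves nonemptiness of $\Xi$-fixed points for \emph{every} subgroup $\Xi \subset G' \times \Sigma_n$ (graph subgroups by definition, non-graph ones because $\Sigma$-freeness makes both sides empty), and since $(\t{res}_f\s{O})(n)^\Xi = \s{O}(n)^{(f\times\t{id})\Xi}$, this property transports across $\t{res}_f$. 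You instead compute $\to_{\t{res}_f\s{O}}$ explicitly, showing $K \to_{\t{res}_f\s{O}} H$ iff $\ker(f)\cap H \subset K$ and $f(K)\to_{\s{O}}f(H)$, by classifying when $(f\times\t{id})\Gamma(H/K)$ is a graph subgroup; dependence on $\to_{\s{O}}$ alone then yields weak-equivalence preservation. Your version is more computational but yields a concrete formula as a byproduct -- in fact you could have gone further and matched that formula directly against the description of $f^{-1}_L$ in Lemma \ref{lem:Liminv}, bypassing the cofibrant-replacement step entirely; the paper's version is shorter but gives no explicit description. Both arguments are sound, and the rest of your proposal tracks the paper's proof exactly.
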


\begin{proof}Suppose $\s{O},\s{P} \in \b{Op}^{G'}_+$ are $N$ operads and that $\vp : \s{O} \to \s{P}$ is a weak equivalence. Then for any $n \geq 0$ and any subgroup $\Xi \subset G' \times \Sigma_n$ whatsoever, the set $\s{O}(n)^{\Xi}$ is nonempty if and only if the set $\s{P}(n)^{\Xi}$ is nonempty. When $\Xi$ is a graph subgroup, this follows from the definition of a weak equivalence. When $\Xi$ is not, both sides are empty. The restricted operads $\t{res}_f \s{O}$ and $\t{res}_f \s{P}$ have the same property because $(\t{res}_f\s{O})(n)^{\Xi} = \s{O}(n)^{(f \times \t{id})\Xi}$, and therefore the map $\t{res}_f \vp : \t{res}_f \s{O} \to \t{res}_f \s{P}$ is also a weak equivalence.

Now suppose $\s{O} \in \b{Op}^{G'}_+$ is a $N$ operad. Then $\t{res}_f \s{O}$ is $\Sigma$-free because it has the same $\Sigma$-action, and for any $n \geq 0$, we have $(\t{res}_f \s{O})(n)^G = \s{O}(n)^{f(G)} \supset \s{O}(n)^{G'} \neq \varnothing$. Therefore $\t{res}_f \s{O}$ is also a $N$ operad. Choose a cofibrant replacement $q : Q\s{O} \to \s{O}$. Then $\t{res}_f(q) : \bb{L}\t{res}_f \s{O} \simeq \t{res}_f(Q\s{O}) \to \t{res}_f \s{O}$ is a weak equivalence by the preceding paragraph, and hence $\to_{\t{res}_f \s{O}} \,\, = \,\, \to_{\bb{L}\t{res}_f \s{O}} \,\, = f^{-1}_L(\to_{\s{O}})$.
\end{proof}

We also have the following consistency check for Theorem \ref{thm:iminvimcompatindrescoind}.

\begin{ex} Suppose that $f : G \to G'$ is a homomorphism between finite groups, that $\s{O} \in \b{Op}^G_+$ is a marked $G$-operad, and that $\to_{\s{O}}$ is the terminal $G$-transfer system. Then $\to_{\t{coind}_f \s{O}} \,\, = f_R(\to_{\s{O}})$ is also terminal, because $f_R$ is a right adjoint. When $f$ is the unique map $! : 1 \to G$, and $\s{O} = \b{As}$, we conclude that the transfer system for $\t{coind}_{1}^G(\b{As}) \cong \b{Set}(G,\b{As})$ is terminal, just as in Example \ref{ex:coindlat}.

More generally, Theorem \ref{thm:iminvimcompatindrescoind} says that the transfer system $\to$ associated to $\b{Set}(H\backslash G,\b{As}) \cong \t{coind}_H^G(\b{As})$ equals $i_R(=)$, where $i : H \hookrightarrow G$ is the inclusion and $=$ is the trivial $H$-transfer system. By definition, $J \to K$ if and only if $gJg^{-1} \cap L \cap H = L \cap H$ for every $g \in G$ and $L \subset gKg^{-1}$, which is equivalent to requiring $K \cap \bigcup_{g \in G} g^{-1} H g \subset J$. Since $K$ and $\bigcup_{g \in G} g^{-1} H g$ are stable under conjugation by elements of $K$, this is equivalent to the inclusion
	\[
	\bigcup_{g \in G} \t{Stab}_K(Hg \in H \backslash G) = \bigcup_{g \in G} (K \cap g^{-1} H g) \subset \bigcap_{k \in K} kJk^{-1} = \bigcap_{k \in K} \t{Stab}_K(kJ \in K/J),
	\]
which says that every element of $K$ that fixes an element of $H \backslash G$ acts as the identity on $K/J$. This recovers the description of the $A(\b{Set}(H \backslash G,\b{As}))$ in Example \ref{ex:coindlat}.
\end{ex}

\subsection{The unmarked case}\label{subsec:Nopindrescoind} If $G$ is a finite group and $H \subset G$ is a subgroup, then we also have adjoint functors $\t{ind}_H^G \dashv \t{res}^G_H \dashv \t{coind}_H^G$ between the categories $\b{Op}^H$ and $\b{Op}^G$ of unmarked operads. Unfortunately, the previous discussion does not entirely carry over, because the functor $\t{ind}_H^G : \b{Op}^H \to \b{Op}^G$ does not preserve $N$ operads. For example, if $\s{O} = F( \frac{H \times \Sigma_0}{H} \sqcup \frac{H \times \Sigma_2}{H})$, then $\t{ind}_H^G \s{O} \cong F ( \frac{G \times \Sigma_0}{H} \sqcup \frac{G \times \Sigma_2}{H})$, and this operad does not have any $G$-fixed operations. Replacing the category $\b{Op}^G$ with $\b{Op}^G_+$ fixes this problem because we change the coproduct.

That being said, there are no issues with using unmarked operads if one is only concerned with restriction and coinduction.

\begin{lem}Suppose $f : G \to G'$ is a homomorphism between finite groups, and consider the adjunction $\t{res}_f : \b{Op}^{G'} \rightleftarrows \b{Op}^G : \t{coind}_f$. Both adjoints preserve $N$ operads and weak equivalences between $N$ operads, and therefore there is an induced adjunction $\t{Ho}(\t{res}_f) : \t{Ho}(N\t{-}\b{Op}^{G'}) \rightleftarrows \t{Ho}(N\t{-}\b{Op}^G) : \t{Ho}(\t{coind}_f)$.
\end{lem}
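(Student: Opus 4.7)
For $\t{res}_f$, I would simply transcribe the proof of Corollary \ref{cor:invimresNinfty} to the unmarked setting, since markings play no role there. Explicitly, for $\s{O} \in N\t{-}\b{Op}^{G'}$, the pullback $\t{res}_f \s{O}$ has the same underlying $\Sigma$-action and so is $\Sigma$-free, and $(\t{res}_f \s{O})(n)^G = \s{O}(n)^{f(G)} \supset \s{O}(n)^{G'} \neq \varnothing$ shows it is again a $N$ operad. For preservation of weak equivalences, the identity $(\t{res}_f \s{O})(n)^\Xi = \s{O}(n)^{(f \times \t{id})(\Xi)}$ reduces nonemptiness of fixed sets of $\t{res}_f \s{O}$ to nonemptiness of fixed sets of $\s{O}$, so the admissible sets of $\t{res}_f \s{O}$ are determined by those of $\s{O}$.

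For $\t{coind}_f$, I would first establish preservation of $N$ operads using the end formula $\t{coind}_f \s{O}(n) \cong \b{Set}^G(G', \s{O}(n))$, where $G'$ carries the left $G$-action through $f$ together with its commuting right $G'$-translation. The constant map at any $G$-fixed element of $\s{O}(n)$ is $G'$-fixed, giving $\t{coind}_f \s{O}(n)^{G'} \supset \s{O}(n)^G \neq \varnothing$. For $\Sigma_n$-freeness: if $\phi \sigma = \phi$ then $\phi(g') \sigma = \phi(g')$ at any $g' \in G'$, and the $\Sigma_n$-freeness of $\s{O}(n)$ forces $\sigma = 1$.

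The main work is preservation of weak equivalences by $\t{coind}_f$, which I would handle by reducing to the marked setting. Given a weak equivalence $\vp : \s{O} \to \s{P}$ in $N\t{-}\b{Op}^G$, choose elements $u \in \s{O}(0)^G$ and $p \in \s{O}(2)^G$, and mark $\s{O}$ by $(u, p)$ and $\s{P}$ by $(\vp(u), \vp(p))$. Then $\vp$ lifts to a weak equivalence in $\b{Op}^G_+$, and by Proposition \ref{prop:quilladjindrescoind}(1) together with Remark \ref{rem:derindrescoind}, the marked $\t{coind}_f : \b{Op}^G_+ \to \b{Op}^{G'}_+$ preserves all weak equivalences (every object of $\b{Op}^G_+$ is fibrant). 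A direct comparison of universal properties shows that the underlying $G'$-operad of the marked coinduction coincides with the unmarked coinduction --- both are given levelwise by the same end --- so $\t{coind}_f \vp$ is also a weak equivalence in $\b{Op}^{G'}$. The adjunction on homotopy categories then descends from the 1-categorical adjunction $\t{res}_f \dashv \t{coind}_f$, whose unit and counit have components in the full subcategory of $N$ operads. I expect the marked-versus-unmarked comparison for $\t{coind}_f$ to be the only nontrivial point, but it reduces to the identification $\t{res}_f \b{F}' \cong \b{F}$ and the uniqueness of right adjoints.
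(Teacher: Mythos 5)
Your proof is correct, and for the preservation of weak equivalences by $\t{coind}_f$ it takes a genuinely different route from the paper. The paper argues directly at the level of symmetric sequences: given a $\Gamma$-fixed point of $\t{coind}_f\s{P}(n)$, adjoint over to a $G\times\Sigma_n$-map $\t{res}_f(\frac{G'\times\Sigma_n}{\Gamma}) \to \s{P}(n)$, split the restricted orbit into graph subgroup orbits $\coprod_k \frac{G\times\Sigma_n}{\Gamma_k}$ exactly as in the proof of Proposition~\ref{prop:quilladjindrescoind}, lift each component to $\s{O}(n)$ using the weak equivalence $\vp$, and adjoint back. You instead mark both operads (using $u, p$ on $\s{O}$ and their $\vp$-images on $\s{P}$), invoke the Quillen adjunction $\t{res}_f \dashv \t{coind}_f$ on $\b{Op}^G_+$ together with the fact that all objects are fibrant, and then compare the marked and unmarked coinductions via the underlying-operad functor (or equivalently via $\t{res}_f\b{F}_{G'}\cong\b{F}_G$ and uniqueness of right adjoints). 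Both are sound; notice that the two arguments are close cousins, since the very ingredient you import via the Quillen adjunction --- that $\t{res}_f$ of a graph-subgroup orbit splits into graph-subgroup orbits --- is precisely what the paper uses elementarily. Your approach buys conceptual clarity and avoids re-deriving the splitting, at the cost of the (mild, but worth stating cleanly) marked-versus-unmarked comparison, whereas the paper stays self-contained in the unmarked world at the cost of one explicit adjunction chase. The remaining pieces of your argument (preservation of $N$ operads by $\t{coind}_f$ via constant maps and evaluation, transcription of Corollary~\ref{cor:invimresNinfty} for $\t{res}_f$, and descent of the adjunction to homotopy categories) match the paper's.
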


\begin{proof}The proof of Corollary \ref{cor:invimresNinfty} shows that $\t{res}_f$ preserves $N$ operads and weak equivalences between them. We must show that $\t{coind}_f$ has the same two properties.

Suppose $\s{O}$ is a $N$ $G$-operad. Then $\t{coind}_f \s{O}(n)$ is the set of $G$-equivariant maps $\alpha : G' \to \s{O}(n)$, where $G$ acts on $G'$ on the left through $f : G \to G'$. The $G' \times \Sigma_n$-action is given by $(\alpha \cdot \sigma)(x) = \alpha(x) \cdot \sigma$ and $(g' \cdot \alpha)(x) = \alpha(x \cdot g')$, for any $\sigma \in \Sigma_n$ and $g' \in G'$. Thus, the $\Sigma$-freeness of $\s{O}$ implies the $\Sigma$-freeness of $\t{coind}_f \s{O}$ by evaluating at some $x \in G'$, and the constant function $c_y : G' \to \s{O}(n)$ valued at any $y \in \s{O}(n)^G$ is a $G'$-fixed element of $\t{coind}_f \s{O}(n)$. Therefore $\t{coind}_f \s{O}$ is a $N$ $G'$-operad.

Now suppose $\vp : \s{O} \to \s{P}$ is a weak equivalence between $N$ $G$-operads and consider $\t{coind}_f \vp : \t{coind}_f \s{O} \to \t{coind}_f \s{P}$. We must show that for any $n \geq 0$ and graph subgroup $\Gamma \subset G' \times \Sigma_n$, if $\t{coind}_f \s{P}(n)^\Gamma \neq \varnothing$, then $\t{coind}_f \s{O}(n)^\Gamma \neq \varnothing$. An element of $\t{coind}_f \s{P}(n)^\Gamma$ is represented by a $G' \times \Sigma_n$-map $\frac{G' \times \Sigma_n}{\Gamma} \to \t{coind}_f \s{P}(n)$, which is adjoint to a $G \times \Sigma_n$-map $\t{res}_f(\frac{G' \times \Sigma_n}{\Gamma}) \to \s{P}(n)$. As in the proof of Proposition \ref{prop:quilladjindrescoind}, the $G \times \Sigma_n$-set $\t{res}_f(\frac{G' \times \Sigma_n}{\Gamma})$ splits as a coproduct $\coprod_{k=1}^m \frac{G \times \Sigma_n}{\Gamma_k}$ for some graph subgroups $\Gamma_1,\dots,\Gamma_k \subset G \times \Sigma_n$. Since $\vp : \s{O} \to \s{P}$ is a weak equivalence, each component map $\frac{G \times \Sigma_n}{\Gamma_k} \to \s{P}(n)$ lifts to $\s{O}(n)$. Summing up these lifts and applying the adjunction gives a $\Gamma$-fixed point of $\t{coind}_f \s{O}(n)$.

It follows that there is an adjunction $\t{res}_f : N\t{-}\b{Op}^{G'} \rightleftarrows N\t{-}\b{Op}^G : \t{coind}_f$, and since both adjoints are homotopical, the adjunction descends to homotopy categories (e.g. through a trivial application of \cite[\S44.2]{DHKS}).
\end{proof}

We obtain an unmarked analogue to Theorem \ref{thm:iminvimcompatindrescoind}.

\begin{prop}\label{prop:unmarkrescoind} Suppose that $f : G \to G'$ is an arbitrary homomorphism between finite groups. Then the squares below commute.
	\[
	\begin{tikzpicture}[scale=2]
		\node(00) at (0,0) {$\b{Tr}(G)$};
		\node(01) at (0,0.7) {$\t{Ho}(N\t{-}\b{Op}^G)$};
		\node(10) at (1.75,0) {$\b{Tr}(G')$};
		\node(11) at (1.75,0.7) {$\t{Ho}(N\t{-}\b{Op}^{G'})$};
		
		\path[->]
		(10) edge [below] node {$f^{-1}_L$} (00)
		(11) edge [above] node {$\t{Ho}(\t{res}_f)$} (01)
		(01) edge [left] node {$\to_\bullet$} (00)
		(11) edge [right] node {$\to_\bullet$} (10)
		;
		
		\node(00') at (3.25,0) {$\b{Tr}(G)$};
		\node(01') at (3.25,0.7) {$\t{Ho}(N\t{-}\b{Op}^G)$};
		\node(10') at (5,0) {$\b{Tr}(G')$};
		\node(11') at (5,0.7) {$\t{Ho}(N\t{-}\b{Op}^{G'})$};
		
		\path[->]
		(00') edge [below] node {$f_R$} (10')
		(01') edge [above] node {$\t{Ho}(\t{coind}_f)$} (11')
		(01') edge [left] node {$\to_\bullet$} (00')
		(11') edge [right] node {$\to_\bullet$} (10')
		;
	\end{tikzpicture}
	\]
\end{prop}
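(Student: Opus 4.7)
The plan is to bootstrap from the marked case (Theorem \ref{thm:iminvimcompatindrescoind}) via two observations: the transfer system $\to_{\s{O}}$ depends only on the underlying unmarked operad, and the forgetful functor from marked to unmarked operads intertwines restriction and coinduction with their marked analogues.

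First, I would pass from unmarked to marked operads. Given $\s{O} \in N\t{-}\b{Op}^G$ or $N\t{-}\b{Op}^{G'}$, select a unit $u \in \s{O}(0)^G$ and a product $p \in \s{O}(2)^G$ (which exist because $\s{O}$ is a $N$ operad), and let $\s{O}_+$ denote $\s{O}$ viewed as a marked operad with these markings. Since the markings do not alter the underlying operad, $\to_{\s{O}_+} \,\, = \,\, \to_{\s{O}}$, and $\s{O}_+$ is a marked $N$ operad in the sense used in Corollary \ref{cor:invimresNinfty}.

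Next, I would verify that unmarked and marked restriction and coinduction agree on underlying operads. For restriction this is immediate, since pulling back along $f$ only modifies the group action and leaves operations untouched. For coinduction, the forgetful functor from $\b{Op}^G_+$ to $\b{Op}^G$ is right adjoint to the free-marking functor $\s{O} \mapsto \s{O} *_{\b{Op}^G} \b{F}$, hence preserves limits; since both marked and unmarked coinduction are pointwise right Kan extensions along $Bf$, the required commutation follows from the general principle that a continuous functor commutes with right Kan extension.

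With these compatibilities in hand, both squares reduce to the marked theorem. For the restriction square, Corollary \ref{cor:invimresNinfty} applied to $\s{O}_+$ gives
	\[
	\to_{\t{res}_f \s{O}} \,\, = \,\, \to_{\t{res}_f \s{O}_+} \,\, = \,\, f^{-1}_L(\to_{\s{O}_+}) \,\, = \,\, f^{-1}_L(\to_{\s{O}}).
	\]
For the coinduction square, Remark \ref{rem:derindrescoind} identifies $\bb{R}\t{coind}_f$ (in $\b{Op}^{G'}_+$) with $\t{Ho}(\t{coind}_f)$ since all marked operads are fibrant, and Theorem \ref{thm:iminvimcompatindrescoind} then yields
	\[
	\to_{\t{coind}_f \s{O}} \,\, = \,\, \to_{\t{coind}_f \s{O}_+} \,\, = \,\, f_R(\to_{\s{O}_+}) \,\, = \,\, f_R(\to_{\s{O}}).
	\]
The primary technical point is verifying the commutation of the forgetful functor with coinduction; once this compatibility is established, the rest is a straightforward reduction to the marked theorem.
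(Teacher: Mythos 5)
Your proof is correct. For the restriction square you do exactly what the paper does: choose markings, invoke Corollary~\ref{cor:invimresNinfty}, and note that transfer systems depend only on the underlying operad. For the coinduction square, however, your route differs from the paper's. The paper, as in the proof of Theorem~\ref{thm:iminvimcompatindrescoind}, establishes only the restriction square directly and then deduces the coinduction square by uniqueness of adjoints (the adjunction $\t{Ho}(\t{res}_f) \dashv \t{Ho}(\t{coind}_f)$ on the operad side matching $f^{-1}_L \dashv f_R$ on the transfer system side, with $\b{Tr}(G')$ being a poset forcing the natural isomorphism to be an equality). You instead verify the coinduction square directly: you check that the forgetful functor $\b{Op}^{G}_+ \to \b{Op}^{G}$, being a right adjoint, commutes with pointwise right Kan extension along $Bf$, reduce to the marked coinduction square of Theorem~\ref{thm:iminvimcompatindrescoind}, and use Remark~\ref{rem:derindrescoind} to dispose of the derived functor. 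Both arguments are valid; the paper's is slightly shorter since it avoids the compatibility check between marked and unmarked coinduction, whereas yours makes the coinduction square self-contained rather than relying on adjointness bookkeeping. Your presentation implicitly uses the lemma preceding the proposition (that $\t{res}_f$ and $\t{coind}_f$ preserve $N$ operads and weak equivalences between them) to make $\t{Ho}(\t{coind}_f)$ meaningful on $\t{Ho}(N\t{-}\b{Op}^G)$, which is fine since that lemma is established beforehand.
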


\begin{proof} As in the proof of Theorem \ref{thm:iminvimcompatindrescoind}, it is enough to show $\to_{\t{res}_f \s{O}} \,\, =  f^{-1}_L(\to_{\s{O}})$ for any $\s{O} \in N\t{-}\b{Op}^{G'}$. Choose markings $u \in \s{O}(0)^{G'}$ and $\s{O}(2)^{G'}$ and regard $\s{O}$ as a $N$ operad in $\b{Op}^{G'}_+$. Then Corollary \ref{cor:invimresNinfty} gives the desired result.
\end{proof}

\subsection{Topological interpretations}\label{subsec:topinterpindrescoind} As in \S\ref{subsec:topprodcopten}, we think of induction, restriction, and coinduction for $N$ operads and marked $G$-operads as the homotopically correct constructions, and then we use the functor $\bb{E} : \b{Set} \to \b{Top}$ from \S\ref{sec:background} to push things into topology. This section describes the results.

We begin with restriction and coinduction. In this case, it is simplest to model $N_\infty$ operads as $N$ operads via the functors $\bb{E} : N\t{-}\b{Op}^G \rightleftarrows N_\infty\t{-}\b{Op}^G : (-)^u$ from Proposition \ref{prop:Eu}. We temporarily introduce the following definitions.

\begin{defn}Let $f: G \to G'$ be a homomorphism between finite groups, and suppose that $\s{O}$ is a $N_\infty$ $G$-operad and that $\s{O}'$ is a $N_\infty$ $G'$-operad. Define $N_\infty$ restriction and coinduction by
	\[
	\t{res}_f^{N_\infty} \s{O}' = \bb{E}(\t{res}_f(\s{O}'^u)) \quad\t{and}\quad \t{coind}_f^{N_\infty} \s{O} = \bb{E}(\t{coind}_f(\s{O}^u)) ,
	\]
where $\t{res}_f$ and $\t{coind}_f$ denote ordinary operadic restriction and coinduction.
\end{defn}

These derived constructions agree with the ordinary ones.

\begin{prop} Suppose $f : G \to G'$ is a homomorphism between finite groups.
	\begin{enumerate}
		\item{}For any $N_\infty$ $G'$-operad $\s{O}'$, there is an equivalence $\t{res}_f \s{O}' \simeq \t{res}_f^{N_\infty}\s{O}'$, and therefore $\to_{\t{res}_f \s{O}'} \,\, = f^{-1}_L(\to_{\s{O}'})$.
		\item{}For any $N_\infty$ $G$-operad $\s{O}$, there is an equivalence $\t{coind}_f \s{O} \simeq \t{coind}_f^{N_\infty} \s{O}$, and therefore $\to_{\t{coind}_f \s{O}} \,\, = f_R(\to_{\s{O}})$.
	\end{enumerate}
\end{prop}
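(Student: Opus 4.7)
The plan is to show that $\t{res}_f \s{O}'$ and $\t{coind}_f \s{O}$ are themselves $N_\infty$ operads, so that Proposition \ref{prop:Eu} will directly supply equivalences $\t{res}_f \s{O}' \simeq \bb{E}((\t{res}_f \s{O}')^u) = \t{res}_f^{N_\infty} \s{O}'$ and $\t{coind}_f \s{O} \simeq \bb{E}((\t{coind}_f \s{O})^u) = \t{coind}_f^{N_\infty} \s{O}$. The middle equalities hold because both $\t{res}_f$ and $\t{coind}_f$ are defined pointwise (as pullback and limit along $Bf : BG \to BG'$), so they commute with the forgetful functor $(-)^u$. The transfer system identifications will then follow by applying Proposition \ref{prop:unmarkrescoind} to the discrete restrictions and coinductions of the underlying $N$ operads $\s{O}'^u$ and $\s{O}^u$, and using that $\bb{E}$ preserves admissible sets.

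For part $(1)$, the $N_\infty$ verification is nearly immediate. The $\Sigma_n$-action on $(\t{res}_f \s{O}')(n) = \s{O}'(n)$ is unchanged, so $\Sigma$-freeness persists. For any subgroup $\Xi \subset G \times \Sigma_n$, the identity $(\t{res}_f \s{O}')(n)^\Xi = \s{O}'(n)^{(f \times \t{id})\Xi}$ reduces the empty-or-contractible condition to the $N_\infty$ property of $\s{O}'$ at the subgroup $(f \times \t{id})\Xi \subset G' \times \Sigma_n$. The $G$-fixed operations at arity $n$ lie in the contractible space $\s{O}'(n)^{f(G)}$, which is nonempty because it contains $\s{O}'(n)^{G'}$.

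Part $(2)$ requires a more substantive fixed point computation. For any subgroup $\Xi \subset G' \times \Sigma_n$, the adjunction $\t{res}_f \dashv \t{coind}_f$ at arity $n$ yields
\[
(\t{coind}_f \s{O})(n)^\Xi \cong \t{Map}_{G \times \Sigma_n}\bigl( \t{res}_{f \times \t{id}} ((G' \times \Sigma_n)/\Xi) , \s{O}(n) \bigr) .
\]
The restricted $(G \times \Sigma_n)$-set decomposes into a finite coproduct of transitive orbits $(G \times \Sigma_n)/\Xi_k$, so the mapping space splits as a finite product of fixed point subspaces $\s{O}(n)^{\Xi_k}$, each of which is empty or contractible by the $N_\infty$ property of $\s{O}$. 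Therefore $(\t{coind}_f \s{O})(n)^\Xi$ is empty or contractible. $\Sigma$-freeness is inherited by evaluation at any point of $G'$, and specializing $\Xi = G' \times \{1\}$ gives $\t{res}_{f \times \t{id}} ((G' \times \Sigma_n)/(G' \times \{1\})) \cong (G \times \Sigma_n)/(G \times \{1\})$, so the product reduces to the single contractible factor $\s{O}(n)^G$.

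The main obstacle is ensuring that the orbit decomposition in part $(2)$ works for arbitrary subgroups $\Xi$, not just graph subgroups. This turns out to be automatic: the $N_\infty$ definition requires empty-or-contractible fixed points at every subgroup of $G \times \Sigma_n$, and the splitting of $\t{res}_{f \times \t{id}} ((G' \times \Sigma_n)/\Xi)$ into transitive orbits is a purely combinatorial double-coset fact that does not care whether $\Xi$ is a graph subgroup. Once the $N_\infty$ conditions are in hand, Proposition \ref{prop:Eu} completes the equivalences, and Proposition \ref{prop:unmarkrescoind} delivers the transfer system identities.
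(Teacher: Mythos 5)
Your proposal is correct and follows the same overall route as the paper: establish that $\t{res}_f$ and $\t{coind}_f$ preserve $N_\infty$ operads, invoke Proposition \ref{prop:Eu} for the equivalences, and read off the transfer systems from Proposition \ref{prop:unmarkrescoind}. The paper simply asserts the two preservation claims, whereas you supply the verification. Your restriction argument matches the implicit one exactly. For coinduction, your fixed-point computation via the adjunction, double-coset orbit decomposition, and product-splitting is a clean way to check the empty-or-contractible condition uniformly over all subgroups $\Xi \subset G' \times \Sigma_n$, and it is more explicit than anything in the paper at this point. One small presentational remark: the justification that $(-)^u$ commutes with $\t{coind}_f$ is not quite ``pointwiseness''; the paper phrases it as the fact that the forgetful functor $U : \b{Top} \to \b{Set}$ preserves limits, since $\t{coind}_f$ is a right Kan extension computed as a limit. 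That is the cleaner way to state what you mean, but the substance of your argument is unaffected.
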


\begin{proof} We begin with restriction. The functor $(-)^u$ commutes with restriction, and $\t{res}_f$ preserves $N_\infty$ operads. Therefore $\t{res}_f^{N_\infty} \s{O}' \cong \bb{E}((\t{res}_f \s{O}')^u) \simeq \t{res}_f \s{O}'$ and
	\[
	\to_{\t{res}_f \s{O}'} \,\, = \,\, \to_{\bb{E}(\t{res}_f(\s{O}'^u))} \,\, = \,\, \to_{\t{res}_f(\s{O}'^u)} \,\, = f^{-1}_L( \to_{\s{O}'^u} ) = f^{-1}_L( \to_{\s{O}'} )
	\]
by Proposition \ref{prop:unmarkrescoind}. For part (2), note that the functor $(-)^u$ commutes with coinduction because the forgetful functor $U : \b{Top} \to \b{Set}$ from the category of compactly generated weak Hausdorff spaces preserves limits. Coinduction also preserves $N_\infty$ operads, and now we may argue as before.
\end{proof}

The situation for $N_\infty$ induction is more complicated. To make sense of the construction, we model $N_\infty$ $G$-operads as marked $G$-operads in $\b{Op}^G_+$. There is a DK equivalence $\bb{LE} = \bb{E} \circ Q : \b{Op}^G_+ \to N_\infty\t{-}\b{Op}^G$, where $Q$ denotes cofibrant replacement in $\b{Op}^G_+$, and the functor $F_+ \circ (-)^u : N_\infty\t{-}\b{Op}^G \to \b{Op}^G_+$ is inverse to $\bb{LE}$ up to zig-zags of natural weak equivalences. This justifies the next definition.

\begin{defn} Suppose $f : G \hookrightarrow G'$ is an injective homomorphism between finite groups. For any $N_\infty$ $G$-operad $\s{O}$, let
	\[
	\t{ind}_f^{N_\infty} \s{O} = \bb{E}(\t{ind}_f F_+(\s{O}^u)) ,
	\]
where $\t{ind}_f$ denotes ordinary operadic induction.
\end{defn}

The operad $\t{ind}_f^{N_\infty} \s{O}$ has the desired homotopy type, because the cofibrancy of $F_+(\s{O}^u)$ implies there is an equivalence
	\[
	\bb{E} (\t{ind}_f F_+(\s{O}^u)) \simeq  \bb{LE} (\bb{L} \t{ind}_f F_+(\s{O}^u)),
	\]
and therefore $\to_{\t{ind}_f^{N_\infty}\s{O}} \,\, = f_L (\to_{\s{O}})$ by Theorem \ref{thm:iminvimcompatindrescoind}. Unfortunately, the operad $\t{ind}_f^{N_\infty} \s{O}$ is quite far from the ordinary induced operad $\t{ind}_f \s{O}$, and it seems difficult to induce an action by a $N_\infty$ $H$-operad up to an action by a $N_\infty$ $G$-operad in general. The basic issue is illustrated below.

\begin{ex} Induction is an indexed coproduct, so we shall elaborate on Example \ref{ex:Ninftycop}. Suppose $i : H \hookrightarrow G$ is the inclusion of a subgroup, $X$ is a $G$-space, and $\s{O}$ is a $N_\infty$ $H$-operad. An action of $\t{ind}_i \s{O}$ on $X$ is equivalent to an action of $\s{O}$ on $\t{res}_i X$, and if $f \in \s{O}(n)$ represents an operation $F$ on $X$, then $\s{O}$ parametrizes coherence homotopies between all $H \times \Sigma_n$-conjugates of $F$. On the other hand, an action of $\t{ind}_i^{N_\infty}\s{O}$ must parametrize coherence homotopies between all $G \times \Sigma_n$-conjugates of $F$. The $\s{O}$-action gives homotopies between sets of $g(H \times \Sigma_n)$-conjugates for each $g \in G$, but nothing between $(g,\sigma) \cdot F$ and $(g',\sigma') \cdot F$ if $(g,\sigma)$ and $(g',\sigma')$ are in different $H \times \Sigma_n$-cosets of $G \times \Sigma_n$.

As before, there is more hope if we work in a marked setting. If $\s{O}$ has a distinguished unit $u \in \s{O}(0)^H$ and product $p \in \s{O}(2)^H$ that represent $G$-fixed operations over $X$, then $\s{O}$ specifies a homotopy between $F$ and $P(\dots P(P(x_1,x_2),x_3), \dots , x_n)$, which conjugates to a homotopy between $g \cdot F$ to $P(\dots P(P(x_1,x_2),x_3), \dots , x_n)$ for any $g \in G$. We can concatenate these two homotopies just as we did in Example \ref{ex:Ninftycop}, but this composite might not have the right equivariance.
\end{ex}

\appendix
\section{Quotient operads}\label{app:quotop}

If $\s{O}$ is an operad and $\sim$ is a congruence relation on $\s{O}$, then identifying $\ol{\s{O}} = \s{O}/\!\sim$ typically amounts to solving a word problem. In general, these problems can be quite complicated, but we can gain traction in a few cases by introducing a ``direction'' to the relation $\sim$. In this appendix, we explain how to use this technique to identify quotient operads (Propositions \ref{prop:idquot} and \ref{prop:idquotcrit}), and then we analyze the coproduct $\s{O} * \s{P}$ of $N$ operads (Example-Lemma \ref{exlem:Nopcop}) and the tensor product $\s{O} \otimes \s{P}$ of free $G$-operads (Lemma \ref{lem:freeopten}).

\subsection{Solving operadic word problems}\label{subsec:solveword} Throughout this section, we assume that $\s{O}$ is an operad in $\b{Set}^G$, $c : \coprod_{n \geq 0} \s{O}(n) \to \bb{N}$ is a function, and $R = (R_n)_{n \in \bb{N}}$ is a graded binary relation on $\s{O}$ such that for any integer $n \geq 0$ and operations $f,f' \in \s{O}(n)$, if $f R f'$, then $c(f) > c(f')$. We think of $c$ as a complexity function and $R$ as a complexity-reducing relation. In practice, $\s{O}$ will be a free operad whose operations $f \in \s{O}$ are formal composites, $c(f)$ will be a weighted count of the terms in a composite $f \in \s{O}$, and $R$ will indicate a reduction of one composite $f$ into another composite $f'$. Accordingly, we introduce some terminology.

\begin{defn} An operation $f \in \s{O}$ is \emph{reduced} if there is no $f' \in \s{O}$ such that $f R f'$. An operation $h$ is a \emph{reduced form} of the operation $f$ if:
	\begin{enumerate}[label=(\alph*)]
		\item{}the operation $h$ is reduced, and
		\item{} there is a chain $f = f_0 R f_1 R \cdots R f_n = h$ of $R$-relations connecting $f$ to $h$.
	\end{enumerate}
The case $n=0$ is allowed, in which case the chain reads $f = f_0 = h$.
\end{defn}

Given $\s{O}$, $c$, and $R$ as above, we would like to say that:
	\begin{enumerate}
		\item{}every operation $f \in \s{O}$ has a unique reduced form $\ol{f}$, and
		\item{}the reduced operations in $\s{O}$ are a set of representatives for the congruence relation $\la R \ra$ that $R$ generates.
	\end{enumerate}
For our purposes, it will also be convenient if
	\begin{enumerate}[resume]
		\item{}the set $r\s{O}$ of reduced operations forms a sub-$G$-symmetric sequence of $\s{O}$. 
	\end{enumerate}
If these three conditions hold, then we can easily identify the quotient $\ol{\s{O}} = \s{O}/\la R \ra$.

\begin{prop}\label{prop:idquot} Assume that $\s{O}$ is a $N$ operad, and that conditions (1)--(3) hold. Then the underlying symmetric sequence of $\ol{\s{O}} = \s{O}/\la R \ra$ is isomorphic to $r\s{O}$, and equipping $r\s{O}$ with the operad structure
	\[
	\gamma_{r\s{O}}(f;h_1,\dots,h_n) = \ol{\gamma_{\s{O}}(f;h_1,\dots,h_n)} \quad\t{and}\quad \t{id}_{r\s{O}} = \ol{\t{id}_{\s{O}}}.
	\]
makes $r\s{O}$ and $\ol{\s{O}}$ isomorphic as operads. It follows that $\ol{\s{O}}$ is a $N$ operad with the same admissible sets as $\s{O}$.
\end{prop}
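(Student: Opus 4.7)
The plan is to directly identify $\ol{\s{O}}$ as a $G$-symmetric sequence with $r\s{O}$ and then transport the operad structure. First I would form the composite
\[
\iota : r\s{O} \hookrightarrow \s{O} \xrightarrow{q} \ol{\s{O}},
\]
where the inclusion is a map of $G$-symmetric sequences by hypothesis (3) and $q$ is the quotient map. Condition (1) says every class $[f]$ has a reduced representative, so $\iota$ is levelwise surjective; condition (2) says distinct reduced operations lie in distinct $\la R \ra$-classes, so $\iota$ is levelwise injective. Hence $\iota$ is an isomorphism of $G$-symmetric sequences. This step should be routine once conditions (1)--(3) are in hand.

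Next I would transport the operad structure from $\ol{\s{O}}$ to $r\s{O}$ along $\iota$. Since $\iota$ sends $h \in r\s{O}$ to $[h] \in \ol{\s{O}}$, the transported composition at $f, h_1,\ldots,h_n \in r\s{O}$ is
\[
\iota^{-1}\bigl(\gamma_{\ol{\s{O}}}([f]; [h_1],\ldots,[h_n])\bigr) = \iota^{-1}\bigl([\gamma_{\s{O}}(f;h_1,\ldots,h_n)]\bigr) = \ol{\gamma_{\s{O}}(f;h_1,\ldots,h_n)},
\]
and similarly the identity transports to $\ol{\t{id}_{\s{O}}}$. These are precisely the formulas stated in the proposition, so $r\s{O}$ with the given structure maps is tautologically an operad isomorphic to $\ol{\s{O}}$ via $\iota$.

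Finally, I would deduce the remaining claims about $\ol{\s{O}}$. $\Sigma$-freeness transfers along $\iota$ from $r\s{O}$, which is $\Sigma$-free as a subobject of the $\Sigma$-free $\s{O}$. For the equality of admissible sets, the inclusion $A(\ol{\s{O}}) \subset A(\s{O})$ is immediate from the $G$-symmetric sequence map $r\s{O} \hookrightarrow \s{O}$. For the reverse inclusion, given a graph subgroup $\Gamma \subset G \times \Sigma_n$ and $f \in \s{O}(n)^{\Gamma}$, I would look at $h = \ol{f} \in r\s{O}(n)$. For any $(g,\sigma) \in \Gamma$, the operation $(g,\sigma)\cdot h$ is again reduced by (3), and it lies in the same $\la R \ra$-class as $h$ because $q$ is equivariant and $(g,\sigma)\cdot f = f$. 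By the uniqueness clause of (2), $(g,\sigma)\cdot h = h$, so $h \in r\s{O}(n)^{\Gamma}$ and therefore $[h] = \iota(h) \in \ol{\s{O}}(n)^{\Gamma}$.

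The only delicate point is this last step, where one must verify that the $G \times \Sigma_n$-action interacts well enough with the reduction procedure to pick out a $\Gamma$-fixed reduced representative from a $\Gamma$-fixed class. Hypothesis (3) is precisely what makes this argument go through; once one believes the action preserves $r\s{O}$ pointwise, the uniqueness of reduced forms (condition (2)) forces invariance of the representative.
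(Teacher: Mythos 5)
Your proof is correct and follows essentially the same route as the paper: identify $r\s{O}$ with $U\ol{\s{O}}$ via the composite of the inclusion and the quotient, transport the operad structure along this isomorphism, and then read off $\Sigma$-freeness and the admissible sets from the resulting chain of symmetric-sequence maps. The one place you do strictly more work than necessary is the reverse inclusion $A(\s{O}) \subset A(\ol{\s{O}})$: you argue carefully that the unique reduced form $\ol{f}$ of a $\Gamma$-fixed $f$ is itself $\Gamma$-fixed, but this is not needed — the quotient map $U\s{O} \twoheadrightarrow U\ol{\s{O}}$ is $G \times \Sigma_n$-equivariant, so $q(f) = [f]$ is already a $\Gamma$-fixed point of $\ol{\s{O}}(n)$, giving the inclusion immediately. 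Your subtler argument is still valid (and proves the mildly stronger fact that $r\s{O}(n)^\Gamma \to \ol{\s{O}}(n)^\Gamma$ is onto), so there is no gap, just an avoidable detour.
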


\begin{proof} Consider the composite $\vp : r\s{O} \hookrightarrow U\s{O} \twoheadrightarrow U\ol{\s{O}}$ of the inclusion and the quotient. By (3), this is a map of symmetric sequences. By (1) and (2), the unique reduced representative of a class $[f] \in \ol{\s{O}}$ is $\ol{f}$, and therefore $\vp$ has an inverse given by the formula $\vp^{-1}[f] = \ol{f}$. Therefore $r\s{O} \cong U\ol{\s{O}}$, and we translate the operad structure from $\ol{\s{O}}$ to $r\s{O}$ using $\vp$ and $\vp^{-1}$.

We have a sequence $U\s{O} \twoheadrightarrow U\ol{\s{O}} \cong r\s{O} \hookrightarrow U\s{O}$ of maps of symmetric sequences. The quotient map $U\s{O} \twoheadrightarrow U\ol{\s{O}}$ ensures that $\ol{\s{O}}(n)^G$ is nonempty for all $n \geq 0$, and the inclusion map $U\ol{\s{O}} \hookrightarrow U\s{O}$ ensures that $\ol{\s{O}}$ is $\Sigma$-free. Therefore $\ol{\s{O}}$ is a $N$ operad. As for its admissible sets, the quotient map implies $A(\s{O}) \subset A(\ol{\s{O}})$, and the inclusion map implies $A(\ol{\s{O}}) \subset A(\s{O})$.
\end{proof}

Conditions (1)--(3) are not automatic, but we can enforce them by placing a few assumptions on the relation $R$.

\begin{prop}\label{prop:idquotcrit} Assume that $R$ has the following four properties:
	\begin{enumerate}[label=(\roman*)]
		\item{}For any integer $n \geq 0$ and operations $f,h,h' \in \s{O}(n)$, if $fRh$ and $fRh'$, then there is an operation $k \in \s{O}(n)$ and a pair of coterminal chains $h = h_0 R h_1 R \cdots R h_m = k$ and $h' = h'_0 R h'_1 R \cdots R h'_{m'} = k$ with $m,m' \geq 0$.
		\item{}For any integer $n \geq 0$, operations $f,f' \in \s{O}(n)$, and group elements $g \in G$ and $\sigma \in \Sigma_n$, if $f R f'$, then $(gf\sigma) R (gf'\sigma)$.
		\item{}For any integers $n , m_1 , \dots , m_n \geq 0$, and operations $f, f'  \in \s{O}(n)$, $k_1 \in \s{O}(m_1)$, \dots, $k_n \in \s{O}(m_n)$, if $f R f'$, then $\gamma(f;k_1,\dots,k_n) R \gamma(f';k_1,\dots,k_n)$.
		\item{}For any integers $n , m_1 , \dots , m_n \geq 0$ and $1 \leq i \leq n$, and operations $f \in \s{O}(n)$, $k_1 \in \s{O}(m_1)$, \dots , $k_i,k_i' \in \s{O}(m_i)$,\dots , $k_n \in \s{O}(m_n)$, if $k_i R k_i'$, then $\gamma(f;k_1,\dots,k_i,\dots,k_n) R \gamma(f;k_1,\dots,k_i',\dots,k_m)$.
	\end{enumerate}
Then conditions (1), (2), and (3) hold.
\end{prop}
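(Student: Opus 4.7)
The plan is to import the standard machinery of abstract rewriting theory. First, I would note that the complexity function $c$ makes $R$ terminating: any chain $f_0 R f_1 R f_2 R \cdots$ yields a strictly decreasing sequence $c(f_0) > c(f_1) > \cdots$ of non-negative integers, which must be finite. Hence every $f \in \s{O}$ has at least one reduced form, obtained by following any maximal $R$-chain starting at $f$.

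Next, I would observe that hypothesis (i) is precisely the local confluence of $R$: any pair of one-step divergences $h \, R^{-1} \, f \, R \, h'$ can be joined by $R$-chains terminating at some common $k$. Newman's lemma, whose hypotheses are termination and local confluence, then gives the Church-Rosser property, i.e., full confluence of $R$. Combined with the existence statement above, this yields condition (1), since any two reduced forms of $f$ are joined by $R$-chains to a common descendant, which must equal both of them.

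For condition (3), I would show directly that $r\s{O}$ is closed under the $G$- and $\Sigma$-actions. If $f \in r\s{O}$, $g \in G$, $\sigma \in \Sigma_n$, and $g f \sigma \, R \, h'$, then applying hypothesis (ii) with the elements $g^{-1}$ and $\sigma^{-1}$ gives $f = g^{-1}(g f \sigma)\sigma^{-1} \, R \, g^{-1} h' \sigma^{-1}$, contradicting the reducedness of $f$. Therefore $g f \sigma$ is reduced.

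For condition (2), the key observation is that hypotheses (ii)--(iv) force the congruence $\la R \ra$ to coincide with the plain equivalence relation $E$ generated by $R$ on $\coprod_n \s{O}(n)$. Indeed, closure of $E$ under the $G \times \Sigma$-action follows by applying (ii) to each edge of a zig-zag connecting $f$ and $f'$, and closure under substitution in either slot of $\gamma$ follows similarly from (iii) and (iv), first varying the outer operation and then varying each inner operation in turn. Hence $E$ is already a congruence, so $E = \la R \ra$. Now if $f$ and $h$ are both reduced and $f \la R \ra h$, then Church-Rosser applied to the zig-zag connecting them produces a common $R$-descendant, which must equal both $f$ and $h$ since neither admits an $R$-step. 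This establishes uniqueness in (2); existence is immediate from $[f] = [\ol{f}]$. The main subtlety is the verification that (ii)--(iv) really do collapse the congruence closure to the equivalence closure, but this is essentially a diagram-chase along zig-zags, propagating each elementary $R$-step through the ambient composite one position at a time.
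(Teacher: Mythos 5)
Your proof is correct and follows the same essential strategy as the paper. The main presentational differences are that you cite Newman's lemma rather than reproving it (the paper's proof of condition~(1) is precisely the termination-plus-local-confluence induction, carried out inline by induction on the complexity $c$), and for condition~(2) you show directly that the equivalence closure $E$ of $R$ is already a congruence, then invoke Church--Rosser, whereas the paper defines $\sim$ by $f \sim h \iff \ol{f} = \ol{h}$, verifies it is a congruence, and checks minimality; the two routes are logically equivalent, and both hinge on the same propagation of hypotheses (ii)--(iv) along chains.
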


\begin{proof}First of all, if $f \in \s{O}(n)$ is unreduced and $(g,\sigma) \in G \times \Sigma_n$, then (ii) implies that $g f \sigma$ is unreduced, and conversely. Condition (3) follows.

Next, we prove condition (1). First, observe that if $f$ is reduced and $\ol{f}$ is a reduced form of $f$, then any chain $f = f_0 R f_1 R \cdots R f_n = \ol{f}$ must degenerate to $f = f_0 = \ol{f}$. Therefore $f$ is its own, unique reduced form.

Now we argue by induction on the complexity of $f \in \s{O}$. Suppose first that $c(f) = 0$. Then $f$ is reduced, because $R$ strictly reduces $c$ and $c$ is nonnegative. Therefore $f$ is its own, unique reduced form. Next, suppose inductively that every $f'$ with $c(f') \leq N$ has a unique reduced form, and assume $c(f) = N+1$. If $f$ is reduced, then we are done. If not, then there is $h \in \s{O}$ such that $f R h$, and since $N+1 = c(f) > c(h)$, the operation $h$ has a unique reduced form $\ol{h}$. We claim that $\ol{h}$ is also the unique reduced form of $f$. For suppose $\ol{f}$ is reduced and we have a chain $f = f_0 R f_1 R \cdots R f_n = \ol{f}$. We must show that $\ol{h} = \ol{f}$. The inequality $n > 0$ holds because $f$ is unreduced, and it follows that $\ol{f}$ is the unique reduced form of $f_1$. By (i), there are coterminal chains $h = h_0 R \cdots R h_m = k$ and $f_1 = h_0' R \cdots R h_{m'}' = k$, and the operation $k$ has a unique reduced form $\ol{k}$ because $N \geq c(h) \geq c(k)$. Concatenating the chains $h R \cdots R k$ and $f_1 R \cdots R k$ with a chain $k R \cdots R \ol{k}$ exhibits $\ol{k}$ as the unique reduced form of $h$ and $f_1$. Therefore $\ol{h} = \ol{k} = \ol{f}$, which proves that $f$ has a unique reduced form. Condition (1) follows by induction.

Finally, we prove condition (2) by giving an explicit description of the congruence relation generated by $R$. Let $\ol{(-)} : \s{O} \to \s{O}$ be the function that sends an operation $f$ to its unique reduced form $\ol{f}$, and declare $f \sim h$ if and only if $\ol{f} = \ol{h}$. Then $\sim$ is a graded equivalence relation. We shall show it is a congruence relation. Given any $n$-ary operation $f$ and $(g,\sigma) \in G \times \Sigma_n$, there is a chain $f = f_0 R \cdots R f_n = \ol{f}$, and applying (ii) gives another chain $(gf\sigma) = (g f_0 \sigma) R \cdots R (g f_n \sigma) = (g \ol{f} \sigma)$. Since $g \ol{f} \sigma$ is reduced, this shows that $\ol{g f \sigma} = g \ol{f} \sigma$. Thus, if $f \sim h$, then $\ol{g f \sigma} = g \ol{f} \sigma = g \ol{h} \sigma = \ol{g h \sigma}$, and hence $g f \sigma \sim g h \sigma$. Now suppose $f , h_1 , \dots , h_n$ are operations, where $n$ is the arity of $f$, and consider the composite $\gamma(f;h_1,\dots,h_n)$. There is a $R$-chain connecting $f$ to $\ol{f}$, and applying (iii) gives a $R$-chain from $\gamma(f;h_1,\dots,h_n)$ to $\gamma(\ol{f};h_1,\dots,h_n)$. Applying (iv) to the $R$-chains for $h_1,\dots,h_n$ and concatenating the results gives a $R$-chain from $\gamma(f;h_1,\dots,h_n)$ to $\gamma(\ol{f};\ol{h_1},\dots,\ol{h_n})$, which we then may concatenate with a chain from $\gamma(\ol{f};\ol{h_1},\dots,\ol{h_n})$ to $\ol{\gamma(\ol{f};\ol{h_1},\dots,\ol{h_n})}$. This shows that $\ol{\gamma(f;h_1,\dots,h_n)} = \ol{\gamma(\ol{f};\ol{h_1},\dots,\ol{h_n})}$. Thus, if $f \sim f'$, $h_1 \sim h_1'$, \dots, $h_n \sim h_n'$, then $\ol{\gamma(f;h_1,\dots,h_n)} = \ol{\gamma(\ol{f};\ol{h_1},\dots,\ol{h_n})} = \ol{\gamma(\ol{f'};\ol{h_1'},\dots,\ol{h_n'})} = \ol{\gamma(f';h_1',\dots,h_n')}$, and hence $\gamma(f;h_1,\dots,h_n) \sim \gamma(f';h_1',\dots,h_n')$. It follows that $\sim$ is a congruence relation.

Now suppose $\approx$ is any congruence relation on $\s{O}$ that contains $R$. Given any operation $f \in \s{O}$, the existence of a $R$-chain $f = f_0 R \cdots R f_n = \ol{f}$ implies that $f \approx \ol{f}$. Thus, if $f \sim h$, then $f \approx \ol{f} = \ol{h} \approx h$, and hence $f \approx h$. This shows that $\sim$ is the least congruence relation containing $R$. To find representatives for $\sim \,\, = \la R \ra$, note that if $f \in \s{O}$ is reduced, then $\ol{f} = f$. It follows that $\ol{f} \sim f$, because $\ol{\ol{f}} = \ol{f}$, and if $h$ is any reduced operation such that $h \sim f$, then $h = \ol{h} = \ol{f}$. Therefore every $f \in \s{O}$ is $\sim$-equivalent to a unique reduced operation, namely $\ol{f} \in r\s{O}$. This proves condition (2).
\end{proof}

One can often restrict the chains in (i) to equalities $h = k = h'$ or to individual $R$-relations $h R k$ and $h' Rk$, but we will use more general chains in our analysis of tensor products. The consequent clauses in (ii)--(iv) can also be weakened to allow for $R$-chains rather than just $R$-relations, but Propositions \ref{prop:idquot} and \ref{prop:idquotcrit} seem to apply as stated in many interesting examples. For example, we used them implicitly in \cite[\S8]{RubComb} to identify the associativity $N$ operad $\b{As}_{\c{T}}$, and the underlying nonequivariant operad of the free $G$-operad $F(S)$ on a $\Sigma$-free symmetric sequence $S$.

\subsection{Free operads}\label{subsec:freeop} In preparation for our analysis of coproducts and tensor products, we briefly recall a description of the free operad $F(S)$ on a $\Sigma$-free symmetric sequence $S$. The following is taken from \cite[\S\S7--8]{RubComb}.

Suppose $S$ is a $\Sigma$-free symmetric sequence in $\b{Set}^G$, and think of the elements $f \in S(n)$ as formal $n$-ary operations. Just as the free monoid on a set $X$ is a collection of formal products of elements of $X$, the free symmetric operad on $S$ is a collection of formal composites of operations in $S$. The wrinkle is that we can restrict our operations to a set of $\Sigma$-orbit representatives for $S$ by using the $\Sigma$-equivariance of composition.

For each $n \geq 0$, choose a set $\ol{S}(n) \subset S(n)$ of $\Sigma_n$-orbit representatives. Now consider formal words $w$, such that each letter of $w$ is either:
	\begin{enumerate}[label=(\alph*)]
		\item{}a element of $\ol{S}$,
		\item{}a variable symbol $x_i$ for some integer $i > 0$,
		\item{}a left or right parenthesis, or
		\item{}a comma.
	\end{enumerate}
Such a word is a \emph{term} if it is built at some stage of the following recursion:
	\begin{enumerate}
		\item{}every variable $x_i$ is a term, and
		\item{}if $f \in \ol{S}(n)$ and $t_1,\dots,t_n$ are terms, then $f(t_1,\dots,t_n)$ is also a term.
	\end{enumerate}
The \emph{arity} of a term is the number of distinct variable symbols that occur in it, and a $n$-ary term $t$ is \emph{operadic} if each of the variables $x_1,\dots,x_n$ appears exactly once in $t$. The $n$th level of the free operad $F(S)$ is the set of all $n$-ary operadic terms.

If $t$ is a $n$-ary operadic term and $\sigma \in \Sigma_n$, then the term $t \cdot \sigma$ is obtained by replacing each variable $x_i$ in $t$ with the variable $x_{\sigma^{-1}i}$.

If $t$ is a $k$-ary operadic term, and $s_i$ is a $j_i$-ary operadic term for $i = 1,\dots,k$, then the $(j_1 + \cdots + j_k)$-ary operadic term $\gamma(t;s_1,\dots,s_k)$ is obtained by adding $j_1 + \cdots + j_{i-1}$ to the subscript of each variable in $s_i$ -- call the result $s_i'$ -- and then substituting the (non-operadic) terms $s_1',\dots, s_k'$ for the variables $x_1,\dots,x_k$ in $t$. The term $x_1$ is the identity for $\gamma$.

The $G$-action on $F(S)$ is defined defined recursively. For any $g \in G$, declare
	\begin{enumerate}
		\item{}$g * x_i = x_i$ for every $i > 0$, and
		\item{}$g * f(t_1,\dots,t_n) = f'(g * t_{\sigma^{-1}1} , \dots , g * t_{\sigma^{-1}n})$, where $g \cdot f = f' \cdot \sigma$ for unique elements $f' \in \ol{S}(n)$ and $\sigma \in \Sigma_n$.
	\end{enumerate}

These data make $F(S)$ into a symmetric operad in $\b{Set}^G$, which is free on the $G$-symmetric sequence $S$. The unit map $\eta : S \to F(S)$ sends $f \in \ol{S}(n)$ to $f(x_1,\dots,x_n)$, and the rest is determined by $\Sigma$-equivariance.

\subsection{Coproducts and tensor products}\label{subsec:copten} This section analyzes two specific presentations of the coproduct of $N$ operads and the tensor product of free $G$-operads. We use Propositions \ref{prop:idquot} and \ref{prop:idquotcrit} to solve the associated word problems, and thus determine the underlying symmetric sequences of these operads.

We begin with coproducts. For motivation, suppose $F : \b{Set} \leftrightarrows \b{Grp} : U$ is the free-forgetful adjunction for nonabelian groups. Given $G, H \in \b{Grp}$, the coproduct $G * H$ may be constructed as a subset of the free group $F(UG \sqcup UH)$, equipped with a reduced concatenation product. This construction generalizes to operads. The next result is standard, but we include it as an example of how the assumptions in Proposition \ref{prop:idquotcrit} work.

\begin{exlem}\label{exlem:Nopcop} Suppose $\s{O}$ and $\s{P}$ are $N$ operads, and let $\s{O} * \s{P}$ be their coproduct in $\b{Op}^G$. Then $\s{O} * \s{P}$ is isomorphic to a sub-symmetric sequence of $F(U\s{O} \sqcup U \s{P})$, equipped with a modified composition operation.
\end{exlem}

\begin{proof} Suppose $F : \b{Sym}^G \rightleftarrows \b{Op}^G : U$ is the free-forgetful adjunction, and form the free operad $\s{F} = F(U\s{O} \sqcup U\s{P})$. Let $i : U\s{O} \hookrightarrow UFU\s{O} \to UF(U\s{O} \sqcup U\s{P})$ be the composite of the unit and the map induced by the inclusion $U\s{O} \hookrightarrow U\s{O} \sqcup U\s{P}$, and let $j : U\s{P} \to UF(U\s{O} \sqcup U\s{P})$ be defined similarly. Then $\s{O} * \s{P}$ is isomorphic to the quotient of $\s{F}$ by the congruence relation generated by
	\begin{align*}
		i(\t{id}_{\s{O}})  \,\, E \,\,  \t{id}_{\s{F}} \quad&,\quad i(h) \circ_k i(f) \,\, E \,\, i(h \circ_k f)	\\
		j(\t{id}_{\s{P}}) \,\, E \,\, \t{id}_{\s{F}} \quad&,\quad j(h) \circ_k j(f) \,\, E \,\, j(h \circ_k f) ,
	\end{align*}
where $\circ_k$ denotes partial composition, and the operations $h$ and $f$ are taken from $\s{O}$ in the first line and $\s{P}$ in the second line. We analyze this quotient using the model for $\s{F}$ described in \S\ref{subsec:freeop}.

Let $X$ and $Y$ be sets of $\Sigma$-orbit representatives for $\s{O}$ and $\s{P}$. Then the elements of $\s{F}$ are formal composites of operations in $X \sqcup Y$. Given two such composites $t$ and $t'$, declare $t R t'$ if we can obtain $t'$ from $t$ by replacing a subterm of $t$ in one of the following ways:
	\begin{enumerate}[label=(\alph*)]
		\item{}$\t{id}_{\s{O}}(t_1) \rightsquigarrow t_1$,
		\item{}$\t{id}_{\s{P}}(t_1) \rightsquigarrow t_1$,
		\item{}$h(t_1,\dots,f(t_k,\dots,t_{k+\abs{f}-1}),\dots,t_{\abs{h} + \abs{f} - 1}) \rightsquigarrow \ell (t_{\sigma^{-1}1}, \dots , t_{\sigma^{-1}(\abs{h} + \abs{f} - 1)})$, where $h,f \in X$ and $h \circ_k f = \ell \cdot \sigma$ for $\ell  \in X$ and $\sigma \in \Sigma_{\abs{h} + \abs{f} - 1}$, or
		\item{}$h(t_1,\dots,f(t_k,\dots,t_{k+\abs{f}-1}),\dots,t_{\abs{h} + \abs{f} - 1}) \rightsquigarrow \ell (t_{\sigma^{-1}1}, \dots , t_{\sigma^{-1}(\abs{h} + \abs{f} - 1)})$, where $h,f \in Y$ and $h \circ_k f = \ell \cdot \sigma$ for $\ell \in Y$ and $\sigma \in \Sigma_{\abs{h} + \abs{f} - 1}$.
	\end{enumerate}
Here, the $t_i$ are not necessarily operadic terms. The relation $R$ generates the same congruence relation as $E$.

For any formal composite $t \in \s{F}$, let $c(t)$ be the number of operation symbols in $t$. Then $R$ reduces $c$. The relation $R$ satisfies conditions (iii) and (iv) in Proposition \ref{prop:idquotcrit} because it is defined in terms of substitutions of subterms. It satisfies the $\Sigma$-equivariance portion of (ii) for the same reason. As for the $G$-equivariance, if $g \in G$, then mutliplication $g * (-)$ preserves (a)-substitutions because $\t{id}_{\s{O}}$ is $G$-fixed, and similarly for (b)-substitutions. For (c)-substitutions, we use the $G$-operad axioms and the $\Sigma$-freeness of $\s{F}$. Suppose $g \cdot h = h' \cdot \tau$ and $g \cdot f = f' \cdot \upsilon$, for $h',f' \in X$ and permutations $\tau, \upsilon$. Then $g * h(t_1,\dots,f(t_k,\dots,t_{k+\abs{f}-1}),\dots,t_{\abs{h} + \abs{f} - 1})$ equals
	\[
	h'(s_1,\dots,f'(s_{\tau k}, \dots, s_{\tau k + \abs{f} - 1}),\dots,s_{\abs{h} + \abs{f} -1})
	\]
where $s_i = g * t_{\alpha^{-1} i}$ for the permutation $\alpha = \tau(1,\dots,\abs{f},\dots,1) \cdot (\t{id} \sqcup \cdots \sqcup \upsilon \sqcup \cdots \sqcup \t{id})$. If $h' \circ_{\tau k} f' = m \cdot \pi$ for $m \in X$ and $\pi$ a permutation, then
	\[
	h'(s_1,\dots,f'(s_{\tau k}, \dots, s_{\tau k + \abs{f} - 1}),\dots,s_{\abs{h} + \abs{f} -1}) \rightsquigarrow m(s_{\pi^{-1}1}, \dots, s_{\pi^{-1}(\abs{h} + \abs{f} - 1)})
	\]
is a (c)-substitution. On the other hand, suppose $g \cdot \ell = \ell' \cdot \nu$ for $\ell' \in X$ and a permutation $\nu$. Then
	\[
	g * \ell(t_{\sigma^{-1}1},\dots,t_{\sigma^{-1}(\abs{h} + \abs{f} - 1)}) = \ell'(g * t_{\sigma^{-1}\nu^{-1}1},\dots, g * t_{\sigma^{-1}\nu^{-1}(\abs{h} + \abs{f} -1)}).
	\]
We claim that $m = \ell'$ and $s_{\pi^{-1}i} = g* t_{\sigma^{-1}\nu^{-1}i}$. To see this, note that
	\[
	m \cdot \pi \cdot \alpha = (g \cdot h) \circ_k (g \cdot f) = g \cdot (h \circ_k f) = g \cdot \ell \cdot \sigma = \ell' \cdot \nu \cdot \sigma.
	\]
Now, since $m,\ell' \in X$ are $\Sigma$-orbit representatives and $\s{O}$ is $\Sigma$-free, it follows that $m = \ell'$ and $\pi \cdot \alpha = \nu \cdot \sigma$, and therefore $s_{\pi^{-1}i} = g * t_{\alpha^{-1}\pi^{-1}i} = g * t_{\sigma^{-1} \nu^{-1}i}$. Thus $g * (-)$ preserves (c)-substitutions, and similarly for (d)-substitutions. This shows that $R$ satisfies condition (ii) of Proposition \ref{prop:idquotcrit}.

To verify that $R$ satisfies condition (i), we must analyze the degree to which substitutions (a)--(d) commute. There are 10 cases to consider, but most are uninteresting. For example, suppose $t R s$ via an (a)-substitution $\t{id}_{\s{O}}(t_1) \rightsquigarrow t_1$ and $t R s'$ via a (b)-substitution $\t{id}_{\s{P}}(t_1) \rightsquigarrow t_1$. Then these substitutions are disjoint, in the sense that they either occur in non-overlapping subwords of $t$, or one substitution occurs inside the $t_1$-term of the other. Thus, we obtain a term $r$ such that $s R r$ and $s' R r$ via the complementary substitutions. Similar reasoning applies for paired ((a),(d)), ((b),(c)), and ((c),(d))-substitutions. Likewise, if $t R s$ and $t R s'$ via two (a)-substitutions, then either we have made the same substitution and $s = s'$, or they are disjoint and there is a term $r$ such that $s R r$ and $s' R r$. Similarly for pairs of (b)-substitutions.

The interesting cases are those in which the substitutions can interact. For example, suppose $t R s$ and $t R s'$ via non-disjoint (a) and (c)-substitutions. Then either $h = \t{id}_{\s{O}}$ or $f = \t{id}_{\s{O}}$, and the operadic identity axiom implies that $s = s'$. Therefore condition (i) of Proposition \ref{prop:idquotcrit} holds for paired (a) and (c)-substitutions. Similarly for paired (b) and (d)-substitutions.

Now suppose that $t R s$ and $t R s'$ via unequal and non-disjoint (c)-substitutions
	\[
	h_i(t_1,\dots,f_i(t_k,\dots,t_{k+\abs{f}-1}),\dots,t_{\abs{h} + \abs{f} - 1}) \rightsquigarrow \ell_i (t_{\sigma^{-1}1}, \dots , t_{\sigma^{-1}(\abs{h} + \abs{f} - 1)})
	\]
for $i = 1,2$. There are three possibilities: either $h_2 = f_1$, or $h_1 = f_2$, or $h_1 = h_2$ and $f_1$ and $f_2$ occur in different positions. In any case, by using the associativity and $\Sigma$-equivariance axioms for partial composition, we can find an $r$ such that $s R r$ and $s' R r$. Therefore condition (i) holds for pairs of (c)-substitutions, and similarly for pairs of (d)-substitutions.

By Proposition \ref{prop:idquotcrit}, we deduce that conditions (1)--(3) of \S\ref{subsec:solveword} hold for the $R$-reduced operations in $\s{F}$. Since $\s{F}$ is a $N$ operad, Proposition \ref{prop:idquot} implies that $\s{O} * \s{P} \cong \s{F}/\la E \ra = \s{F}/ \la R \ra$ is isomorphic to the sub-symmetric sequence of $R$-reduced operations in $\s{F}$, equipped with a modified composition operation.
\end{proof}

Lastly, we consider tensor products of free operads.

\begin{lem}\label{lem:freeopten} Suppose $S$ and $T$ are $\Sigma$-free symmetric sequences in $\b{Set}^G$ such that the sets $S(n)^G$ and $T(n)^G$ are nonempty for all $n \geq 0$. Then the tensor product $F(S) \otimes F(T)$ is isomorphic to a sub-symmetric sequence of $F(S \sqcup T)$, equipped with a modified composition operation.
\end{lem}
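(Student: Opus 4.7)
The plan is to apply Propositions \ref{prop:idquot} and \ref{prop:idquotcrit} to the free operad $\s{F} = F(S \sqcup T)$, which is a $N$ operad because $S \sqcup T$ is $\Sigma$-free and has nonempty $G$-fixed points in every arity. Since $F : \b{Sym}^G \to \b{Op}^G$ is a left adjoint, there is a canonical isomorphism $F(S) * F(T) \cong \s{F}$, so that $F(S) \otimes F(T)$ is the quotient of $\s{F}$ by the congruence $\la E \ra$ generated by the interchange relation
\[
E : \quad \gamma(h;f,\dots,f) \,\, \sim \,\, \gamma(f;h,\dots,h)\sigma ,
\]
where $h$ ranges over $F(S)$ of arity $m$, $f$ ranges over $F(T)$ of arity $n$, and $\sigma$ is the reverse-lex permutation of $[mn]$. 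I would direct this into a rewriting rule $R$ by declaring $t \, R \, t'$ whenever $t'$ is obtained from $t$ by replacing an occurrence of the left side of $E$ by the corresponding right side in the tree presentation of $\s{F}$ from \S\ref{subsec:freeop}, and closing under the $G \times \Sigma$-action.

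For the complexity function, let $c(t)$ be the number of ordered pairs $(u,v)$ of internal nodes in the tree representing $t \in \s{F}$ such that $u$ is an $S$-node, $v$ is a $T$-node, and $u$ is a proper ancestor of $v$. The central calculation is that a single $R$-step strictly decreases $c$. Splitting the $S$-$T$ ancestor pairs into contributions from (A) both nodes inside the rewritten pattern, (B) an outside-above $S$-ancestor with an inside $T$-descendant, (C) an inside $S$-ancestor with an outside-below $T$-descendant, and (D) both nodes outside, one verifies that (A) drops from $\abs{f} A_h$ to $0$, where $A_h = \sum_{u \in h} \#\{\text{leaves of $h$ below $u$}\} \geq \abs{h}$; (B) drops from $\alpha_S \cdot m\abs{f}$ to $\alpha_S \cdot \abs{f}$, where $\alpha_S$ is the number of external $S$-ancestors of the pattern; and both (C) and (D) are unchanged. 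The invariance of (C) relies on the observation that on both sides of the interchange, each external leaf $x_{ij}$ of the pattern has exactly $a_i$ internal $S$-ancestors and $b_j$ internal $T$-ancestors, where $a_i$ counts the $S$-nodes on the root-to-$i$-th-leaf path in $h$ and $b_j$ counts the $T$-nodes on the corresponding path in $f$. Altogether $\Delta c \leq -\abs{f}\abs{h} \leq -1$ whenever the interchange is nontrivial.

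Conditions (ii)--(iv) of Proposition \ref{prop:idquotcrit} are immediate: equivariance is built into the definition of $R$, while (iii) and (iv) are stability under grafting above or below, and a matched subtree remains matched under such grafting. The main technical obstacle is condition (i), the confluence of $R$. Given two patterns $P_1 = \gamma(h_1;f_1,\dots,f_1)$ and $P_2 = \gamma(h_2;f_2,\dots,f_2)$ in $t$, they either occupy disjoint subtrees (in which case the two rewrites commute on the nose) or they overlap. Since the pure-$S$ portion of either pattern cannot contain $T$-nodes and vice versa, any nontrivial overlap must be of the ``staircase'' type: $h_2$ is a subtree of $h_1$ whose leaves form a subset of the leaves of $h_1$, and $f_2$ is a top-portion of $f_1$ (so $f_1 = \gamma(f_2; g_1,\dots,g_{n_2})$ for pure $T$-trees $g_i$). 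In each such case I would verify the diamond property directly by decomposing both rewrite results via operadic associativity and showing that a finite sequence of further $R$-moves brings them to a common reduct; a representative example is $\alpha(\beta(f(x_1,x_2)),\beta(f(x_3,x_4)))$ with $\alpha \in S(2)$, $\beta \in S(1)$, and $f \in T(2)$, where the outer $R$-move yields in one step the same normal form as the two inner $R$-moves followed by a new outer $R$-move extracted as a factor via associativity. Once conditions (i)--(iv) are established, Propositions \ref{prop:idquotcrit} and \ref{prop:idquot} identify $F(S) \otimes F(T) = \s{F}/\la E \ra$ with the sub-symmetric sequence $r\s{F} \subset U\s{F}$ of $R$-reduced operations, equipped with the modified composition.
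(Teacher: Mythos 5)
Your proposal gets the broad outline right (use the free operad model, set up a complexity-reducing rewrite, invoke Propositions \ref{prop:idquot} and \ref{prop:idquotcrit}), and your ancestor-pair complexity is a clever invariant for the positive-arity interchange. But the argument has a genuine gap: the \emph{nullary} interchanges are not handled, and in fact they break both termination and uniqueness of normal forms for the rewriting system you define.

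Concretely, take $h \in S(0)$ and $f \in T(n)$ with $n > 0$ (both exist by the fixed-point hypothesis on $S$, $T$). The interchange relation $E$ then reads $h \sim \gamma(f; h, \dots, h)$, and your rule $R$ rewrites $h \rightsquigarrow \gamma(f;h,\dots,h)$. This rewrite \emph{enlarges} the tree, and your complexity function does not decrease: on both sides the $S$-nodes are never proper ancestors of any $T$-node, so the ancestor-pair count is unchanged. (Your estimate ``$\Delta c \le -\abs{f}\abs{h}$'' is vacuous when $\abs{h}=0$.) Iterating this substitution at the newly-created leaves of $h$ yields arbitrarily long $R$-chains, so reduced forms need not exist. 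Moreover, if $T(0)$ contains two distinct elements $e_1'', e_2''$, then taking $h = e \in S(0)$ and $f = e_i''$ gives $e \, R \, e_1''$ and $e \, R \, e_2''$, and both $e_1''$, $e_2''$ are $R$-reduced (nothing in $F(T)$ matches the left-hand side of $E$); so condition (i) fails outright, and even if you forced termination, you would get two distinct ``reduced'' representatives of the same congruence class.

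This is precisely the issue the paper flags at the top of its proof and engineers around: it fixes a single $z \in T(0)^G$, restricts the interchange rewrite (a) to $m,n>0$, introduces collapsing moves (c), (d) that push every nullary subtree down to $z()$, adds partially-nullarized interchanges (b), and uses a complexity function (number of non-$z$ operation symbols plus depth-weighted arities of $Y$-symbols) that strictly decreases under \emph{all} of these moves. To repair your argument you would need something equivalent: either adopt a similar collapse-to-$z$ device and a compatible complexity, or otherwise explicitly show that your rewrite still satisfies (1)--(3) of \S\ref{subsec:solveword} in the presence of nullary operations --- which, as it stands, it does not. (Separately, your verification of condition (i) is only sketched on one ``representative example''; that would need to be carried out for all overlap types, but this is a lesser concern than the nullary problem, which is fatal to the stated approach.)
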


\begin{proof} The tensor product $F(S) \otimes F(T)$ is the coproduct $F(S) * F(T) \cong F(S \sqcup T)$ modulo the congruence relation $\sim$ generated by the set of interchange relations. We shall analyze it using a different set of relations to avoid issues that arise from nullary interchanges. Let $X$ and $Y$ be sets of $\Sigma$-orbit representatives for $S$ and $T$, and choose an element $z \in T(0)^G \subset Y(0)$. As in \S\ref{subsec:freeop}, we model $F(S \sqcup T)$ as a collection of formal composites of operations in $X \sqcup Y$. Given two elements $t , t' \in F(S \sqcup T)$, declare $tRt'$ if we can obtain $t'$ from $t$ by replacing a subterm of $t$ in one of the following ways: 
	\begin{enumerate}[label=(\alph*)]
		\item{}{ $h(f(t_{11},\dots,t_{1n}),\dots, f(t_{m1},\dots,t_{mn})) \rightsquigarrow 
				f(h(t_{11},\dots,t_{m1}),\dots,h(t_{1n},\dots,t_{mn}))$} for $h \in X(m)$ and $f \in Y(n)$ with $m,n > 0$, or
		\item{}one of the substitutions below, for $h \in X(m)$ and $f \in Y(n)$ with $m,n > 0$:
		{\small
		\[
		h(z(),f(t_{21},\dots,t_{2n}),\dots, f(t_{m1},\dots,t_{mn})) \rightsquigarrow f(h(z(),t_{21},\dots,t_{m1}),\dots,h(z(),t_{2n},\dots,t_{mn}))
		\]
		}
		and similarly for $z()$'s in the $2$nd through $m$th positions, or
		{\small
		\[
		h(z(),z(),f(t_{31},\dots,t_{3n}),\dots, f(t_{m1},\dots,t_{mn})) \rightsquigarrow f(h(z(),z(),t_{31},\dots,t_{m1}),\dots,h(z(),z(),t_{3n},\dots,t_{mn}))
		\]
		}
		and similarly for pairs of $z()$'s in any other positions, or
		\[
		\vdots
		\]
		{\small
		\[
		h(z(),\dots,z(),f(t_{m1},\dots,t_{mn})) \rightsquigarrow f(h(z(),\dots,z(),t_{m1}),\dots,h(z(),\dots,z(),t_{mn}))
		\]
		}
		and similarly for $m-1$ copies of $z()$ in any other positions, or
		\item{}$\ell (z(),z(),\dots,z()) \rightsquigarrow z()$ for $\ell \in X(n) \sqcup Y(n)$ and $n > 0$, or
		\item{}$e() \rightsquigarrow z()$ for $e \in [ X(0) \sqcup Y(0) ] \setminus \{z\}$.
	\end{enumerate}
As before, the $t_i$ are not necessarily operadic terms. The congruence relation $\sim$ contains all (a)-substitutions by design. Then, since the operad $F(S) \otimes F(T) \cong F(S \sqcup T)/\!\sim$ is reduced, it follows that $\sim$ also contains all (c) and (d)-substitutions. The (b)-substitutions are generated by (a) and (c)-substitutions. Therefore $\la R \ra \subset \,\, \sim$. To establish the other inclusion, it is enough to show that $\la R \ra$ contains all interchange relations
	\[
	h(f(x_{11},\dots,x_{1n}),\dots,f(x_{m1},\dots,x_{mn}))  \sim  f(h(x_{11},\dots,x_{m1}) , \dots , h(x_{1n},\dots,x_{mn})),
	\]
where $h \in X$ and $f \in Y$ are possibly nullary. If $h$ and $f$ have positive arity, this is an (a)-substitution. If either $h$ or $f$ is nullary, then both sides of the relation are nullary operations in $F(S \sqcup T)$. The operad $F(S \sqcup T)/\la R \ra$ is reduced, because (c) and (d)-substitutions can reduce any nullary operation to $z()$. Therefore all nullary interchange relations are also contained in $\la R \ra$, and therefore $\sim \,\, \subset \la R \ra$.

For any $t \in F(S \sqcup T)$ and operation symbol $f$ in $t$, define the \emph{depth} $d(f)$ of $f$ to be the number of nested pairs of parentheses that contain $f$. For example, the $f$ in $f(x_1,\dots,x_n)$ has depth $0$, while the $f$ in $h(f(x_1,x_2),k(x_3,x_4))$ has depth $1$. We define the complexity of a term $t$ by
	\[
	c(t) = \#\Bigg(
	\begin{array}{c}
	\t{operation symbols in $t$} \\
	\t{not equal to $z$}
	\end{array}
	\Bigg)
	\quad\quad+\!\!
	\sum_{\tiny \begin{array}{c}
	\t{$Y$-operation}\\
	\t{symbols $f$ in $t$}
	\end{array}}
	\!\!
	d(f) \cdot \abs{f} ,
	\]
where $\abs{f}$ denotes the arity of $f$. The relation $R$ reduces this complexity function. In (a) and (b)-substitutions, the right summand decreases by at least $\abs{f}$, but the left summand cannot increase by more than $\abs{f} - 1$. In (c) and (d)-substitutions, the left summand decreases and the right summand does not increase.

The relation $R$ satisfies conditions (ii), (iii), and (iv) of Proposition \ref{prop:idquotcrit} by the same arguments used in Example-Lemma \ref{exlem:Nopcop}. To verify condition (i), we consider the possible interactions between substitutions. Suppose $t R s$ and $t R s'$. In almost all cases, the substitutions that yield $s$ and $s'$ must either be equal or disjoint, in which case $s = s'$ or there is a term $r$ such that $sRr$ and $s'Rr$ via the complementary substitutions. The only interesting scenario is when $s$ is obtained by an (a) or (b)-substitution, $s'$ is obtained by a (c)-substitution, and the subterm $\ell(z(),\dots,z())$ for the (c)-substitution is equal to one of the $f(t_{i1},\dots,t_{in})$ blocks in the (a) or (b)-substitution. Suppose for simplicity that $s$ is obtained by an (a)-substitution, and the block $f(t_{11},\dots,t_{1n})$ equals $\ell(z(),\dots,z())$. Then $s' R s$ via the very first (b)-substitution. The same reasoning applies when $\ell(z(),\dots,z())$ is another block, or if $s$ is obtained by a (b)-substitution that contains at least two $f(t_{i1},\dots,t_{in})$-blocks on the left side. If $s$ is obtained from a (b)-substitution with only one $f(t_{i1},\dots,t_{in})$-block, then collapsing the entire subterm down to $z()$ yields a term $r$ such that $s' R r$ and $s R \cdots R r$ via (c)-substitutions. Therefore $R$ satisfies condition (i), and Propositions \ref{prop:idquotcrit} and \ref{prop:idquot} identify the tensor product $F(S) \otimes F(T) \cong F(S \sqcup T)/ \!\sim \,\, = F( S \sqcup T)/\la R \ra$ with the sub-symmetric sequence of $R$-reduced operations in $F(S \sqcup T)$, equipped with a modified composition operation.
\end{proof}


\begin{thebibliography}{99}

\bibitem{AR}
J. Ad\'{a}mek and J. Rosick\'{y}.
Locally presentable and accessible categories.
London Mathematical Society Lecture Note Series, 189. Cambridge University Press, Cambridge, 1994. xiv+316 pp. ISBN: 0-521-42261-2

\bibitem{BBR}
S. Balchin, D. Barnes, and C. Roitzheim.
$N_\infty$-operads and associahedra.
Preprint. arXiv:1905.03797.

\bibitem{BergMoerd}
C. Berger and I. Moerdijk.
Axiomatic homotopy theory for operads.
Comment. Math. Helv. 78 (2003), no. 4, 805--831.

\bibitem{BonPer}
P. Bonventre and L.A. Pereira.
Genuine equivariant operads.
Preprint. arXiv:1707.02226.

\bibitem{BH}
A.J. Blumberg and M.A. Hill.
Operadic multiplications in equivariant spectra, norms, and transfers.
Adv. Math. 285 (2015), 658--708.

\bibitem{BHinc}
A.J. Blumberg and M.A. Hill.
Incomplete Tambara functors.
Algebr. Geom. Topol. 18 (2018), no. 2, 723--766.

\bibitem{BHOspec}
A.J. Blumberg and M.A. Hill.
Equivariant stable categories for incomplete systems of transfers.
Preprint. arXiv:1909.04732.

\bibitem{DHKS}
W.G. Dwyer, P.S. Hirschhorn, D.M. Kan, and J.H. Smith.
Homotopy limit functors on model categories and homotopical categories.
Mathematical Surveys and Monographs, 113. American Mathematical Society, Providence, RI, 2004. viii+181 pp. ISBN: 0-8218-3703-6.

\bibitem{Dunn}
G. Dunn.
Tensor product of operads and iterated loop spaces.
J. Pure Appl. Algebra 50 (1988), no. 3, 237--258.

\bibitem{FiedVogt}
Z. Fiedorowicz and R.M. Vogt.
An additivity theorem for the interchange of $E_n$ structures.
Adv. Math. 273 (2015), 421--484.

\bibitem{GM}
B.J. Guillou and J.P. May.
Equivariant iterated loop space theory and permutative $G$-categories.
Algebr. Geom. Topol. 17 (2017), no. 6, 3259--3339.

\bibitem{GutWhite}
J.J. Guti\'{e}rrez and D. White.
Encoding equivariant commutativity via operads.
Algebr. Geom. Topol. 18 (2018), no. 5, 2919--2962.

\bibitem{HH}
M.A. Hill and M.J. Hopkins.
Equivariant symmetric monoidal structures.
Preprint. arXiv:1610.03114.

\bibitem{MayGILS}
J.P. May.
The geometry of iterated loop spaces.
Lectures Notes in Mathematics, Vol. 271. Springer-Verlag, Berlin-New York, 1972. viii+175 pp.

\bibitem{RubComb}
J. Rubin.
Combinatorial $N_\infty$ operads.
Preprint. arXiv:1705.03585.

\bibitem{RubChar}
J. Rubin.
Characterizations of equivariant Steiner and linear isometries operads.
Preprint. arXiv:1903.08723.

\bibitem{White}
D. White.
Monoidal Bousfield Localizations and Algebras over Operads.
Preprint. arXiv:1404.5197.

\end{thebibliography}
\end{document}